\title{Increasing the smoothness of vector and Hermite subdivision schemes}
\author{%
{\sc
Caroline Moosm{\"u}ller}\thanks{Corresponding author. Chair of Digital Image Processing, University of Passau,
Innstra{\ss}e 43, 94032 Passau, Germany. C.M.\ is now with Johns Hopkins University. \texttt{cmoosmueller@jhu.edu}}\;,
{\sc Nira Dyn}\thanks{School of Mathematical Sciences, Tel-Aviv University,
Tel-Aviv 69978, Israel. \texttt{niradyn@post.tau.ac.il}}
}
\date{}
\theoremstyle{plain}
\newtheorem{theorem}{Theorem}
\newtheorem{lemma}[theorem]{Lemma}
\newtheorem{corollary}[theorem]{Corollary}
\newtheorem{alg}[theorem]{Procedure}
\theoremstyle{definition}
\newtheorem{definition}[theorem]{Definition}
\newtheorem{example}[theorem]{Example}
\newtheorem{remark}[theorem]{Remark}
\newcommand{\A}{\mathbf{A}}
\newcommand{\B}{\mathbf{B}}
\newcommand{\C}{\mathbf{C}}
\newcommand{\D}{\Delta}
\newcommand{\ab}{\boldsymbol{\alpha}}
\newcommand{\bb}{\boldsymbol{\beta}}
\newcommand{\bc}{\mathbf{c}}
\newcommand{\m}{\mathbb{Z}}
\newcommand*\Bell{\ensuremath{\boldsymbol\ell}}
\newcommand{\vs}{VSS}
\newcommand{\hs}{HSS}
\newcommand{\Spvek}[2][r]{%
  \gdef\@VORNE{1}
  \left(\hskip-\arraycolsep%
    \begin{array}{#1}\vekSp@lten{#2}\end{array}%
  \hskip-\arraycolsep\right)}
\def\vekSp@lten#1{\xvekSp@lten#1;vekL@stLine;}
\def\vekL@stLine{vekL@stLine}
\def\xvekSp@lten#1;{\def\temp{#1}%
  \ifx\temp\vekL@stLine
  \else
    \ifnum\@VORNE=1\gdef\@VORNE{0}
    \else\@arraycr\fi%
    #1%
    \expandafter\xvekSp@lten
  \fi}
\begin{document}
\maketitle
\begin{abstract}
 
In this paper we suggest a method for transforming a vector subdivision scheme generating $C^{\ell}$ limits to another such scheme of the same dimension, generating $C^{\ell+1}$ limits.
In scalar subdivision, it is well known that
a scheme generating $C^{\ell}$ limit curves can be transformed to a new scheme producing $C^{\ell+1}$ limit curves by
multiplying the scheme's symbol with the \emph{smoothing factor} $\tfrac{z+1}{2}$.
We extend this approach to vector and Hermite subdivision schemes, by manipulating symbols.
The algorithms presented in this paper allow to construct vector (Hermite) subdivision schemes of arbitrarily high regularity from a convergent vector scheme (from a Hermite scheme whose Taylor scheme is convergent with limit functions of vanishing first component).

\textit{Keywords:} Vector subdivision schemes; Hermite subdivision schemes; symbol of a subdivision scheme; smoothness;
analysis of limit functions
\end{abstract}

\section{Introduction}

Subdivision schemes are algorithms which iteratively refine discrete input data and produce smooth curves or surfaces in the limit. The regularity of the limit curve resp.\ surface is a topic of high interest.

In this paper we are concerned with the \emph{stationary} and \emph{univariate} case, i.e.\ with
subdivision schemes using the same set of coefficients (called \emph{mask}) in every refinement step and which have curves as limits. We study two types of such schemes: \emph{vector} and \emph{Hermite} subdivision schemes.

The mostly studied schemes are scalar subdivision schemes with real-valued sequences as masks. These schemes are in fact a special case of vector subdivision, with matrix-valued sequences as masks which refine sequences of vectors. For vector subdivision schemes many results concerning convergence and smoothness are available.
An incomplete list of references is \citet{cavaretta91,charina05,dyn92,dyn91,dyn02,micchelli98,sauer02}.

In Hermite subdivision the refined data
is also a sequence of vectors interpreted as function and derivatives values. This results in level-dependent vector subdivision, where the convergence of a scheme already includes the regularity of the limit curve. Corresponding literature can be found in \citet{dubuc06,dubuc05,dyn95,dyn99,guglielmi11,han05,merrien12}
and references therein. Note that here we consider \emph{inherently stationary} Hermite schemes \citep{conti14}, where the level-dependence arises only from the specific interpretation of the input data. Inherently non-stationary Hermite schemes are discussed e.g.\ in \citet{conti16}.

The convergence and smoothness analysis of subdivision schemes is strongly connected to the existence of the \emph{derived scheme} or in the Hermite case to the \emph{Taylor scheme}. The derived scheme (the Taylor scheme) are obtained by an appropriate factorization of the symbols \citep{dyn02,charina05} (\citet{merrien12}).
In the scalar and vector case we have the following result: If the derived scheme produces $C^{\ell}\enspace (\ell \geq 0)$ limit curves, then the original scheme produces $C^{\ell+1}$ limit curves, see \citet{dyn02,charina05}. In the Hermite case, in addition to the assumption that the Taylor scheme is $C^{\ell}$,
we also need that its limit functions have vanishing first component \citep{merrien12}.
These results are an essential tool in our approach for obtaining schemes with increased smoothness.

We start from a scheme which is known to have a certain regularity as the derived scheme (the Taylor scheme) of a new, to be computed scheme. By the above result, the regularity of the new scheme is increased by $1$.
This idea comes from univariate scalar subdivision, where it is well known that a scheme with symbol $\boldsymbol{\alpha}^{\ast}(z)$ is the derived scheme of $\boldsymbol{\beta}^{\ast}(z)=\tfrac{1+z}{2}z^{-1}\boldsymbol{\alpha}^{\ast}(z)$ \citep{dyn02}, and thus if $S_{\boldsymbol{\alpha}}$ generates $C^{\ell}$ limits,
$S_{\boldsymbol{\beta}}$ generates limits which are $C^{\ell+1}$.

It is possible to generalize this process to obtain vector (Hermite) subdivision schemes of arbitrarily high smoothness from a convergent vector scheme (a Hermite scheme, whose Taylor scheme is convergent with limit functions of vanishing first component).

We would like to mention other approaches which increase the regularity of subdivision schemes: It is known that the de Rham transform \citep{dubuc08} of some Hermite schemes increases the regularity by $1$, see \citet{conti14}. In contrast to our approach,
it is not clear if this procedure can be iterated to obtain schemes of higher regularity. 
Nevertheless, in the examples listed in \citet{conti14}, the de Rham approach increases the support only by $1$, whereas our procedure for increasing the smoothness has the drawback of producing Hermite schemes with large supports (see \Cref{supportlength}, \Cref{example} and \Cref{derham}).
Also, the authors of \citet{dubuc08} use geometric ideas, such as corner cutting. Our approach, on the other hand, is of an algebraic nature as it manipulates symbols.

A recent result which increases the regularity of a Hermite scheme, but not vector schemes, is presented in \citet{merrien16}. This is different from our approach, as it also increases the dimension of the matrices of the mask and the dimension of the refined data.

We would also like to mention the paper \citet{sauer03}, which gives a detailed discussion of how to generalize the procedure for increasing the smoothness of a scalar subdivision scheme from the univariate to the multivariate scalar setting. Although vector subdivision schemes appear naturally
in the analysis of smoothness of multivariate scalar schemes, yet the aim in \citet{sauer03} is to increase the smoothness of scalar schemes.

Our paper is organized as follows. In \Cref{background} we introduce the notation used throughout this text and recall some definitions concerning subdivision schemes. \Cref{sec:scalar} presents the well known procedure for increasing the smoothness of univariate scalar subdivision schemes \citep{dyn02}.
However, we introduce new notation, to emphasize the analogy to the procedures we presented in
\Cref{sec:vector,sec:hermite} for vector and Hermite schemes, respectively. We conclude by two examples, applying our procedure to an interpolatory Hermite scheme of \citet{merrien92} and to a Hermite scheme of de Rham-type \citep{dubuc08}, and obtain schemes with limit curves of regularity $C^2$ and $C^3$, respectively.

\section{Notation and background}
\label{background}

In this section we introduce the notation which is used throughout this paper and recall some known facts about scalar, vector and Hermite subdivision schemes.

Vectors in $\mathbb{R}^p$ will be labeled by lowercase letters $c$. The standard basis is denoted by $e_1,\ldots,e_p$.
Sequences of elements in $\mathbb{R}^p$ are denoted by boldface letters $\bc=\{c_i \in \mathbb{R}^p: i \in \m \}$. The space of all such sequences is $\ell(\mathbb{R}^p)$.

We define a \emph{subdivision operator} $S_{\ab}: \ell(\mathbb{R}^p) \to \ell(\mathbb{R}^p)$ with a scalar mask $\ab \in \ell(\mathbb{R})$ by
\begin{equation}\label{scalaroperator}
(S_{\ab}\bc)_i=\sum_{j \in \m} \alpha_{i-2j}c_j, \quad i \in \m, \: \bc \in \ell(\mathbb{R}^p).
\end{equation}
We study the case of finitely supported masks, with support contained in $[-N,N]$. In this case the sum in \cref{scalaroperator} is finite and the scheme is local.

We also consider matrix-valued masks. To distinguish them from the scalar case, we denote
matrices in $\mathbb{R}^{p \times p}$ by uppercase letters. Sequences of matrices are denoted by boldface letters $\A=\{A_i \in \mathbb{R}^{p \times p}: i \in \m \}$. 

We define a \emph{vector subdivision operator} $S_{\A}: \ell(\mathbb{R}^p) \to \ell(\mathbb{R}^p)$ with a finitely supported matrix mask $\A \in \ell(\mathbb{R}^{p \times p})$ by
\begin{equation}\label{matrixoperator}
(S_{\A}\bc)_i=\sum_{j \in \m} A_{i-2j}c_j, \quad i \in \m, \: \bc \in \ell(\mathbb{R}^p).
\end{equation} 
We define three kinds of subdivision schemes:
\begin{definition}

\
\begin{enumerate}
\item A \emph{scalar subdivision scheme} is the procedure of constructing $\bc^n\: (n\geq 1)$ from input data $\bc^0 \in \ell(\mathbb{R}^p)$ by the rule $\bc^n=S_{\ab}\bc^{n-1}$, where $\ab \in \ell(\mathbb{R})$ is a scalar mask.
\item A \emph{vector subdivision scheme} (\vs) is the procedure of constructing $\bc^n\: (n\geq 1)$ from input data $\bc^0 \in \ell(\mathbb{R}^p)$ by the rule $\bc^n=S_{\A}\bc^{n-1}$, where $\A$ is a matrix-valued mask.
\item A \emph{Hermite subdivision scheme} (\hs) is the procedure of constructing $\bc^n\: (n\geq 1)$ from $\bc^0\in \ell(\mathbb{R}^p)$ by the rule $D^n\bc^n=S_{\A}D^{n-1}\bc^{n-1}$, where $\A$ is a matrix-valued mask and $D$ is the dilation matrix
\begin{equation*}
D=\begin{pmatrix} 1 & & & \\  & \frac{1}{2} & & \\ & & \ddots & \\& & & \frac{1}{2^{p-1}}  \end{pmatrix}.
\end{equation*}
\end{enumerate}
\end{definition}
The difference between scalar and vector subdivision lies in the dimension of the mask. In scalar subdivision the components of $\bc$ are refined \emph{independently} of each other. This is not the case in vector subdivision. Note also that scalar schemes are a special case of vector schemes with mask $A_i=\alpha_iI_{p}$, where $I_p$ is the $(p \times p)$ unit matrix.
In Hermite subdivision, on the other hand, the components of $\bc$ are interpreted as function and derivatives values up to order $p-1$. This is represented by the matrix $D$. In particular, Hermite subdivision is a level-dependent case of vector subdivision:
$\bc^n=S_{\hat{\A}_n}\bc^{n-1}$ with $\hat{\A}_n=\{D^{-n}A_iD^{n-1}:i \in\m\}$.

On the space $\ell(\mathbb{R}^p)$ we define a norm by
\begin{equation*}
\|\bc\|_{\infty}=\sup_{i \in \m}\|c_i\|,
\end{equation*}
where $\|\cdot\|$ is a norm on $\mathbb{R}^p$. The Banach space of all bounded sequences is denoted by $\ell^{\infty}(\mathbb{R}^p)$.
A subdivision operater $S_{\ab}$ with finitely supported mask, restricted to a map $\ell^{\infty}(\mathbb{R}^p) \to \ell^{\infty}(\mathbb{R}^p)$ has an induced operator norm:
\begin{equation*}
\|S_{\ab}\|_{\infty}=\sup\{\|S_{\ab}\bc\|_{\infty}: \bc \in \ell^{\infty}(\mathbb{R}^p) \text{ and } \|\bc\|_{\infty}=1\}.
\end{equation*}
This is also true for subdivision operators with matrix masks.

Next we define convergence of scalar, vector and Hermite subdivision schemes. We start with scalar and vector schemes:
\begin{definition}
A scalar (vector) subdivision scheme associated with the mask $\ab$ ($\A$) is \emph{convergent in $\ell^{\infty}(\mathbb{R}^p)$}, also called $C^0$, if for all input data $\bc^0 \in \ell^{\infty}(\mathbb{R}^p)$ there exists a function $\Psi \in C(\mathbb{R},\mathbb{R}^p)$, such that the sequences $\bc^n=S_{\ab}^n\bc^0$ ($\bc^n=S_{\A}^n\bc^0$) satisfy
\begin{equation*}
\sup_{i\in \m}\|c^n_i-\Psi(\tfrac{i}{2^n})\| \to 0, \quad \text{as } n \to \infty,
\end{equation*}
and $\Psi\neq 0$ for some $\bc^0 \in \ell^{\infty}(\mathbb{R}^p)$.
We say that the scheme is $C^{\ell}$, if in addition $\Psi$ is $\ell$-times continuously differentiable for any initial data.
\end{definition}
In \Cref{sec:hermite} we consider {\hs}s which refine function and first derivative values. 
The case of point-tangent data is treated componentwise. With this approach it is sufficient to consider convergence for data in $\ell(\mathbb{R}^2)$.

In order to distinguish between the convergence of {\vs}s and the convergence of {\hs}s, we use the notation introduced in \citet{conti14}:
\begin{definition}\label{hermiteconv}
A {\hs} associated with the mask $\A$ is said to be \emph{$HC^{\ell}$ convergent} with $\ell\geq 1$, if for any input data $\bc^0 \in \ell^{\infty}(\mathbb{R}^2)$, there exists a function $\Psi={\Psi^0 \choose \Psi^1}$ with $\Psi^0 \in C^{\ell}(\mathbb{R},\mathbb{R})$
and $\Psi^1$ being the derivative of $\Psi^0,$ such that the sequences $\bc^n=D^{-n}S_{\A}^n\bc^{0}$, $n \geq 1$, satisfy
\begin{equation*}
\sup_{i \in \m}\|c^n_i-\Psi(\tfrac{i}{2^n})\| \to 0, \quad \text{as } n\to \infty.
\end{equation*}
\end{definition}
Note that in contrast to the vector case, a {\hs} is convergent only if the limit already possesses a certain degree of smoothness.

We conclude by recalling some facts about the generating function of a sequence $\bc$, which is the formal Laurent series
\begin{equation*}
\bc^{\ast}(z)=\sum_{i \in \m}c_iz^i.
\end{equation*}
The generating function of a mask of a subdivision scheme is called the symbol of the scheme.
It is easy to see (e.g.\ in \citet{dyn02}) that $\bc^{\ast}(z)$ has the following properties:
\begin{lemma}\label{lem:maskproperties}
Let $\bc$ be a sequence and let $\ab$ be a scalar or a matrix mask. By $\D$ we denote the forward-difference operator $(\D\bc)_i=c_{i+1}-c_{i}$. Then we have:
\begin{equation*}
(\D\bc)^{\ast}(z)=(z^{-1}-1)\bc^{\ast}(z) \text{ and } (S_{\ab}\bc)^{\ast}(z)=\ab^{\ast}(z)\bc^{\ast}(z^2).
\end{equation*}
Furthermore, for finite sequences we have the equalities
\begin{align*}
&\bc^{\ast}(1)=\sum_{i \in \m}c_{2i}+\sum_{i \in \m}c_{2i+1} \quad \text{and} \quad
\bc^{\ast}(-1)=\sum_{i \in \m}c_{2i}-\sum_{i \in \m}c_{2i+1},\\
&{\bc^{\ast}}'(1)=\sum_{i \in \m}c_{2i}(2i)+\sum_{i \in \m}c_{2i+1}(2i+1) \quad \text{and} \quad
{\bc^{\ast}}'(-1)=\sum_{i \in \m}c_{2i+1}(2i+1)-\sum_{i \in \m}c_{2i}(2i).
\end{align*}
\end{lemma}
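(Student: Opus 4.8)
The statement collects four elementary identities, and the plan is to verify each directly from the definition of the generating function: the identities involving $\D$ and $S_{\ab}$ as equalities of formal Laurent series, and the evaluation identities as equalities of genuine finite sums.

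First I would treat the difference formula: expanding $(\D\bc)^{\ast}(z)=\sum_{i\in\m}(c_{i+1}-c_i)z^i$ and reindexing the first sum via $i\mapsto i-1$ gives $\sum_{i}c_iz^{i-1}-\sum_i c_iz^i=(z^{-1}-1)\bc^{\ast}(z)$, the reindexing being legitimate at the level of formal series. For the convolution identity I would start from $(S_{\ab}\bc)^{\ast}(z)=\sum_{i\in\m}\big(\sum_{j\in\m}\alpha_{i-2j}c_j\big)z^i$ and substitute $k=i-2j$, so the exponent $i$ becomes $k+2j$; since $\ab$ is finitely supported the inner sum is finite for each $i$, so the double sum factors as $\big(\sum_k\alpha_kz^k\big)\big(\sum_j c_jz^{2j}\big)=\ab^{\ast}(z)\bc^{\ast}(z^2)$. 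The same computation applies verbatim when $\ab$ is matrix-valued; non-commutativity is irrelevant here because $\alpha_{i-2j}$ remains to the left of $c_j$ throughout.

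For the four evaluation identities I would use that a finitely supported $\bc$ has $\bc^{\ast}(z)$ a Laurent polynomial, which may be evaluated and differentiated term by term. Splitting $\m$ into even and odd indices, $\bc^{\ast}(z)=\sum_i c_{2i}z^{2i}+\sum_i c_{2i+1}z^{2i+1}$; substituting $z=1$ and $z=-1$ and using $(-1)^{2i}=1$, $(-1)^{2i+1}=-1$ yields both formulas for $\bc^{\ast}(\pm1)$. Differentiating the split expression gives ${\bc^{\ast}}'(z)=\sum_i(2i)c_{2i}z^{2i-1}+\sum_i(2i+1)c_{2i+1}z^{2i}$, and substituting $z=\pm1$ (with $(-1)^{2i-1}=-1$, $(-1)^{2i}=1$) produces the stated expressions for ${\bc^{\ast}}'(1)$ and ${\bc^{\ast}}'(-1)$. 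No step presents a real obstacle; the only point needing care is the interchange of summation in the convolution identity, which is exactly why finite support of the mask — and, for the last four identities, finiteness of $\bc$ — is assumed. All manipulations are thus finite rearrangements or identities of formal Laurent series, and the lemma follows.
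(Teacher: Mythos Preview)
Your proof is correct; each identity is verified by the obvious direct computation, and the only subtleties (formal reindexing, finite support of the mask justifying the interchange of summation) are handled properly. The paper itself does not prove this lemma---it is stated as ``easy to see'' with a reference to \citet{dyn02}---so your direct verification is exactly the intended (trivial) argument.
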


\section{Increasing the smoothness of scalar subdivision schemes}
\label{sec:scalar}

In this section we recall a procedure increasing the smoothness of scalar subdivision schemes, which is realized by the smoothing factor $\tfrac{z+1}{2}$. The results of this section are taken from Section 4 in \citet{dyn02}.
We introduce notation in order to illustrate the analogy to the procedures we present in \Cref{sec:vector} for {\vs}s.

The condition $\sum_{i \in \m}\alpha_{2i}=\sum_{i \in \m}\alpha_{2i+1}=1$ on the mask $\ab$ is necessary for the convergence of $S_{\ab}$.
In this case $\ab^{\ast}(-1)=0$, implying that $\ab^{\ast}(z)$ has a factor $(z+1)$ and there exists a mask $\partial \ab$ such that
\begin{equation}\label{scalarpartial}
\D S_{\ab}=\tfrac{1}{2}S_{\partial \ab}\D.
\end{equation}
The scalar scheme associated with $\partial \ab$ is called the \emph{derived scheme}.
It is easy to see that 
\begin{equation}\label{defderiv}
(\partial \ab)^{\ast}(z)=2z\tfrac{\ab^{\ast}(z)}{z+1}
\end{equation}
and that $(\partial \ab)^{\ast}$ is a Laurent polynomial.
The convergence and smoothness analysis of a scalar subdivision scheme associated with $\ab$ depends on the properties of $\partial \ab$:
\begin{theorem}\label{scalarsmooth} 
Let $\ab$ be a mask which satisfies $\ab^{\ast}(1)=2$ and $\ab^{\ast}(-1)=0$.
\begin{enumerate}
\item The scalar scheme associated with $\ab$ is convergent if and only if the scalar scheme associated with $\tfrac{1}{2}\partial \ab$ is contractive, namely $\|(\frac{1}{2}S_{\partial \ab})^L\|_{\infty}<1$ for some $L \in \mathbb{N}$.
\item If the scalar scheme associated with $\partial \ab$ is $C^{\ell}\enspace (\ell\geq 0)$ then the scalar subdivision scheme associated with $\ab$ is $C^{\ell+1}$.
\end{enumerate}
\end{theorem}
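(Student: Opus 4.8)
The plan is to prove both statements by reducing everything to the difference sequences $\D\bc^n$ and tracking them through the operator identity \cref{scalarpartial}. First I would establish the exact relation between iterates of $S_{\ab}$ and iterates of $S_{\partial\ab}$: iterating $\D S_{\ab}=\tfrac12 S_{\partial\ab}\D$ gives $\D\bc^n=\D S_{\ab}^n\bc^0=\bigl(\tfrac12 S_{\partial\ab}\bigr)^n\D\bc^0$. This is the bridge between the two schemes, and it is where the hypotheses $\ab^{\ast}(1)=2$, $\ab^{\ast}(-1)=0$ are used: the first guarantees $(\partial\ab)^{\ast}(1)=2$ (so $\tfrac12(\partial\ab)^{\ast}(1)=1$, the correct normalization for a scheme whose limits we contract toward $0$), and the second is exactly what makes $(\partial\ab)^{\ast}$ a Laurent polynomial via \cref{defderiv}.

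For part (1), the direction ``convergent $\Rightarrow$ contractive'' goes as follows: if $S_{\ab}$ is $C^0$ with limit $\Psi$, then $c^n_{i+1}-c^n_i\approx\Psi(\tfrac{i+1}{2^n})-\Psi(\tfrac{i}{2^n})\to 0$ uniformly in $i$, so $\|\D\bc^n\|_\infty\to 0$; since $\D\bc^n=(\tfrac12 S_{\partial\ab})^n\D\bc^0$ and one can choose $\bc^0$ making $\D\bc^0$ any prescribed finitely supported sequence (e.g.\ a single nonzero entry), a standard argument forces $\|(\tfrac12 S_{\partial\ab})^L\|_\infty<1$ for some $L$ — one shows that if no power had norm $<1$ then the iterates applied to a suitable unit sequence would not decay. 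Conversely, if $\|(\tfrac12 S_{\partial\ab})^L\|_\infty<1$, then $\|\D\bc^n\|_\infty\leq C\mu^n$ with $\mu<1$; combined with $\ab^{\ast}(1)=2$ (which makes $S_{\ab}$ reproduce constants, so the even/odd ``average'' values are stable), the telescoping estimate on $\|c^{n+1}_{2i}-c^n_i\|$ shows the piecewise-linear interpolants $\Psi_n$ of $\bc^n$ form a Cauchy sequence in $C(\mathbb{R},\mathbb{R}^p)$, whose limit is the desired $\Psi$; non-triviality ($\Psi\neq 0$ for some datum) follows from constant reproduction. I expect the main technical obstacle to be the ``only if'' direction of part (1): deducing a genuine \emph{operator} contraction from mere coordinatewise decay of differences requires the local/finite-support structure and a compactness or pigeonhole argument on the finitely many ``configurations'' a unit sequence can take — this is the one place where one cannot just push symbols around.

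For part (2), I would argue by induction on $\ell$, with part (1) essentially giving the base case $\ell=0$ (if $\partial\ab$ yields $C^0$ limits, then by part (1) applied to $\partial\ab$ — whose symbol also satisfies the two normalizations, inherited from those of $\ab$ — the scheme $\tfrac12 S_{\partial\ab}$ is contractive, hence $\|\D\bc^n\|_\infty\to 0$ geometrically, which as above yields $C^0$ convergence of $S_{\ab}$; the extra derivative comes from observing that $2^n\D\bc^n=2^n(\tfrac12 S_{\partial\ab})^n\D\bc^0=S_{\partial\ab}^n\D\bc^0$ converges to the limit function $\Phi$ of the derived scheme, and $2^n\D\bc^n$ is precisely the sequence of divided differences of $\bc^n$ at scale $2^{-n}$, so $\Phi$ is the derivative of $\Psi$). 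For the inductive step, if $\partial\ab$ produces $C^{\ell}$ limits then its own derived scheme $\partial(\partial\ab)$ produces $C^{\ell-1}$ limits (by the defining property of the derived scheme together with the induction hypothesis), and one applies the $\ell=0$ mechanism one level up: $S_{\partial\ab}\in C^{\ell}$ means $2^n\D\bc^n\to\Psi'$ with $\Psi'\in C^\ell$, hence $\Psi\in C^{\ell+1}$. The bookkeeping to check is that the normalization conditions propagate to all the iterated derived schemes, which is immediate from \cref{defderiv} evaluated at $z=1$ and $z=-1$.
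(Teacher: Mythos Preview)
The paper does not actually prove \Cref{scalarsmooth}: it is stated in \Cref{sec:scalar} as a known result, with the explicit attribution ``The results of this section are taken from Section~4 in \citet{dyn02}.'' There is therefore no in-paper proof to compare your proposal against.

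That said, your outline is essentially the classical argument from \citet{dyn02}, and the overall architecture (the iterated identity $\D\bc^n=(\tfrac12 S_{\partial\ab})^n\D\bc^0$, the Cauchy estimate on piecewise-linear interpolants via constant reproduction, and the identification $2^n\D\bc^n=S_{\partial\ab}^n\D\bc^0\to\Psi'$) is correct. One logical slip to flag: in the base case of part~(2) you write ``by part~(1) applied to $\partial\ab$ \ldots\ the scheme $\tfrac12 S_{\partial\ab}$ is contractive.'' Part~(1) applied to $\partial\ab$ would instead tell you about $\tfrac12 S_{\partial^2\ab}$, not $\tfrac12 S_{\partial\ab}$; and the normalization $(\partial\ab)^{\ast}(-1)=0$ is \emph{not} inherited algebraically from the conditions on $\ab$ (it is equivalent to $\ab^{\ast}$ having a double root at $-1$), but rather follows because it is a necessary condition for the assumed $C^0$ convergence of $S_{\partial\ab}$. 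You do not actually need contractivity of $\tfrac12 S_{\partial\ab}$ here: the assumption that $S_{\partial\ab}$ is $C^0$ already gives that $S_{\partial\ab}^n\D\bc^0$ is uniformly bounded, hence $\|\D\bc^n\|_\infty=2^{-n}\|S_{\partial\ab}^n\D\bc^0\|_\infty\to 0$ geometrically, which is all the Cauchy argument requires.
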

\Cref{scalarsmooth} allows us to define a procedure for increasing the smoothness of a scalar subdivision scheme:
For a mask $\ab$, define a new mask $\mathcal{I}\ab$ by $(\mathcal{I}\ab)^{\ast}(z)=\tfrac{(1+z)}{2}z^{-1}\ab^{\ast}(z)$. Then $(\mathcal{I}\ab)^{\ast}(-1)=0$ and from \cref{defderiv} we get $\partial(\mathcal{I}\ab)=\ab$ (Note that if $\partial \ab$ exists, then also $\mathcal{I}(\partial \ab)=\ab$).
\begin{corollary}
Let $\ab$ be a mask associated with a $C^{\ell}$ ($\ell \geq 0$) scalar subdivision scheme. Then the mask $\mathcal{I}\ab$ gives rise to a $C^{\ell+1}$ scheme.
\end{corollary}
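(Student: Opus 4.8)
The plan is to deduce this directly from \Cref{scalarsmooth}(2), applied with the mask $\mathcal{I}\ab$ playing the role of $\ab$ there. The first thing I would do is check that $\mathcal{I}\ab$ meets the hypotheses of that theorem, i.e.\ $(\mathcal{I}\ab)^{\ast}(1)=2$ and $(\mathcal{I}\ab)^{\ast}(-1)=0$. The second is read off immediately from the defining relation $(\mathcal{I}\ab)^{\ast}(z)=\tfrac{1+z}{2}z^{-1}\ab^{\ast}(z)$, which carries an explicit factor $(1+z)$. For the first, I would use that $S_{\ab}$, being $C^{\ell}$, is in particular convergent, so the necessary condition recalled at the start of \Cref{sec:scalar} gives $\ab^{\ast}(1)=2$; evaluating the defining relation at $z=1$ then yields $(\mathcal{I}\ab)^{\ast}(1)=\tfrac{1+1}{2}\cdot 1\cdot \ab^{\ast}(1)=2$.

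Next I would invoke the identity $\partial(\mathcal{I}\ab)=\ab$, which was already observed right after the statement of \Cref{scalarsmooth}: since $(\mathcal{I}\ab)^{\ast}(-1)=0$ the derived mask $\partial(\mathcal{I}\ab)$ is well defined, and substituting the definition of $\mathcal{I}\ab$ into \cref{defderiv} collapses the symbol $2z\tfrac{(\mathcal{I}\ab)^{\ast}(z)}{z+1}$ to $\ab^{\ast}(z)$.

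With these two ingredients in place the conclusion is immediate: the scalar scheme associated with $\partial(\mathcal{I}\ab)=\ab$ is $C^{\ell}$ by hypothesis, so \Cref{scalarsmooth}(2) (whose hypotheses on the ``outer'' mask $\mathcal{I}\ab$ were verified in the first step) tells us that the scalar scheme associated with $\mathcal{I}\ab$ is $C^{\ell+1}$.

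I do not expect any genuine obstacle here — this is a one-line corollary of \Cref{scalarsmooth} once the bookkeeping is done. The only point that requires a moment's care is making sure the normalization $(\mathcal{I}\ab)^{\ast}(1)=2$ holds, which is why one needs the (standard) fact that convergence of $S_{\ab}$ already forces $\ab^{\ast}(1)=2$; everything else is a direct reading of the symbol identities.
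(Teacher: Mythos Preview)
Your proposal is correct and follows exactly the approach the paper takes: the corollary is stated immediately after the observation that $(\mathcal{I}\ab)^{\ast}(-1)=0$ and $\partial(\mathcal{I}\ab)=\ab$, and is meant to be read as a direct application of \Cref{scalarsmooth}(2). If anything you are slightly more careful than the paper, which does not explicitly verify the normalization $(\mathcal{I}\ab)^{\ast}(1)=2$ needed for the hypotheses of \Cref{scalarsmooth}.
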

Therefore, by a repeated application of $\mathcal{I}$, a scalar subdivision scheme which is at least convergent, can be transformed to a new scheme of arbitrarily high regularity.
We call $\mathcal{I}$ a \emph{smoothing operator} and $\tfrac{z+1}{2}$ a \emph{smoothing factor}. Note that the factor $z^{-1}$ in $\mathcal{I}$ is an index shift.
\begin{example}[B-Spline schemes]
The symbol of the scheme generating B-Spline curves of degree $\ell \geq 1$ and smoothness $C^{\ell-1}$ is
$$\ab_{\ell}^{\ast}(z)=\Big(\tfrac{(z+1)}{2}z^{-1}\Big)^{\ell}(z+1).$$
Obviously ${\ab}^{\ast}_{\ell}(z)=\tfrac{(z+1)}{2}z^{-1}{\ab}^{\ast}_{\ell-1}(z)=(\mathcal{I}{\ab}_{\ell-1})^{\ast}(z)$.
\end{example}

\section{Increasing the smoothness of vector subdivision schemes}
\label{sec:vector}
In this section we describe a procedure for increasing the smoothness of {\vs}s, which is similar to the scalar case. It is more involved since we consider masks consisting of matrix sequences.

\subsection{Convergence and smoothness analysis}
First we present results concerning the convergence and smoothness of {\vs}s. Their proofs can be found in \citet{charina05,cohen96,micchelli98,sauer02}.

For a mask $\A$ of a {\vs} we define
\begin{equation}\label{A0A1}
A^0=\sum_{i \in \m}A_{2i}, \quad A^1=\sum_{i \in \m}A_{2i+1}.
\end{equation}
Following \citet{micchelli98}, let
\begin{equation}\label{eigenspace}
\mathcal{E}_{\A}=\{v \in \mathbb{R}^p: A^0v=v \text{ and } A^1v=v\}
\end{equation}
and $k=\dim\mathcal{E}_{\A}$. A priori, $0 \leq k \leq p$. However, for a convergent {\vs}, $\mathcal{E}_{\A}\neq \{0\}$, i.e. $1 \leq k \leq p$. Therefore, the existence of a common eigenvector of $A^0$ and $A^1$ w.r.t.\ the eigenvalue $1$ is a necessary condition for convergence.

The next lemma reduces the convergence analysis to the case $\mathcal{E}_{\A}=\operatorname{span}\{e_1,\ldots,e_k\}.$
\begin{lemma}\label{Vconv}
Let $S_{\A}$ be a $C^{\ell}$ $(\ell \geq 0)$ convergent {\vs}. Given an invertible matrix $R \in \mathbb{R}^{p\times p}$, define a new mask $\hat{\A}$ by $\hat{A}_i=R^{-1}A_iR$ for $i \in \m$.
\begin{enumerate}
 \item  The {\vs} associated with $\hat{\A}$ is also $C^{\ell}$.
 \item \label{Rbasis} There exist invertible matrices such that $\hat{\A}$ satisfies $\mathcal{E}_{\hat{\A}}=\operatorname{span}\{e_1,\ldots,e_k\}$, where $k=\dim\mathcal{E}_{\A}$.
 \end{enumerate}
\end{lemma}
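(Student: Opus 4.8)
The plan is to prove both claims by direct computation with the conjugated mask $\hat{\A}=\{R^{-1}A_iR:i\in\m\}$, using that conjugation commutes with summation and acts componentwise on sequences.

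For part (1), I would first record that conjugation intertwines the two subdivision operators. Writing $R\bc$ for the sequence $\{Rc_j\}_{j\in\m}$, one has for every $\bc\in\ell(\mathbb{R}^p)$ and $i\in\m$
\[
(S_{\hat{\A}}\bc)_i=\sum_{j\in\m}\hat{A}_{i-2j}c_j=\sum_{j\in\m}R^{-1}A_{i-2j}(Rc_j)=R^{-1}\bigl(S_{\A}(R\bc)\bigr)_i,
\]
and hence $S_{\hat{\A}}^n\bc=R^{-1}S_{\A}^n(R\bc)$ for all $n$. Now fix arbitrary input $\bc^0\in\ell^{\infty}(\mathbb{R}^p)$; then $R\bc^0$ is again bounded, so the $C^{\ell}$ convergence of $S_{\A}$ yields $\Psi\in C^{\ell}(\mathbb{R},\mathbb{R}^p)$ with $\sup_{i\in\m}\|(S_{\A}^n(R\bc^0))_i-\Psi(i/2^n)\|\to0$. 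Applying $R^{-1}$ and using $\|R^{-1}v\|\le\|R^{-1}\|\,\|v\|$ gives $\sup_{i\in\m}\|(S_{\hat{\A}}^n\bc^0)_i-R^{-1}\Psi(i/2^n)\|\to0$, where $R^{-1}\Psi$ is still $\ell$-times continuously differentiable; non-triviality of the limit for some data is preserved since $R^{-1}$ is invertible. This proves that $S_{\hat{\A}}$ is $C^{\ell}$.

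For part (2), I would compute how the space $\mathcal{E}_{\A}$ behaves under conjugation. Summing \cref{A0A1} over even resp.\ odd indices gives $\hat{A}^0=R^{-1}A^0R$ and $\hat{A}^1=R^{-1}A^1R$, so from \cref{eigenspace} a vector $v$ belongs to $\mathcal{E}_{\hat{\A}}$ iff $A^0(Rv)=Rv$ and $A^1(Rv)=Rv$, i.e.\ iff $Rv\in\mathcal{E}_{\A}$. Thus $\mathcal{E}_{\hat{\A}}=R^{-1}\mathcal{E}_{\A}$, and in particular $\dim\mathcal{E}_{\hat{\A}}=k$ for every invertible $R$. It then suffices to choose $R$ with $R(\operatorname{span}\{e_1,\dots,e_k\})=\mathcal{E}_{\A}$: take a basis $v_1,\dots,v_k$ of $\mathcal{E}_{\A}$, extend it to a basis $v_1,\dots,v_p$ of $\mathbb{R}^p$, and let $R$ be the matrix whose columns are $v_1,\dots,v_p$. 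Then $R$ is invertible, $Re_j=v_j$, and $R^{-1}\mathcal{E}_{\A}=\operatorname{span}\{e_1,\dots,e_k\}$ as required.

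I do not expect a genuine obstacle here; the argument is essentially bookkeeping. The only points needing care are to read the similarity transform componentwise on sequences rather than as a single operator on $\ell^{\infty}(\mathbb{R}^p)$, and to note that boundedness of $R$ and $R^{-1}$ is precisely what transfers both the convergence and the smoothness class through the transform. It is also worth stressing, for use in the later sections, that the matrix $R$ in part (2) is fixed independently of the input $\bc^0$, so the reduction to $\mathcal{E}_{\hat{\A}}=\operatorname{span}\{e_1,\dots,e_k\}$ holds uniformly.
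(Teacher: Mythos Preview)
Your argument is correct and is the standard elementary verification: the intertwining $S_{\hat{\A}}^n\bc=R^{-1}S_{\A}^n(R\bc)$ transfers $C^{\ell}$ convergence, and $\mathcal{E}_{\hat{\A}}=R^{-1}\mathcal{E}_{\A}$ makes the existence of a suitable $R$ immediate. The paper itself does not supply a proof of this lemma; it is listed among the background results whose proofs are referred to \citet{charina05,cohen96,micchelli98,sauer02}, so there is no paper-proof to compare against beyond noting that your direct computation is exactly the kind of argument those references use.
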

In \citet{cohen96,sauer02} the following generalization of the forward-difference operator $\D$ is introduced:
\begin{equation}\label{forwarddifference}
\D_k=\begin{pmatrix} \D I_k & 0 \\ 0 & I_{p-k} \end{pmatrix},
\end{equation}
where $I_k$ is the $(k\times k)$ unit matrix. It is shown there that if 
\begin{equation}\label{unitvectors}
\mathcal{E}_{\A}=\operatorname{span}\{e_1,\ldots,e_k\},
\end{equation}
then in analogy to \cref{scalarpartial}, there exists a matrix mask $\partial_{k}\A $ such that
\begin{equation}\label{derivedscheme}
\D_kS_{\A}=\tfrac{1}{2}S_{\partial_{k}\A}\D_k.
\end{equation}
Algebraic conditions guaranteeing \cref{derivedscheme} are stated and proved in the next subsection.

We denote by $\partial_k\A$ any mask satisfying \cref{derivedscheme}.
The vector scheme associated with $\partial_{k}\A$ is called the \emph{derived scheme} of $\A$ with respect to $\D_k$.
Furthermore, we have the following result concerning the convergence of $S_{\A}$ in terms of $S_{\partial_k\A}$:
\begin{theorem}\label{vectorsmooth} 
Let $\A$ be a mask such that $\mathcal{E}_{\A}=\operatorname{span}\{e_1,\ldots,e_k\}$. If $\|(\tfrac{1}{2}S_{\partial_{k}\A})^L\|<1$ for some $L \in \mathbb{N}$ (that is, $\tfrac{1}{2}S_{\partial_{k}\A}$ is contractive), then the vector scheme associated with $\A$ is convergent.
\end{theorem}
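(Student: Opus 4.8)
The plan is to convert the contractivity hypothesis into geometric decay of $\D_k\bc^n$ along the iterates $\bc^n=S_\A^n\bc^0$, and then to reconstruct a continuous limit by the classical piecewise‑linear interpolation argument of \citet{cohen96,sauer02}, paying attention to the block structure forced by the normalisation \cref{unitvectors}.

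First I would iterate the factorisation \cref{derivedscheme}: from $\D_kS_\A=\tfrac12S_{\partial_k\A}\D_k$ an easy induction gives $\D_kS_\A^n=(\tfrac12S_{\partial_k\A})^n\D_k$, so $\D_k\bc^n=(\tfrac12S_{\partial_k\A})^n\D_k\bc^0$ for every $\bc^0\in\ell^\infty(\mathbb{R}^p)$. Writing $n=qL+r$ with $0\le r<L$ and using submultiplicativity of the operator norm together with $\|(\tfrac12S_{\partial_k\A})^L\|=\rho<1$, one gets $\|(\tfrac12S_{\partial_k\A})^n\|\le C\mu^n$ with $\mu=\rho^{1/L}<1$ and $C$ depending only on $\A,k,L$; hence $\|\D_k\bc^n\|_\infty\le C\mu^n\|\D_k\bc^0\|_\infty$. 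Splitting $c_i^n\in\mathbb{R}^p$ into its first $k$ and last $p-k$ coordinates, this single estimate says that the first‑block differences $c_{i+1}^n-c_i^n$ and the last $p-k$ coordinates of $c_i^n$ themselves both decay like $\mu^n$, uniformly in $i$.

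Next I would record that, by \cref{eigenspace} and \cref{unitvectors}, the vectors $e_1,\ldots,e_k$ are fixed by $A^0$ and by $A^1$, so $A^0$ and $A^1$ are block upper triangular for the splitting $\mathbb{R}^p=\operatorname{span}\{e_1,\ldots,e_k\}\oplus\operatorname{span}\{e_{k+1},\ldots,e_p\}$ and act as the identity on the first block. Let $f_n\colon\mathbb{R}\to\mathbb{R}^p$ be the piecewise‑linear interpolant with $f_n(i/2^n)=c_i^n$; since $f_{n+1}-f_n$ is piecewise linear with breakpoints in $2^{-n-1}\m$, its sup norm is attained among the values $c_{2m}^{n+1}-c_m^n$ and $c_{2m+1}^{n+1}-\tfrac12(c_m^n+c_{m+1}^n)$. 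Using $c_{2m}^{n+1}=\sum_lA_{2l}c_{m-l}^n$, $c_{2m+1}^{n+1}=\sum_lA_{2l+1}c_{m-l}^n$ and $\sum_lA_{2l}=A^0$, $\sum_lA_{2l+1}=A^1$, one rewrites each of these as a finite sum of terms $A_j(c_{m-l}^n-c_m^n)$, of $(A^0-I)c_m^n$ or $(A^1-I)c_m^n$, and of $(\D\bc^n)_m$. By block triangularity the $(A^j-I)c_m^n$ terms only see the last $p-k$ coordinates of $c_m^n$; and on the first block every remaining term telescopes into first‑block differences, while on the second block the values themselves are being controlled. Each term is thus bounded by a constant depending only on $\A$ and $N$ times $\|\D_k\bc^n\|_\infty$, giving $\|f_{n+1}-f_n\|_\infty\le C'\mu^n$.

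Since $\sum_n\mu^n<\infty$, the sequence $\{f_n\}$ is uniformly Cauchy, hence converges uniformly to some $\Psi\in C(\mathbb{R},\mathbb{R}^p)$, and $\sup_i\|c_i^n-\Psi(i/2^n)\|\le\|f_n-\Psi\|_\infty\to 0$. That $\Psi\neq0$ for some data is seen by taking $\bc^0=\{e_1:i\in\m\}$: from $A^0e_1=A^1e_1=e_1$ one gets $S_\A\bc^0=\bc^0$, so the limit is the nonzero constant $e_1$. The step I expect to be the main obstacle is the bookkeeping of the third paragraph: one must check that \emph{every} term left over after using $\sum_lA_{2l}=A^0$ and $\sum_lA_{2l+1}=A^1$ is genuinely controlled by $\|\D_k\bc^n\|_\infty$ and not merely by $\|\bc^n\|_\infty$ — this is exactly where the asymmetry of $\D_k$ (differences on the first $k$ coordinates, plain values on the last $p-k$) and the block triangularity of $A^0,A^1$ have to be used together.
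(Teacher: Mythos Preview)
The paper does not give its own proof of this theorem: it is stated in \Cref{sec:vector} as a known result and its proof is explicitly deferred to \citet{charina05,cohen96,micchelli98,sauer02} (the paper even adds ``In fact there is a stronger result in \citet{charina05,cohen96}, but we only need this special case''). So there is no in-paper proof to compare against.

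Your argument is essentially the standard one from those references, and it is correct. The key ingredients are exactly the ones you identify: iterating \cref{derivedscheme} to get geometric decay of $\D_k\bc^n$; using that $\mathcal{E}_{\A}=\operatorname{span}\{e_1,\ldots,e_k\}$ forces $A^0,A^1$ to have block form $\bigl(\begin{smallmatrix} I_k & * \\ 0 & * \end{smallmatrix}\bigr)$; and then bounding $\|f_{n+1}-f_n\|_\infty$ by a constant times $\|\D_k\bc^n\|_\infty$ via the piecewise-linear interpolant. Your own caveat in the last paragraph is the right one to flag, and it does go through: for the terms $A_{2l}(c_{m-l}^n-c_m^n)$ the first $k$ components of $c_{m-l}^n-c_m^n$ telescope into at most $|l|$ first-block differences, while the last $p-k$ components are bounded by $2\|\D_k\bc^n\|_\infty$ directly; and $(A^j-I)c_m^n$ depends only on the last $p-k$ coordinates of $c_m^n$ because the first block column of $A^j-I$ vanishes. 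Since the mask is finitely supported, the constants are uniform in $m$. The nontriviality check via the constant sequence $e_1$ is also the standard one.
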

In fact there is a stronger result in \citet{charina05,cohen96}, but we only need this special case.
Two important results for the analysis of smoothness of {\vs}s are
\begin{theorem}[\citet{micchelli98}]
Let $\A$ be a mask of a convergent {\vs}, such that $\mathcal{E}_{\A}=\operatorname{span}\{e_1,\ldots,e_k\}$ for $k\leq p$, then
\begin{equation}
 \dim \mathcal{E}_{\partial_k \A}=\dim \mathcal{E}_{\A}.
\end{equation}
\end{theorem}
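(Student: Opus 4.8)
The plan is to reduce everything to the boundary values of the symbols at $z=\pm1$ and then to exhibit a linear isomorphism between $\mathcal{E}_{\A}$ and $\mathcal{E}_{\partial_k\A}$.

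First I would record the block structure forced by the normalization. Writing vectors of $\mathbb{R}^p$ as $\binom{v'}{v''}$ with $v'\in\mathbb{R}^k$ and $v''\in\mathbb{R}^{p-k}$, the hypothesis $\mathcal{E}_{\A}=\operatorname{span}\{e_1,\dots,e_k\}$ gives $A^0e_i=A^1e_i=e_i$ for $i\le k$, hence $A^0=\begin{pmatrix} I_k & C^0\\ 0 & E^0\end{pmatrix}$ and $A^1=\begin{pmatrix} I_k & C^1\\ 0 & E^1\end{pmatrix}$ for suitable blocks; moreover $\mathcal{E}_{\A}$ being \emph{exactly} $\operatorname{span}\{e_1,\dots,e_k\}$ says precisely that $v''=0$ is the only solution of $C^0v''=C^1v''=0$, $E^0v''=E^1v''=v''$. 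Call this property $(\ast)$.

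Second, I would translate both eigenspaces into kernel conditions at $z=\pm1$. Since $A^0=\tfrac12(\A^*(1)+\A^*(-1))$ and $A^1=\tfrac12(\A^*(1)-\A^*(-1))$, one has $v\in\mathcal{E}_{\A}$ iff $\A^*(1)v=2v$ and $\A^*(-1)v=0$, and similarly $w\in\mathcal{E}_{\partial_k\A}$ iff $(\partial_k\A)^*(1)w=2w$ and $(\partial_k\A)^*(-1)w=0$. To get $(\partial_k\A)^*(\pm1)$ I would write \cref{derivedscheme} in symbol form, $d_k(z)\,\A^*(z)=\tfrac12\,(\partial_k\A)^*(z)\,d_k(z^2)$, where $d_k(z)=\operatorname{diag}\bigl((z^{-1}-1)I_k,\,I_{p-k}\bigr)$ is the generating-function symbol of $\D_k$ (cf.\ \Cref{lem:maskproperties}), and then compare the constant and the linear Taylor coefficients of both sides at $z=1$ and at $z=-1$. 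Because $d_k$ is singular at these two points, this expresses $(\partial_k\A)^*(\pm1)$ through $\A^*(\pm1)$ together with the first derivatives $(\A^*)'(\pm1)$; in block form one obtains, for instance, $(\partial_k\A)^*(1)=\begin{pmatrix}2I_k & 0\\ * & 2(E^0+E^1)\end{pmatrix}$, and an analogous formula at $z=-1$.

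Third, armed with these formulas I would write down the candidate linear map $\mathcal{E}_{\A}\to\mathcal{E}_{\partial_k\A}$ (or its inverse, whichever is more transparent), verify using $(\ast)$ and the block identities that it really lands in $\mathcal{E}_{\partial_k\A}$, and check injectivity and surjectivity by a rank count on the linear systems defining the two kernels. The step I expect to be the main obstacle is controlling the off-diagonal blocks built from $(\A^*)'(\pm1)$: for a mask that is only bounded rather than convergent, these terms can make $\dim\mathcal{E}_{\partial_k\A}$ differ from $\dim\mathcal{E}_{\A}$, so the convergence of $S_\A$ has to enter decisively at precisely this point. Concretely, I would feed into the iterated relation $\D_k S_{\A}^{\,n}=2^{-n}S_{\partial_k\A}^{\,n}\D_k$ both constant sequences whose value lies in the last $p-k$ coordinates and ``ramp-type'' sequences that are affine in the first $k$ coordinates and constant in the last; convergence of $S_\A$ then pins down the action of $S_{\partial_k\A}$ on constant sequences, which is what forces the problematic equations to be redundant and makes the dimensions agree.
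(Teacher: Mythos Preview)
The paper does not prove this theorem; it is stated with attribution to \citet{micchelli98} and no argument is supplied. So there is no proof in the paper to compare your proposal against.

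On the proposal itself: the setup in steps 1--2 is fine and agrees with \Cref{eigen} and the explicit symbol in \cref{defD}; your block form for $(\partial_k\A)^{\ast}(1)$ is correct. The difficulty is entirely in your step 3, and there the proposal is a plan rather than a proof. You never name the ``candidate linear map'' between $\mathcal{E}_{\A}$ and $\mathcal{E}_{\partial_k\A}$, and the obvious guess fails: the vectors $e_1,\dots,e_k$ are in general \emph{not} in $\mathcal{E}_{\partial_k\A}$, because the lower-left block $(\partial_k\A)^{\ast}_{21}(1)$, which is $2\A^{\ast}_{21}(z)/(z^{-2}-1)$ evaluated at $z=1$, is typically nonzero. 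For the upper bound, the $z=1$ condition reads $(\A^{\ast}_{22}(1)-I_{p-k})w''=-\tfrac12(\partial_k\A)^{\ast}_{21}(1)\,w'$; to conclude that $w''$ is determined by $w'$ you would need $1\notin\operatorname{spec}\bigl(\A^{\ast}_{22}(1)\bigr)$, but convergence of $S_{\A}$ (via \Cref{alggeomulti}) only gives $2\notin\operatorname{spec}\bigl(\A^{\ast}_{22}(1)\bigr)$. So both directions genuinely require the $z=-1$ information and the derivative blocks you allude to, and your sentence ``convergence of $S_{\A}$ then pins down the action of $S_{\partial_k\A}$ on constant sequences'' is where the real argument must happen---as written it is an expectation, not a proof. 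To complete this route you would have to compute those derivative blocks explicitly and carry out a concrete rank count, or else argue via the limit $\lim_n M_{\B}^n$ as in \Cref{eigenvaluesmod}.
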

\begin{theorem}[\citet{charina05}]\label{smoothvector}
Let $\A$ be a mask such that $\mathcal{E}_{\A}=\operatorname{span}\{e_1,\ldots,e_k\}$. If the {\vs} associated with $\partial_{k}\A$ is $C^{\ell}$ for $\ell\geq 0$, then the {\vs} associated with $\A$ is $C^{\ell+1}$.
\end{theorem}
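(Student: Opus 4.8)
The plan is to transfer the scalar argument behind \Cref{scalarsmooth} to the vector setting, with the block difference operator $\D_k$ of \cref{forwarddifference} playing the role of $\D$. Let $P$ be the coordinate projection of $\mathbb{R}^p$ onto $\operatorname{span}\{e_1,\ldots,e_k\}$, let $Q=I_p-P$, and extend both entrywise to sequences in $\ell^\infty(\mathbb{R}^p)$. First I would iterate the intertwining identity \cref{derivedscheme}: a straightforward induction on $n$ gives
\[
\D_kS_{\A}^n=\tfrac{1}{2^n}S_{\partial_k\A}^n\D_k,\qquad n\geq 1 .
\]
Fix $\bc^0\in\ell^\infty(\mathbb{R}^p)$ and set $\bc^n=S_{\A}^n\bc^0$ and $\bd^n=S_{\partial_k\A}^n(\D_k\bc^0)$, so that $\D_k\bc^n=2^{-n}\bd^n$; reading off the block structure of $\D_k$, this is equivalent to
\[
Pc^n_{i+1}-Pc^n_i=2^{-n}Pd^n_i\quad\text{and}\quad Qc^n_i=2^{-n}Qd^n_i,\qquad i\in\m .
\]
Since $S_{\partial_k\A}$ is $C^\ell$, there is a limit $\Phi\in C^\ell(\mathbb{R},\mathbb{R}^p)$ with $\sup_i\|d^n_i-\Phi(i/2^n)\|\to 0$, and in particular $\sup_n\|\bd^n\|_\infty<\infty$.

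Next I would check that $S_{\A}$ is itself convergent, which is included in the claimed conclusion $C^{\ell+1}$ but is not furnished directly by the hypothesis. Because $S_{\partial_k\A}$ is $C^0$, its limit functions are bounded, so the iterates $S_{\partial_k\A}^n$ are pointwise bounded on the Banach space $\ell^\infty(\mathbb{R}^p)$; by the uniform boundedness principle $M:=\sup_n\|S_{\partial_k\A}^n\|<\infty$, and hence $\|(\tfrac12 S_{\partial_k\A})^n\|\le 2^{-n}M<1$ for all large $n$. Thus $\tfrac12 S_{\partial_k\A}$ is contractive, so \Cref{vectorsmooth} provides a continuous limit $\Psi$ for $S_{\A}$, with $\sup_i\|c^n_i-\Psi(i/2^n)\|\to 0$ and $\Psi\neq 0$ for some $\bc^0$.

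It then remains to upgrade $\Psi$ to $C^{\ell+1}$ using the two displayed block relations. From $Qc^n_i=2^{-n}Qd^n_i$ and $\sup_n\|\bd^n\|_\infty<\infty$ we get $Qc^n_i\to 0$ uniformly in $i$; comparing with $Qc^n_i\to Q\Psi(i/2^n)$ and using density of the dyadic rationals together with continuity of $\Psi$ yields $Q\Psi\equiv 0$. From $Pc^n_{i+1}-Pc^n_i=2^{-n}Pd^n_i$, telescoping from index $0$ gives $Pc^n_i=Pc^n_0+2^{-n}\sum_{j=0}^{i-1}Pd^n_j$ for $i\geq 0$, with the symmetric identity for $i<0$. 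Fixing $x\in\mathbb{R}$ and indices $i_n$ with $i_n/2^n\to x$, one has $Pc^n_{i_n}\to P\Psi(x)$ and $Pc^n_0\to P\Psi(0)$, while
\[
2^{-n}\sum_{j=0}^{i_n-1}Pd^n_j=2^{-n}\sum_{j=0}^{i_n-1}P\Phi(j/2^n)+2^{-n}\sum_{j=0}^{i_n-1}\big(Pd^n_j-P\Phi(j/2^n)\big),
\]
where the first term is a Riemann sum converging to $\int_0^xP\Phi(t)\,dt$ (as $P\Phi$ is continuous and $i_n/2^n\to x$) and the second is at most $(i_n/2^n)\sup_j\|d^n_j-\Phi(j/2^n)\|\to 0$. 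Hence $P\Psi(x)=P\Psi(0)+\int_0^xP\Phi(t)\,dt$, so $P\Psi\in C^{\ell+1}$ with $(P\Psi)'=P\Phi$ because $P\Phi\in C^\ell$. Therefore $\Psi=P\Psi\in C^{\ell+1}(\mathbb{R},\mathbb{R}^p)$, which together with the convergence of $S_{\A}$ shows that $S_{\A}$ is $C^{\ell+1}$.

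The step I expect to be the \emph{main obstacle} is the second one, namely getting convergence of $S_{\A}$ started: the hypothesis constrains only $\partial_k\A$, so one must manufacture the contraction estimate for $\tfrac12 S_{\partial_k\A}$ out of bare $C^0$-convergence (via uniform boundedness) before \Cref{vectorsmooth} can be invoked. A secondary point needing care is making the Riemann-sum passage uniform for $x$ in compact sets, so as to conclude genuine $C^{\ell+1}$ regularity of $\Psi$ (equivalently $(P\Psi)'=P\Phi$) rather than mere differentiability; the vanishing $Q\Psi\equiv 0$ of the last $p-k$ components, though specific to the vector case, is comparatively immediate.
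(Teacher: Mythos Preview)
The paper does not give its own proof of this theorem: it is quoted from \citet{charina05}, and the only argument supplied is the Remark immediately after the statement, which justifies dropping the convergence assumption on $S_{\A}$ by noting that $C^{\ell}$ convergence of $S_{\partial_k\A}$ makes $\tfrac12 S_{\partial_k\A}$ contractive, whence \Cref{vectorsmooth} gives convergence of $S_{\A}$.

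Your proposal therefore goes well beyond what the paper itself does: you supply a complete, self-contained argument for the smoothness transfer that the paper delegates to the cited reference. The structure you use --- iterate \cref{derivedscheme} to $\D_k S_{\A}^n=2^{-n}S_{\partial_k\A}^n\D_k$, read off from the block form of $\D_k$ that the last $p-k$ components of the limit vanish, and recover the first $k$ components as an antiderivative of $P\Phi$ via telescoping and a Riemann-sum limit --- is the standard line of proof for results of this type and is correct. On the point you single out as the main obstacle, the paper's Remark asserts contractivity of $\tfrac12 S_{\partial_k\A}$ in one line without justification; your Banach--Steinhaus argument is a valid way to fill this in (it implicitly uses that limits from $\ell^{\infty}$ data are bounded, which follows from the compact support of the basic limit function for finitely supported masks). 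In short, your proof is sound and strictly more detailed than the paper's treatment, which simply cites the literature for the substance of the result.
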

\begin{remark}
In the last theorem we omitted the assumption that $S_{\A}$ is convergent required in \citet{charina05}.This is possible because if $S_{\partial_k\A}$ is $C^{\ell}$,
then $\frac{1}{2}S_{\partial_k\A}$ is contractive implying that $S_{\A}$ is convergent in view of \Cref{vectorsmooth}.
\end{remark}
A useful observation for our analysis is
\begin{lemma}\label{eigen}
Let $\A$ be a matrix mask. Then
$$\mathcal{E}_{\A}=\{v \in \mathbb{R}^p: \A^{\ast}(1)v=2v \text{ and } \A^{\ast}(-1)v=0\}.$$ 
\end{lemma}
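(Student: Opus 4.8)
The plan is to reduce the claim to the elementary generating-function identities collected in \Cref{lem:maskproperties}. Since $\A$ is finitely supported, $\A^{\ast}(z)$ is a Laurent polynomial in $z$ with coefficients in $\mathbb{R}^{p\times p}$, so the evaluations $\A^{\ast}(1)$ and $\A^{\ast}(-1)$ are genuine matrices and the sums in \cref{A0A1} are finite. First I would apply the formulas of \Cref{lem:maskproperties} for $\bc^{\ast}(1)$ and $\bc^{\ast}(-1)$ to the matrix mask $\A$ and read off, using the notation \cref{A0A1},
\begin{equation*}
\A^{\ast}(1)=A^0+A^1, \qquad \A^{\ast}(-1)=A^0-A^1,
\end{equation*}
equivalently $A^0=\tfrac12\big(\A^{\ast}(1)+\A^{\ast}(-1)\big)$ and $A^1=\tfrac12\big(\A^{\ast}(1)-\A^{\ast}(-1)\big)$.

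With these identities in hand, the two descriptions of $\mathcal{E}_{\A}$ become linear reformulations of one another, and I would verify the two inclusions directly. For "$\subseteq$": given $v$ with $A^0v=v$ and $A^1v=v$, adding the two equations gives $\A^{\ast}(1)v=(A^0+A^1)v=2v$, and subtracting them gives $\A^{\ast}(-1)v=(A^0-A^1)v=0$. For "$\supseteq$": given $v$ with $\A^{\ast}(1)v=2v$ and $\A^{\ast}(-1)v=0$, the relations above yield $A^0v=\tfrac12\big(\A^{\ast}(1)v+\A^{\ast}(-1)v\big)=v$ and similarly $A^1v=\tfrac12\big(\A^{\ast}(1)v-\A^{\ast}(-1)v\big)=v$. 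Hence the two sets coincide.

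I do not expect a genuine obstacle here: the statement is a bookkeeping identity that amounts to a change of basis between the pair $(A^0,A^1)$ and the pair $(\A^{\ast}(1),\A^{\ast}(-1))$. The only point worth an explicit remark is that the finite support of the mask is what makes the symbol evaluations at $\pm 1$ well defined and the matrices $A^0$, $A^1$ finite, so that all the manipulations above are legitimate.
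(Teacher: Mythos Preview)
Your proof is correct and follows essentially the same route as the paper: both derive the relations $A^0=\tfrac12(\A^{\ast}(1)+\A^{\ast}(-1))$ and $A^1=\tfrac12(\A^{\ast}(1)-\A^{\ast}(-1))$ from the definition of the symbol and then read off the equality of the two descriptions of $\mathcal{E}_{\A}$. The paper's version is simply more terse, omitting the explicit verification of the two inclusions that you spell out.
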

\begin{proof}
It follows immediately from \cref{A0A1} and the definition of a symbol that $A^0=\tfrac{1}{2}\Big(\A^{\ast}(1)+\A^{\ast}(-1)\Big)$ and $A^1=\tfrac{1}{2}\Big(\A^{\ast}(1)-\A^{\ast}(-1)\Big)$. This, together with \cref{eigenspace}, implies the claim of the lemma.
\end{proof}

\subsection{Algebraic conditions}
We would like to modify a given mask $\B$ of a $C^{\ell}$ {\vs} to obtain a new scheme $S_{\A}$ which is $C^{\ell+1}$.
The idea is to define $\A$ such that $\partial_k\A=\B$, i.e.\ such that \cref{derivedscheme} is satisfied for some $k$. If we can prove that $\mathcal{E}_{\A}=\operatorname{span}\{e_1,\ldots,e_k\}$,
then by \Cref{smoothvector}, the scheme $S_{\A}$ is $C^{\ell+1}$. There are some immediate questions:
 \begin{enumerate}
\item\label{que1} Under what conditions on a mask $\B$ can we define a mask $\A$ such that $\partial_k\A=\B$?
\item\label{que2} How to choose $k$?
\end{enumerate}
In order to answer these questions, we have to study in more details the mask of the derived scheme $\partial_k \A$ and its relation to the mask $\A$.
\begin{definition}\label{blockmatrixnot}
For a mask $\A$ of dimension $p$, i.e.\ $A_i \in \mathbb{R}^{p\times p}$ for $i\in \m$, and a fixed $k \in \{1,\ldots,p\},$ we introduce the block notation
\begin{equation*}
\A=\begin{pmatrix} \A_{11} & \A_{12} \\ \A_{21} & \A_{22}\end{pmatrix},
\end{equation*}
with $\A_{11}$ of size $(k \times k)$.
\end{definition}
In the next lemma, we present algebraic conditions on a symbol $\A^{\ast}(z)$ guaranteeing the existence of $\partial_k \A$ for a fixed $k \in \{1,\ldots,p\}$, and also show that if  
$\mathcal{E}_{\A}=\operatorname{span}\{e_1,\ldots,e_k\}$ these conditions hold.
\begin{lemma}\label{smooth1}
Let $\A,\B$ be masks of dimension $p$. With the notation of \Cref{blockmatrixnot} we have
\begin{enumerate}
\item\label{partial} If there exists $k \in \{1,\ldots,p\}$ such that $\A^{\ast}_{11}(-1)=0, \A^{\ast}_{21}(-1)=0$ and $\A^{\ast}_{21}(1)=0$, then there exists a mask $\partial_k\A$ satisfying \cref{derivedscheme}. 
\item\label{existsderived} If $\mathcal{E}_{\A}=\operatorname{span}\{e_1,\ldots,e_k\}$, then $\A^{\ast}(z)$ the conditions of (\ref{partial}) are satisfied.
\end{enumerate}
\end{lemma}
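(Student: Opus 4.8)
The plan is to reduce everything to an identity among matrix Laurent polynomials via the symbol calculus of \Cref{lem:maskproperties}. Write
\[
M(z)=\begin{pmatrix}(z^{-1}-1)I_k & 0\\ 0 & I_{p-k}\end{pmatrix},
\]
so that, splitting a sequence $\bc\in\ell(\mathbb{R}^p)$ as in \Cref{blockmatrixnot}, one has $(\D_k\bc)^{\ast}(z)=M(z)\bc^{\ast}(z)$. Combined with $(S_{\A}\bc)^{\ast}(z)=\A^{\ast}(z)\bc^{\ast}(z^2)$, for any fixed pair of masks $\A,\partial_k\A$ the operator identity \cref{derivedscheme} holds for all $\bc$ if and only if
\[
M(z)\A^{\ast}(z)=\tfrac12(\partial_k\A)^{\ast}(z)M(z^2)
\]
as matrices of Laurent polynomials (testing against $\bc=e_j$ supported at $0$ recovers the matrix identity column by column, and conversely).

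For part (\ref{partial}) I would define the candidate symbol $(\partial_k\A)^{\ast}(z):=2\,M(z)\A^{\ast}(z)\,M(z^2)^{-1}$, a priori a matrix over $\mathbb{R}(z)$ (note $M(z^2)=\operatorname{diag}\big((z^{-2}-1)I_k,\,I_{p-k}\big)$ is invertible over $\mathbb{R}(z)$). The computational heart is to evaluate this product blockwise: right multiplication by $M(z^2)^{-1}$ rescales only the first $k$ columns, and using $z^{-2}-1=(z^{-1}-1)(z^{-1}+1)$ one gets $(z^{-1}-1)(z^{-2}-1)^{-1}=\tfrac{z}{z+1}$ and $(z^{-2}-1)^{-1}=\tfrac{z^{2}}{1-z^{2}}$, whence
\[
(\partial_k\A)^{\ast}(z)=2\begin{pmatrix} \dfrac{z}{z+1}\,\A^{\ast}_{11}(z) & (z^{-1}-1)\,\A^{\ast}_{12}(z) \\[6pt] \dfrac{z^{2}}{1-z^{2}}\,\A^{\ast}_{21}(z) & \A^{\ast}_{22}(z) \end{pmatrix}.
\]
The $(1,2)$ and $(2,2)$ blocks are Laurent polynomials with no extra hypothesis; the $(1,1)$ block is a Laurent polynomial precisely when $(z+1)\mid\A^{\ast}_{11}(z)$, i.e.\ $\A^{\ast}_{11}(-1)=0$; and the $(2,1)$ block precisely when $(z-1)(z+1)\mid\A^{\ast}_{21}(z)$, i.e.\ $\A^{\ast}_{21}(1)=\A^{\ast}_{21}(-1)=0$. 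Thus the three stated conditions make $(\partial_k\A)^{\ast}$ a matrix Laurent polynomial, hence the symbol of a finitely supported mask $\partial_k\A$; and by construction $\tfrac12(\partial_k\A)^{\ast}(z)M(z^2)=M(z)\A^{\ast}(z)M(z^2)^{-1}M(z^2)=M(z)\A^{\ast}(z)$ as an identity of matrices over $\mathbb{R}(z)$, hence coefficientwise between Laurent polynomials, so \cref{derivedscheme} holds.

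For part (\ref{existsderived}) I would translate the hypothesis $\mathcal{E}_{\A}=\operatorname{span}\{e_1,\dots,e_k\}$ through \Cref{eigen}: since $e_1,\dots,e_k\in\mathcal{E}_{\A}$, we have $\A^{\ast}(-1)e_j=0$ and $\A^{\ast}(1)e_j=2e_j$ for $j=1,\dots,k$, that is, the first $k$ columns of $\A^{\ast}(-1)$ vanish and the first $k$ columns of $\A^{\ast}(1)$ equal $\binom{2I_k}{0}$. Reading these off in the block notation of \Cref{blockmatrixnot} yields exactly $\A^{\ast}_{11}(-1)=0$, $\A^{\ast}_{21}(-1)=0$ (lower part of the first columns of $\A^{\ast}(-1)$), and $\A^{\ast}_{21}(1)=0$ (lower part of the first columns of $\A^{\ast}(1)$), i.e.\ the hypotheses of part (\ref{partial}).

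The main obstacle is the blockwise computation in part (\ref{partial}): one must track correctly how multiplication by the diagonal rational matrices $M(z)$ and $M(z^2)^{-1}$ acts on the rows and columns of $\A^{\ast}(z)$, carry out the cancellation $(z^{-1}-1)(z^{-2}-1)^{-1}=\tfrac{z}{z+1}$, and then verify that the divisibility requirements fall exactly on $\A^{\ast}_{11}$ and $\A^{\ast}_{21}$ (and not on $\A^{\ast}_{12}$), so that the three hypotheses are both necessary and sufficient for $(\partial_k\A)^{\ast}$ to be polynomial. Once this bookkeeping is in place, both the equivalence between the operator and symbol forms of \cref{derivedscheme} and the argument for part (\ref{existsderived}) are routine.
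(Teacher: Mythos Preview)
Your proposal is correct and follows essentially the same route as the paper: you define $(\partial_k\A)^{\ast}(z)$ by the block formula obtained from $2M(z)\A^{\ast}(z)M(z^2)^{-1}$ (which, after the cancellations you indicate, coincides with the paper's expression $2\bigl(\begin{smallmatrix}\A^{\ast}_{11}/(z^{-1}+1)&(z^{-1}-1)\A^{\ast}_{12}\\ \A^{\ast}_{21}/(z^{-2}-1)&\A^{\ast}_{22}\end{smallmatrix}\bigr)$), check that the three hypotheses are exactly what is needed for this to be a matrix Laurent polynomial, and for part~(\ref{existsderived}) you invoke \Cref{eigen} just as the paper does. Your write-up is a bit more explicit about the passage between operator and symbol identities, but the argument is the same.
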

\begin{proof}
Under the assumptions of (\ref{partial}), the matrix
\begin{equation}\label{defD}
2\begin{pmatrix} \A_{11}^{\ast}(z)/(z^{-1}+1) & (z^{-1}-1)\A_{12}^{\ast}(z)\\[0.2cm]
                            \A_{21}^{\ast}(z)/(z^{-2}-1) &  \A^{\ast}_{22}(z)
                          \end{pmatrix},
\end{equation}
is a matrix Laurent polynomial. If we denote it by $(\partial_k \A)^{\ast}(z)$, then the equation $\D_kS_{\A}=\tfrac{1}{2}S_{\partial_k\A}\D_k$ is satisfied. Indeed, if we write this last equation in terms of symbols, we get
\begin{align} \label{equpartial}
&\begin{pmatrix} (z^{-1}-1)I_k & 0 \\ 0 & I_{p-k}\end{pmatrix}
\begin{pmatrix} \A^{\ast}_{11}(z) & \A^{\ast}_{12}(z) \\ \A^{\ast}_{21}(z) & \A^{\ast}_{22}(z)\end{pmatrix}\\\nonumber
&=
\begin{pmatrix} \A_{11}^{\ast}(z)/(z^{-1}+1) & (z^{-1}-1)\A_{12}^{\ast}(z)\\[0.2cm]
                            \A_{21}^{\ast}(z)/(z^{-2}-1) &  \A^{\ast}_{22}(z)
                          \end{pmatrix}
\begin{pmatrix} (z^{-2}-1)I_k & 0 \\ 0 & I_{p-k}\end{pmatrix}.
\end{align}
It is easy to verify the validity of \cref{equpartial}.

In order to prove (\ref{existsderived}), we deduce from \Cref{eigen} that $\mathcal{E}_{\A}=\operatorname{span}\{e_1,\ldots,e_k\}$ implies the properties of $\A$ required in (\ref{partial}).
\end{proof}

In the proof of the validity of our smoothing procedure for {\vs}s and {\hs}s, we work with the algebraic conditions (\ref{partial}) in \Cref{smooth1} rather than with assumption (\ref{unitvectors}).
The reason is that the algebraic conditions can be checked and handled more easily. 

In order to define a procedure for increasing the smoothness of {\vs}s, we start by answering question (\ref{que1}):
\begin{lemma}\label{smooth2}
 Let $\A,\B$ be masks of dimension $p$ and let $k\in \{1,\ldots,p\}$. With the notation of \Cref{blockmatrixnot}, if
 $\B_{12}^{\ast}(1)=0$, then there exists a mask $\mathcal{I}_k \B$ satisfying
\begin{equation}\label{eqI}
\D_kS_{\mathcal{I}_k\B}=\tfrac{1}{2}S_{\B}\D_k,
\end{equation}
where $\D_k$ is defined in \cref{forwarddifference}.
\end{lemma}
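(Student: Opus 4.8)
The plan is to write $\mathcal{I}_k\B$ down explicitly at the level of symbols, by ``inverting'' block by block the formula \cref{defD} that defines $(\partial_k\A)^{\ast}(z)$. Concretely, given $\B$ written in the block form of \Cref{blockmatrixnot}, I would define $\A:=\mathcal{I}_k\B$ to be the mask with symbol
\begin{equation*}
\A^{\ast}(z)=\begin{pmatrix}
\tfrac{1}{2}(z^{-1}+1)\,\B^{\ast}_{11}(z) & \tfrac{1}{2}\B^{\ast}_{12}(z)/(z^{-1}-1)\\[0.2cm]
\tfrac{1}{2}(z^{-2}-1)\,\B^{\ast}_{21}(z) & \tfrac{1}{2}\B^{\ast}_{22}(z)
\end{pmatrix},
\end{equation*}
which is exactly the solution of $(\partial_k\A)^{\ast}(z)=\B^{\ast}(z)$ for the entries of $\A^{\ast}(z)$. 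This is the natural matrix analogue of the scalar smoothing operator $(\mathcal{I}\ab)^{\ast}(z)=\tfrac{1+z}{2}z^{-1}\ab^{\ast}(z)=\tfrac{z^{-1}+1}{2}\ab^{\ast}(z)$.

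The one point that needs care — and which I expect to be the only genuine obstacle — is showing that $\A^{\ast}(z)$ is an honest matrix Laurent polynomial, since the $(1,2)$ block involves a division by $(z^{-1}-1)$. This is precisely where the hypothesis $\B^{\ast}_{12}(1)=0$ enters: it forces $(z-1)$ to divide $\B^{\ast}_{12}(z)$, and since $z^{-1}-1=-(z-1)/z$, the quotient $\B^{\ast}_{12}(z)/(z^{-1}-1)$ is again a Laurent polynomial. The other three blocks are products of Laurent polynomials, so no further conditions on $\B$ are required.

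To finish, I would verify that this $\A$ satisfies the algebraic conditions of \Cref{smooth1}(\ref{partial}): evaluating the blocks at $z=\pm1$ gives $\A^{\ast}_{11}(-1)=\tfrac{1}{2}(-1+1)\B^{\ast}_{11}(-1)=0$, $\A^{\ast}_{21}(-1)=\tfrac{1}{2}(1-1)\B^{\ast}_{21}(-1)=0$ and $\A^{\ast}_{21}(1)=\tfrac{1}{2}(1-1)\B^{\ast}_{21}(1)=0$. Hence \Cref{smooth1}(\ref{partial}) produces a mask $\partial_k\A$, and comparing its defining formula \cref{defD} with the construction of $\A^{\ast}(z)$ shows that every block collapses, so $(\partial_k\A)^{\ast}(z)=\B^{\ast}(z)$, i.e.\ $\partial_k\A=\B$. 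Then \cref{derivedscheme} applied to $\A$ reads $\D_kS_{\A}=\tfrac{1}{2}S_{\partial_k\A}\D_k=\tfrac{1}{2}S_{\B}\D_k$, which is exactly \cref{eqI} with $\mathcal{I}_k\B=\A$. (Equivalently, one could bypass \Cref{smooth1} and check \cref{eqI} from scratch: by \Cref{lem:maskproperties} it is equivalent to the Laurent polynomial identity $\operatorname{diag}((z^{-1}-1)I_k,\,I_{p-k})\,\A^{\ast}(z)=\tfrac{1}{2}\,\B^{\ast}(z)\operatorname{diag}((z^{-2}-1)I_k,\,I_{p-k})$, which follows block by block using $(z^{-1}-1)(z^{-1}+1)=z^{-2}-1$.) I would present the first route in the paper, since it reuses the already established machinery and makes transparent that $\mathcal{I}_k$ is a one-sided inverse of $\partial_k$, just as in the scalar case.
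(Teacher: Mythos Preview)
Your proposal is correct and follows essentially the same approach as the paper: both define $(\mathcal{I}_k\B)^{\ast}(z)$ by the identical explicit block formula, use the hypothesis $\B^{\ast}_{12}(1)=0$ to ensure the $(1,2)$ block is a Laurent polynomial, and then verify \cref{eqI}. The paper's proof is terser (it simply states the formula and says ``it is easy to verify''), whereas you supply the extra check that $\A\in\ell_a^k$ and that $\partial_k\A=\B$; this additional content is in fact stated by the paper immediately after the lemma, so your longer route just folds that observation into the proof itself.
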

\begin{proof}
 Defining 
\begin{equation}\label{defI}
(\mathcal{I}_k \B)^{\ast}(z)=\frac{1}{2}\begin{pmatrix} (z^{-1}+1)\B_{11}^{\ast}(z) & \B_{12}^{\ast}(z)/(z^{-1}-1)\\
                            (z^{-2}-1)\B_{21}^{\ast}(z) &  \B^{\ast}_{22}(z)
                          \end{pmatrix},
\end{equation}
we note that under the condition $\B_{12}^{\ast}(1)=0$, the above matrix is a matrix Laurent polynomial. It is easy to verify that the matrix $(\mathcal{I}_k \B)^{\ast}(z)$ in \cref{defI} satisfies \cref{eqI}.
\end{proof}
\begin{remark}\label{trivialcase}
If $k=p$ in \Cref{smooth2} then $(\mathcal{I}_p\B)^{\ast}(z)=\tfrac{z^{-1}+1}{2}\B^{\ast}(z)$, where $\tfrac{z^{-1}+1}{2}$ is the smoothing factor in the scalar case.
\end{remark}
In \Cref{smooth1} and \Cref{smooth2} we constructed two operators
$\partial_k$ and $\mathcal{I}_k$ operating on masks, which (under some conditions) are inverse to each other. Denote by 
$\ell_a^k$ the set of all masks satisfying the conditions (\ref{partial}) of \Cref{smooth1} and by
$\ell_b^k$ the set of all masks satisfying the condition of \Cref{smooth2}.
Then it is easy to show that
\begin{align}
\partial_k: \quad &\ell_a^k \to \ell_b^k \hspace{2cm}  \mathcal{I}_k: \quad \ell_b^k \to \ell_a^k
\end{align}
and that 
\begin{equation}
\partial_k(\mathcal{I}_k \B)=\B \quad \text{and} \quad \mathcal{I}_k(\partial_k \A)=\A.
\end{equation}
This shows that the condition of \Cref{smooth2} on a mask $\B$ allows to define a mask $\A=\mathcal{I}_k \B$ such that $\partial_k\A=\B$. This anwers question (\ref{que1}). Still we need to deal with question (\ref{que2}).
\begin{remark}
It follows from \Cref{smooth1} and \Cref{smooth2} that the existence of $\partial_k \A$ and $\mathcal{I}_k \B$ depends only on algebraic conditions. Yet this is not sufficient to define a procedure for changing the mask of a {\vs}
in order to get a mask associated with a smoother \vs.
Even if $\mathcal{I}_k\B$ exists for some $k$, the application of \Cref{smoothvector}, in view of \Cref{Vconv}, to $\A=\mathcal{I}_k\B$ is based on the dimension of $\mathcal{E}_{\A}$ which is not necessarily $k$.
But if $\mathcal{E}_{\A}=\operatorname{span}\{e_1,\ldots,e_k\}$, we can conclude from \Cref{smoothvector} that $S_{\A}$ has smoothness increased by $1$ compared to the smoothness of $S_{\B}$.

In the next section we show that if for $\B$ associated with a converging {\vs} $\dim\mathcal{E}_{\A}=k$, then there exists a canonical transformation $\overline{R}$ such that $\overline{\B}=\overline{R}^{-1}\B \overline{R}$
satisfies the algebraic conditions of \Cref{smooth2} and
$\mathcal{E}_{\mathcal{I}_k\B}=\operatorname{span}\{e_1,\ldots,e_k\}$. Therefore by \Cref{smoothvector}, if $S_{\B}$ is $C^{\ell}$, then $S_{\mathcal{I}_k\overline{\B}}$ is $C^{\ell+1}$.
\end{remark}
\subsection{The canonical transformations to the standard basis}\label{sec:transform}
Let $\B$ be a mask of a convergent {\vs} $S_{\B}$. Denote by $k=\dim\mathcal{E}_{\B}$. We define a new mask $\overline{\B}$ such that
\begin{equation}\label{3properties}
\mathcal{E}_{\overline{\B}}=\operatorname{span}\{e_1,\ldots,e_k\}, \, \overline{\B} \in \ell^k_b \text{ and } \mathcal{E}_{\mathcal{I}_k\overline{\B}}=\operatorname{span}\{e_1,\ldots,e_k\}.
\end{equation}
This is achieved by considering the matrix $M_{\B}=\tfrac{1}{2}(B^0+B^1)$. First we state a result of importance to our analysis, which follows from Theorem 2.2 in \citet{cohen96} and from its proof.
\begin{theorem}\label{eigenvaluesmod}
Let $\B$ be a mask of a convergent {\vs}. A basis of $\mathcal{E}_{\B}$ is also a basis of the eigenspace of $M_{\B}=\tfrac{1}{2}(B^0+B^1)$ corresponding to the
eigenvalue $1$. Moreover $\lim_{n \to \infty}M_{\B}^n$ exists.
\end{theorem}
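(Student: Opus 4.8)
The plan is to derive \Cref{eigenvaluesmod} from Theorem~2.2 of \citet{cohen96}, extracting two separate conclusions: the characterization of the eigenspace of $M_{\B}$ for eigenvalue $1$, and the convergence of the matrix powers $M_{\B}^n$. First I would record the algebraic relations $B^0=\tfrac{1}{2}(\B^{\ast}(1)+\B^{\ast}(-1))$ and $B^1=\tfrac{1}{2}(\B^{\ast}(1)-\B^{\ast}(-1))$ established in the proof of \Cref{eigen}, so that $M_{\B}=\tfrac12(B^0+B^1)=\tfrac12\B^{\ast}(1)$. Any $v\in\mathcal{E}_{\B}$ satisfies $\B^{\ast}(1)v=2v$ by \Cref{eigen}, hence $M_{\B}v=v$; this gives one inclusion directly and costs nothing.

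The reverse inclusion --- that $M_{\B}w=w$ forces $w\in\mathcal{E}_{\B}$ --- is the part that genuinely needs the convergence hypothesis on $S_{\B}$, and I expect this to be the main obstacle. The point is that $M_{\B}w=w$ alone only says $\B^{\ast}(1)w=2w$; it does not by itself say anything about $\B^{\ast}(-1)w$, which is the second defining condition of $\mathcal{E}_{\B}$ in \Cref{eigen}. Here one must invoke the structural information from \citet{cohen96}: for a convergent \vs, the joint spectral behaviour of $B^0$ and $B^1$ is constrained so that the eigenvalue $1$ is non-defective and the corresponding eigenspaces of $B^0$, of $B^1$, and of $M_{\B}$ all coincide with $\mathcal{E}_{\B}$ (equivalently, $\B^{\ast}(-1)$ must be nilpotent-free on that subspace / have spectral radius properties forcing $\B^{\ast}(-1)w=0$ whenever $\B^{\ast}(1)w=2w$). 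I would cite Theorem~2.2 of \citet{cohen96} and its proof for exactly this coincidence of eigenspaces, since re-deriving it would require reproducing a substantial chunk of that paper's transition-operator analysis.

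Given the eigenspace identification, the existence of $\lim_{n\to\infty}M_{\B}^n$ follows from the same source: convergence of $S_{\B}$ implies $M_{\B}=\tfrac12\B^{\ast}(1)$ has $1$ as a simple-type (semisimple) eigenvalue and all other eigenvalues strictly inside the unit disk, so $M_{\B}^n$ converges to the spectral projector onto $\mathcal{E}_{\B}$ along the complementary invariant subspace. Concretely I would argue: write $M_{\B}=P+Q$ with $P$ the projection onto $\mathcal{E}_{\B}$ and $Q$ acting on the complement with $\rho(Q)<1$ (the strict inequality and semisimplicity at $1$ being precisely what \citet{cohen96} extracts from $C^0$-convergence); then $M_{\B}^n=P+Q^n\to P$. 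I would end by noting that this limit projector $P$ has range exactly $\mathcal{E}_{\B}=\operatorname{span}$ of any chosen basis of $\mathcal{E}_{\B}$, which is the form in which the theorem will be used in \Cref{sec:transform} to build the canonical change of basis $\overline{R}$.

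Thus the only substantive work beyond bookkeeping is locating and quoting the correct statements inside \citet{cohen96}; the symbolic identities $M_{\B}=\tfrac12\B^{\ast}(1)$ and the forward inclusion $\mathcal{E}_{\B}\subseteq\ker(M_{\B}-I)$ are immediate from \Cref{eigen}, and the geometric-series argument for $\lim M_{\B}^n$ is routine once semisimplicity of the eigenvalue $1$ and $\rho(Q)<1$ are in hand.
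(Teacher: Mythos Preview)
Your proposal is correct and matches the paper's approach exactly: the paper does not give an independent proof of this theorem but simply states that it ``follows from Theorem 2.2 in \citet{cohen96} and from its proof.'' You have merely unpacked what is being cited --- the forward inclusion $\mathcal{E}_{\B}\subseteq\ker(M_{\B}-I)$ via \Cref{eigen}, the reverse inclusion and the spectral structure of $M_{\B}$ from \citet{cohen96} --- which is more detail than the paper itself provides.
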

A direct consequence of the last theorem, concluded from the existence of $\lim_{n \to \infty}M_{\B}^n$, is
\begin{corollary}\label{alggeomulti}
Let $\B$ be a mask associated with a converging {\vs}. Then the algebraic multiplicity of the eigenvalue $1$ of $M_{\B}$ equals its geometric multiplicity, and all its other eigenvalues have modulus less than $1$.
\end{corollary}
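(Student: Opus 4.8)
The plan is to read off the Jordan structure of $M_{\B}$ from the existence of $\lim_{n\to\infty}M_{\B}^n$, which is guaranteed by \Cref{eigenvaluesmod}. First I would pass to the Jordan canonical form: write $M_{\B}=PJP^{-1}$ with $J$ block-diagonal, consisting of Jordan blocks $J_{\lambda,m}=\lambda I_m+N_m$, where $I_m$ is the $(m\times m)$ identity and $N_m$ the nilpotent shift of size $m$. Since $M_{\B}^n=PJ^nP^{-1}$, the sequence $M_{\B}^n$ converges if and only if $J^n$ converges, i.e.\ if and only if $J_{\lambda,m}^n$ converges for every Jordan block appearing in $J$.

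Next I would analyze a single Jordan block via $J_{\lambda,m}^n=\sum_{j=0}^{m-1}\binom{n}{j}\lambda^{n-j}N_m^j$, so that the $(1,1+j)$ entry of $J_{\lambda,m}^n$ equals $\binom{n}{j}\lambda^{n-j}$. Taking $j=0$, convergence of $\lambda^n$ forces $|\lambda|\le 1$, and if $|\lambda|=1$ then the convergence of $\lambda^n$ forces $\lambda=1$. Hence every eigenvalue of $M_{\B}$ of modulus $1$ equals $1$; equivalently every eigenvalue $\lambda\neq 1$ satisfies $|\lambda|<1$. Moreover, if $\lambda=1$ and $m\ge 2$, the $(1,2)$ entry of $J_{1,m}^n$ equals $\binom{n}{1}=n$, which diverges. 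Therefore every Jordan block of $M_{\B}$ associated with the eigenvalue $1$ has size $1$, which is exactly the statement that the algebraic and geometric multiplicities of the eigenvalue $1$ coincide.

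Finally, to see that the eigenvalue $1$ actually occurs, so the claim is not vacuous, I would invoke \Cref{eigenvaluesmod} together with the fact that a convergent {\vs} satisfies $\mathcal{E}_{\B}\neq\{0\}$: a basis of $\mathcal{E}_{\B}$ is a basis of the $1$-eigenspace of $M_{\B}$. I do not expect any real obstacle here; the only point requiring a little care is to apply the binomial-growth argument to the correct off-diagonal entry of $J_{\lambda,m}^n$ and to note that $\binom{n}{j}\lambda^{n-j}$ with $|\lambda|<1$ still tends to $0$ (so the blocks with $|\lambda|<1$ cause no problem). Once \Cref{eigenvaluesmod} supplies the convergence of $M_{\B}^n$, the argument is elementary linear algebra.
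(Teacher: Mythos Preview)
Your proposal is correct and follows exactly the route the paper indicates: the corollary is stated there as a direct consequence of \Cref{eigenvaluesmod}, deduced from the existence of $\lim_{n\to\infty}M_{\B}^n$, and your Jordan-block analysis is precisely the standard way to unpack that implication. The paper gives no further details, so your argument simply fills in the elementary linear algebra it leaves to the reader.
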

In particular, since $M_{\B}=\frac{1}{2}\B^{\ast}(1)$, \Cref{eigenvaluesmod} implies that if $S_{\B}$ is a convergent {\vs}, then $\mathcal{E}_{\B}$ is the eigenspace of $B^{\ast}(1)$ w.r.t.\ to the eigenvalue $2$.

We proceed to define from a mask $\B$ associated with a convergent {\vs}, a new mask $\overline{\B}$ satisfying \cref{3properties}.

Let $\B$ be a mask associated with a convergent {\vs} and let $\mathcal{V}=\{v_1,\ldots,v_k\}$ be a basis of $\mathcal{E}_{\B}$ (and therefore also a basis of the eigenspace w.r.t.\ $1$ of $M_{\B}$).
We define a real matrix 
\begin{equation}\label{R}
\overline{R}=[v_1, \ldots, v_k | Q],
\end{equation}
where the columns of $Q$ span the invariant space of $M_{\B}$ corresponding to the eigenvalues different from 1 of $M_{\B}$.
$Q$ completes $\mathcal{V}$ to a basis of $\mathbb{R}^p$ and $\overline{R}$ is an invertible matrix. We call $\overline{R}$ defined by \cref{R} a canonical transformation. There are many canonical transformations, since $Q$ is not unique. 
Our smoothing procedure is independent of the choice of a canonical transformation.
Define a modified mask $\overline{\B}$ by
\begin{equation}\label{bartransform}
\overline{B}_i=\overline{R}^{-1}B_i\overline{R}, \quad \text{for } i \in \m.
\end{equation}
Then by \cref{R} and \Cref{eigenvaluesmod} we have that $\mathcal{E}_{\overline{\B}}=\operatorname{span}\{e_1,\ldots,e_k\}$. This proves the first claim in \cref{3properties}. Also by \Cref{Vconv}, $S_{\overline{\B}}$
is convergent and has the same smoothness as $S_{\B}$.

Furthermore, by \cref{R},
\begin{equation}\label{jordan}
M_{\overline{\B}}=\tfrac{1}{2}(\overline{B}^0+\overline{B}^1)=\overline{R}^{-1}M_{\B}\overline{R}=\begin{pmatrix}
I_k & 0 \\
0 & J
\end{pmatrix}
\end{equation}
is the Jordan form of $M_{\B}$. By \Cref{alggeomulti}, $J$ has eigenvalues with modulus less than $1$. Transformations $\overline{R}$ which result in representations of $M_{\B}$ similar to the one in \cref{jordan} have already been considered in e.g.\
\citet{cohen96,sauer02}.
The special structure of $M_{\overline{\B}}$ is the key to our smoothing procedure. The next theorem follows from \cref{jordan} and proves the remaining claims of \cref{3properties}.
\begin{theorem}\label{propertiesbar}
Let $S_{\B}$ be a convergent {\vs} and let $k=\dim \mathcal{E}_{\B}$. Define $\overline{\B}$ by \cref{bartransform} with $\overline{R}$ a canonical transformation. Then $\overline{\B}$ has the following properties:
\begin{enumerate}
\item\label{propb} $\overline{\B} \in \ell^k_b$,
\item\label{smoothedeigen} $\mathcal{E}_{\mathcal{I}_k\overline{\B}}=\operatorname{span}\{e_1,\ldots,e_k\}$.
\end{enumerate}
\end{theorem}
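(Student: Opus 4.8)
The plan is to extract both claims from the block-diagonal form \cref{jordan} of $M_{\overline{\B}}$, using \Cref{eigen} to convert statements about the eigenspaces $\mathcal{E}_{(\cdot)}$ into statements about symbols evaluated at $\pm1$. First I would recall that $M_{\overline{\B}}=\tfrac12\overline{\B}^{\ast}(1)$ (the identity used right after \Cref{alggeomulti}), so that \cref{jordan} gives
\[
\overline{\B}^{\ast}(1)=2M_{\overline{\B}}=\begin{pmatrix} 2I_k & 0\\ 0 & 2J\end{pmatrix},
\]
where, by \Cref{alggeomulti}, every eigenvalue of $J$ has modulus strictly less than $1$. Reading off the blocks in the sense of \Cref{blockmatrixnot} yields $\overline{\B}_{11}^{\ast}(1)=2I_k$, $\overline{\B}_{12}^{\ast}(1)=0$, $\overline{\B}_{21}^{\ast}(1)=0$ and $\overline{\B}_{22}^{\ast}(1)=2J$. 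The equality $\overline{\B}_{12}^{\ast}(1)=0$ is precisely the hypothesis of \Cref{smooth2}, so $\overline{\B}\in\ell^k_b$ and $\mathcal{I}_k\overline{\B}$ is well defined; this settles claim~(\ref{propb}).

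For claim~(\ref{smoothedeigen}) I would apply \Cref{eigen} to the mask $\mathcal{I}_k\overline{\B}$, which reduces the task to computing the two matrices $(\mathcal{I}_k\overline{\B})^{\ast}(1)$ and $(\mathcal{I}_k\overline{\B})^{\ast}(-1)$ from the explicit formula \cref{defI}. At $z=1$ the factor $z^{-2}-1$ vanishes, so the lower-left block of $(\mathcal{I}_k\overline{\B})^{\ast}(1)$ is $0$; its upper-left block is $\tfrac12(z^{-1}+1)\overline{\B}_{11}^{\ast}(z)\big|_{z=1}=\overline{\B}_{11}^{\ast}(1)=2I_k$, and its lower-right block is $\tfrac12\overline{\B}_{22}^{\ast}(1)=J$. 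At $z=-1$ both $z^{-1}+1$ and $z^{-2}-1$ vanish, so the entire left block-column of $(\mathcal{I}_k\overline{\B})^{\ast}(-1)$ is $0$, while its lower-right block is $\tfrac12\overline{\B}_{22}^{\ast}(-1)$. The upper-right blocks in both cases are of the form $\overline{\B}_{12}^{\ast}(z)/(z^{-1}-1)$ evaluated at $z=\pm1$; I would point out that these need not be computed, because they will only ever act on a vanishing vector.

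Finally, writing $v=\binom{v_1}{v_2}$ with $v_1\in\mathbb{R}^k$ and $v_2\in\mathbb{R}^{p-k}$, \Cref{eigen} applied to $\mathcal{I}_k\overline{\B}$ says $v\in\mathcal{E}_{\mathcal{I}_k\overline{\B}}$ if and only if $(\mathcal{I}_k\overline{\B})^{\ast}(1)v=2v$ and $(\mathcal{I}_k\overline{\B})^{\ast}(-1)v=0$. From the block shapes above, the lower block of the first equation reads $Jv_2=2v_2$, and since $2$ is not an eigenvalue of $J$ this forces $v_2=0$; conversely, when $v_2=0$ the same block shapes give $(\mathcal{I}_k\overline{\B})^{\ast}(1)v=\binom{2v_1}{0}=2v$ and $(\mathcal{I}_k\overline{\B})^{\ast}(-1)v=0$ at once. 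Hence $\mathcal{E}_{\mathcal{I}_k\overline{\B}}=\operatorname{span}\{e_1,\ldots,e_k\}$, which is claim~(\ref{smoothedeigen}). I do not expect a genuine obstacle: the only subtlety is noticing that the indeterminate $0/0$ entries of \cref{defI} at $z=\pm1$ are irrelevant since they multiply the vanishing component $v_2$, so that the whole argument hinges on the single fact that the bottom-right block $J$ of $(\mathcal{I}_k\overline{\B})^{\ast}(1)$ has all eigenvalues of modulus less than $1$.
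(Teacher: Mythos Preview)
Your proof is correct and follows essentially the same route as the paper's: both arguments read off the block structure of $\overline{\B}^{\ast}(1)$ from \cref{jordan}, apply \Cref{eigen}, and use that $J$ has no eigenvalue equal to $2$. The only cosmetic difference is that the paper factors $\overline{\B}_{12}^{\ast}(z)=(z^{-1}-1)\mathbf{C}^{\ast}(z)$ to write the upper-right blocks of $(\mathcal{I}_k\overline{\B})^{\ast}(\pm1)$ explicitly as $\tfrac12\mathbf{C}^{\ast}(\pm1)$, whereas you (legitimately) bypass this by noting those blocks multiply $v_2=0$; as a small aside, only the evaluation at $z=1$ is an honest $0/0$, since at $z=-1$ the denominator $z^{-1}-1=-2$ is nonzero.
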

\begin{proof}
We start by proving (\ref{propb}).
Since
\begin{equation}\label{Bast1}
\overline{\B}^{\ast}(1)=\overline{B}^0+\overline{B}^1=2M_{\overline{\B}}=2\begin{pmatrix}
I_k & 0 \\
0 & J
\end{pmatrix},
\end{equation}
it follows that $\overline{\B}^{\ast}_{12}(1)=0$. Thus by \Cref{smooth2}, $\overline{\B} \in \ell^k_b$ and therefore
$\mathcal{I}_k\overline{\B}$ exists.

In order to prove (\ref{smoothedeigen}), we use \Cref{eigen} and show that $\mathcal{E}_{\mathcal{I}_k\overline{\B}}=\{v\in \mathbb{R}^p:(\mathcal{I}_k\overline{\B})^{\ast}(1)v=2v \text{ and } \allowbreak
(\mathcal{I}_k\overline{\B})^{\ast}(-1)v=0 \}$ is spanned by $e_1,\ldots,e_k$. Indeed by \cref{Bast1} it follows that $\overline{\B}^{\ast}_{11}(1)=2I_k$ and
$\overline{\B}^{\ast}_{22}(1)=2J$.
Since by \cref{Bast1} $\overline{\B}^{\ast}_{12}(1)=0$, there exists a symbol $\mathbf{C}^{\ast}(z)$ such that $\overline{\B}^{\ast}_{12}(z)=(z^{-1}-1)\mathbf{C}^{\ast}(z)$, and therefore \cref{defI} implies the block form:
\begin{equation}\label{IBbar}
(\mathcal{I}_k\overline{\B})^{\ast}(1)=\begin{pmatrix} 2I_k & \tfrac{1}{2}\mathbf{C}^{\ast}(1) \\[0.2cm]
							0 & J \end{pmatrix}, \quad
(\mathcal{I}_k\overline{\B})^{\ast}(-1)=\begin{pmatrix}0 & \tfrac{1}{2}\mathbf{C}^{\ast}(-1) \\[0.2cm]
							0 & \tfrac{1}{2}\overline{\B}^{\ast}_{22}(-1) \end{pmatrix},
\end{equation}
\Cref{IBbar}, in view of \Cref{eigen}, implies that $\operatorname{span}\{e_1,\ldots,e_k\}=\mathcal{E}_{\mathcal{I}_k\overline{\B}}$, since the eigenspace of $(\mathcal{I}_k\overline{\B})^{\ast}(1)$ w.r.t.\ the eigenvalue $2$ is exactly $\operatorname{span}\{e_1,\ldots,e_k\}$
(the matrix $J$ only contributes eigenvalues with modulus less than $1$), and these vectors are in the kernel of $(\mathcal{I}_k\overline{\B})^{\ast}(-1)$.
\end{proof}

Summarizing the above results, we arrive at
\begin{corollary}\label{cor:tomas}
 Let $\B$ be a mask of a convergent {\vs}, let $k=\dim \mathcal{E}_{\B}$, and let $\overline{\B}$ be as in \Cref{propertiesbar}. Then $\mathcal{I}_k\overline{\B}$ exists and
 \begin{equation*}
   \mathcal{E}_{\mathcal{I}_k\overline{\B}}= \mathcal{E}_{\overline{\B}}=\operatorname{span}\{e_1,\ldots,e_k\}.
 \end{equation*}
\end{corollary}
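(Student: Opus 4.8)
The plan is to assemble the corollary directly from the three properties already established for $\overline{\B}$, so the proof will be short. First I would recall that, by the construction \cref{bartransform} with a canonical transformation $\overline{R}$ as in \cref{R}, the mask $\overline{\B}$ is obtained from $\B$ by the similarity $\overline{B}_i=\overline{R}^{-1}B_i\overline{R}$; since the first $k$ columns of $\overline{R}$ form a basis of $\mathcal{E}_{\B}$, which by \Cref{eigenvaluesmod} coincides with the eigenspace of $M_{\B}$ for the eigenvalue $1$, the change of basis sends $\mathcal{E}_{\B}$ to $\operatorname{span}\{e_1,\ldots,e_k\}$; hence $\mathcal{E}_{\overline{\B}}=\operatorname{span}\{e_1,\ldots,e_k\}$. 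By \Cref{Vconv}, $S_{\overline{\B}}$ is again a convergent {\vs} (of the same smoothness as $S_{\B}$), so every statement below that presupposes convergence applies to $\overline{\B}$.

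Next I would invoke \Cref{propertiesbar}. Part (\ref{propb}) gives $\overline{\B}\in\ell^k_b$; by the construction in \Cref{smooth2} this means precisely that the matrix Laurent series in \cref{defI} is in fact a matrix Laurent polynomial, so the mask $\mathcal{I}_k\overline{\B}$ is well defined and satisfies \cref{eqI}. Part (\ref{smoothedeigen}) then gives $\mathcal{E}_{\mathcal{I}_k\overline{\B}}=\operatorname{span}\{e_1,\ldots,e_k\}$. Combining these with the identity $\mathcal{E}_{\overline{\B}}=\operatorname{span}\{e_1,\ldots,e_k\}$ from the first step yields the chain $\mathcal{E}_{\mathcal{I}_k\overline{\B}}=\mathcal{E}_{\overline{\B}}=\operatorname{span}\{e_1,\ldots,e_k\}$, which is exactly the assertion.

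Since the corollary is essentially a bookkeeping statement, there is no genuine obstacle in its own proof; the real content sits in \Cref{propertiesbar}, whose proof in turn rests on the block structure \cref{jordan} of $M_{\overline{\B}}$. The only point worth stressing is that $k$ must be taken as $\dim\mathcal{E}_{\B}$ (equivalently $\dim\mathcal{E}_{\overline{\B}}$): this is what forces the block $\overline{\B}^{\ast}_{12}(1)$ to vanish and makes the residual Jordan block $J$ contribute only eigenvalues of modulus $<1$ (by \Cref{alggeomulti}), so that the eigenspace of $(\mathcal{I}_k\overline{\B})^{\ast}(1)$ for the eigenvalue $2$ is exactly $\operatorname{span}\{e_1,\ldots,e_k\}$ and lies in the kernel of $(\mathcal{I}_k\overline{\B})^{\ast}(-1)$. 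I would therefore write the proof as a one-line citation of \cref{R}, \Cref{eigenvaluesmod}, \Cref{Vconv} and \Cref{propertiesbar}, with a short remark recording why this choice of $k$ is the one dictated by convergence.
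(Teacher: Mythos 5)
Your proof is correct and matches the paper's intent exactly: the paper states this corollary without a separate proof, introducing it with ``Summarizing the above results,'' since it follows immediately from the construction of $\overline{\B}$ via \cref{R} and \Cref{eigenvaluesmod} (giving $\mathcal{E}_{\overline{\B}}=\operatorname{span}\{e_1,\ldots,e_k\}$) together with parts (\ref{propb}) and (\ref{smoothedeigen}) of \Cref{propertiesbar}. Your assembly of these same ingredients, including the remark that the real content lies in \Cref{propertiesbar} and the choice $k=\dim\mathcal{E}_{\B}$, is precisely the paper's route.
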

\subsection{A procedure for increasing the smoothness}
\Cref{propertiesbar} allows us to define the following procedure which generates {\vs}s of higher smoothness from given convergent {\vs}s:
\begin{alg}\label{algorithm}
The input data is a mask $\B$ associated with a $C^{\ell}$ {\vs}, $\ell \geq 0$, and the output is a mask $\A$ associated with a $C^{\ell+1}$ {\vs}.
\begin{enumerate}
\item\label{0} Choose a basis $\mathcal{V}$ of $\mathcal{E}_{\B}$ and define $\overline{R}$, a canonical transformation, as in \cref{R}.
\item\label{1} Define $\overline{\B}=\overline{R}^{-1}\B \overline{R}$.
\item Define $k=\operatorname{dim}(\mathcal{E}_{\B})$.
\item\label{2} Define $\overline{\A}=\mathcal{I}_k\overline{\B}$ as in \cref{defI}.
\item\label{3} Define $\A=\overline{R}\,\overline{\A}\, \overline{R}^{-1}$.
\end{enumerate}
\end{alg}
A schematic representation of \Cref{algorithm} is given in \Cref{figvectorsmooth}.
\begin{remark}\label{rem:span}
Step \ref{3} in \Cref{algorithm} is not essential. The scheme $S_{\overline{\A}}$ is already $C^{\ell+1}$. Step \ref{3} guarantees that $\mathcal{E}_{\A}=\mathcal{E}_{\B}$.
In both cases to apply another smoothing procedure to get a $C^{\ell+2}$ {\vs}, a new canonical transformation has to be applied.
\end{remark}
In the notation of \Cref{algorithm}, we define the smoothing operator $\mathcal{I}_k$ applied to a mask $\B$ of a convergent {\vs} as
\begin{equation}\label{vec:soperator}
 \mathcal{I}_k\B=\overline{R}(\mathcal{I}_k\overline{\B})\overline{R}^{-1}.
\end{equation}
This is a generalization of the smoothing operator in the case of scalar subdivision schemes.

An important property of \Cref{algorithm}, which is easily seen from \cref{defI} is,
\begin{corollary}\label{vectorsupport}
Assume that $\B$ and $\A$ are masks as in \Cref{algorithm}. If the support of $\B$ is contained in $[-N_1,N_2]$ with $N_1,N_2 \in \mathbb{N}$,
then the support of $\A$ is contained in $[-N_1-2,N_2]$.
\end{corollary}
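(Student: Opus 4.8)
The plan is to track the support of the mask through the steps of \Cref{algorithm}. Steps~\ref{1} and~\ref{3} are conjugations by the fixed invertible matrix $\overline{R}$, namely $\overline{B}_i=\overline{R}^{-1}B_i\overline{R}$ and $A_i=\overline{R}\,\overline{A}_i\,\overline{R}^{-1}$, so $B_i=0$ if and only if $\overline{B}_i=0$, and $A_i=0$ if and only if $\overline{A}_i=0$. Hence these steps preserve the support exactly, and it follows that $\operatorname{supp}(\overline{\B})=\operatorname{supp}(\B)\subseteq[-N_1,N_2]$ and $\operatorname{supp}(\A)=\operatorname{supp}(\overline{\A})$. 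The whole claim therefore reduces to showing that $\operatorname{supp}(\mathcal{I}_k\overline{\B})\subseteq[-N_1-2,N_2]$, i.e.\ to analysing step~\ref{2}.

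To see this I would read off the support of each of the four blocks of $(\mathcal{I}_k\overline{\B})^{\ast}(z)$ directly from formula~\cref{defI}, using that each block of $\overline{\B}^{\ast}(z)$ is a Laurent polynomial supported in $[-N_1,N_2]$. Multiplying such a polynomial by $\tfrac12(z^{-1}+1)$ yields one supported in $[-N_1-1,N_2]$ (this is the $(1,1)$ block); multiplying by $\tfrac12(z^{-2}-1)$ yields one supported in $[-N_1-2,N_2]$ (the $(2,1)$ block, which is the binding one); the $(2,2)$ block $\tfrac12\overline{\B}^{\ast}_{22}(z)$ is unchanged. The only block that is not a bare multiplication is $(1,2)$: there I would use that $\overline{\B}\in\ell^k_b$ (part~\ref{propb} of \Cref{propertiesbar}), which by \Cref{smooth2} means $\overline{\B}^{\ast}_{12}(1)=0$, so $\overline{\B}^{\ast}_{12}(z)=(z^{-1}-1)\mathbf{C}^{\ast}(z)$ for a Laurent polynomial $\mathbf{C}^{\ast}$; comparing the lowest and highest powers of $z$ on both sides shows that $\mathbf{C}^{\ast}$ is supported in $[-N_1+1,N_2]$, and this $\mathbf{C}^{\ast}$ is exactly $\overline{\B}^{\ast}_{12}(z)/(z^{-1}-1)$.

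Finally I would take the union of the four block supports, which is $[-N_1-2,N_2]$, the left endpoint being contributed (at worst) by the $(2,1)$ block; by the first paragraph this is also the support bound for $\A$. I do not anticipate any real obstacle: the argument is an elementary inspection of~\cref{defI}, and the one spot where a previously established fact rather than a direct computation is needed is the division by $(z^{-1}-1)$ in the $(1,2)$ block, which is licit precisely because $\overline{\B}^{\ast}_{12}(1)=0$ as guaranteed by \Cref{propertiesbar}.
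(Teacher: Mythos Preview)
Your proposal is correct and follows exactly the approach the paper indicates: the corollary is stated there as ``easily seen from \cref{defI}'', and your argument carries out precisely that inspection, together with the (trivial but necessary) observation that the conjugations in steps~\ref{1} and~\ref{3} preserve the support. Your treatment of the $(1,2)$ block, invoking $\overline{\B}^{\ast}_{12}(1)=0$ from \Cref{propertiesbar} to justify the division, is the one place requiring care, and you handle it correctly.
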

Therefore \Cref{algorithm} increases the support length by at most $2$, independently of the dimension of the mask. Recall that in the scalar case the support size is increased by 1.

An interesting observation follows from \Cref{algorithm}, \cref{Bast1} and \cref{IBbar},
\begin{corollary}\label{cor:eigenvalues}
Assume that $\A, \B$ are masks as in \Cref{algorithm}. 
Then $\A^{\ast}(1)$ and $\B^{\ast}(1)$ share the eigenvalue $2$ and the corresponding eigenspace. To each eigenvalue $\lambda \neq 2$ of $\B^{\ast}(1)$ there is an eigenvalue $\frac{1}{2}\lambda$ of $\A^{\ast}(1)$.
\end{corollary}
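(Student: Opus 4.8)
The plan is to read off the spectral relationship directly from the block-triangular representation of $(\mathcal{I}_k\overline{\B})^{\ast}(1)$ established in \cref{IBbar}, and then transport it back through the canonical transformation. First I would recall from Step~\ref{1} and Step~\ref{3} of \Cref{algorithm} that $\overline{\B}=\overline{R}^{-1}\B\overline{R}$ and $\A=\overline{R}\,\overline{\A}\,\overline{R}^{-1}$ with $\overline{\A}=\mathcal{I}_k\overline{\B}$, so that $\B^{\ast}(1)$ is similar to $\overline{\B}^{\ast}(1)$ and $\A^{\ast}(1)$ is similar to $(\mathcal{I}_k\overline{\B})^{\ast}(1)$; hence it suffices to compare the spectra of $\overline{\B}^{\ast}(1)$ and $(\mathcal{I}_k\overline{\B})^{\ast}(1)$, and similarity preserves eigenvalues and (up to the transformation $\overline{R}$) eigenspaces.

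Next I would invoke \cref{Bast1}, which gives $\overline{\B}^{\ast}(1)=2\begin{pmatrix} I_k & 0 \\ 0 & J\end{pmatrix}$, so the eigenvalues of $\B^{\ast}(1)$ are exactly $2$ (with multiplicity $k$, eigenspace $\operatorname{span}\{e_1,\ldots,e_k\}$ in the barred coordinates) together with $2\mu$ for each eigenvalue $\mu$ of $J$. On the other hand, \cref{IBbar} exhibits
\begin{equation*}
(\mathcal{I}_k\overline{\B})^{\ast}(1)=\begin{pmatrix} 2I_k & \tfrac{1}{2}\mathbf{C}^{\ast}(1) \\[0.2cm] 0 & J \end{pmatrix},
\end{equation*}
which is block upper-triangular, so its eigenvalues are those of $2I_k$, namely $2$ with multiplicity $k$, together with those of $J$, namely the $\mu$'s. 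Comparing the two lists: the eigenvalue $2$ appears in both (this is the common eigenvalue $2$), and every other eigenvalue of $\B^{\ast}(1)$ has the form $\lambda=2\mu$ for some eigenvalue $\mu$ of $J$, which is then an eigenvalue $\tfrac{1}{2}\lambda=\mu$ of $(\mathcal{I}_k\overline{\B})^{\ast}(1)$, hence of $\A^{\ast}(1)$.

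For the eigenspace statement, I would note that \Cref{propertiesbar}(\ref{smoothedeigen}) together with \Cref{eigen} already identifies the eigenspace of $(\mathcal{I}_k\overline{\B})^{\ast}(1)$ w.r.t.\ the eigenvalue $2$ as $\operatorname{span}\{e_1,\ldots,e_k\}$ (the $J$-block contributes only eigenvalues of modulus less than $1$, so no extra contribution to the eigenvalue $2$ can occur even through the coupling block $\tfrac12\mathbf{C}^{\ast}(1)$). Since the eigenspace of $\overline{\B}^{\ast}(1)$ w.r.t.\ $2$ is the same $\operatorname{span}\{e_1,\ldots,e_k\}$ by \cref{Bast1}, applying the similarity $\overline{R}$ shows that $\A^{\ast}(1)$ and $\B^{\ast}(1)$ share the eigenvalue $2$ together with its eigenspace $\overline{R}\,\operatorname{span}\{e_1,\ldots,e_k\}=\mathcal{E}_{\B}$. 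The only mild subtlety — and the one place one must be slightly careful rather than purely mechanical — is that the coupling block $\tfrac{1}{2}\mathbf{C}^{\ast}(1)$ could in principle perturb the eigenspace structure; but because the triangular form decouples the characteristic polynomial and because $2$ is not an eigenvalue of $J$ (all eigenvalues of $J$ have modulus $<1$ by \Cref{alggeomulti}), the eigenvalue $2$ of the triangular matrix has its full generalized eigenspace confined to the first $k$ coordinates, and in fact \Cref{propertiesbar}(\ref{smoothedeigen}) already records exactly this. The remaining verifications are bookkeeping about similarity and reading off a triangular spectrum, so no genuine obstacle arises.
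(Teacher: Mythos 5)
Your proposal is correct and follows essentially the same route the paper intends: the corollary is stated there as an immediate consequence of \Cref{algorithm}, \cref{Bast1} and \cref{IBbar}, i.e.\ the similarity of $\B^{\ast}(1)$ to $2\bigl(\begin{smallmatrix} I_k & 0 \\ 0 & J\end{smallmatrix}\bigr)$ and of $\A^{\ast}(1)$ to the block upper-triangular matrix in \cref{IBbar}, exactly as you spell out. Your additional remark that the coupling block cannot affect the eigenspace for the eigenvalue $2$ because $J$ has spectrum inside the unit disk is the right (and only) point needing care, and it matches the reasoning already recorded in \Cref{propertiesbar}.
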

Note that a similar result to that in \Cref{cor:eigenvalues} is in general not true for $\B^{\ast}(-1)$ and $\A^{\ast}(-1)$. However, \Cref{ex:doubleknot} shows that this can well be the case.

\begin{figure}
\centering
\begin{tikzpicture}[scale=4.5]
\node (A) at (-0.1,1) {$S_{\B} \in C^{\ell}$};
\node (B) at (1.4,1) {$S_{\overline{\B}} \in C^{\ell}$};
\node (C) at (-0.1,0.45) {$S_{\A} \in C^{\ell+1}$};
\node (D) at (1.4,0.45) {$S_{\overline{\A}} \in C^{\ell+1}$};
\node (E) at (-0.2,0.97) {\phantom{a}};
\path[->,font=\scriptsize]
(A) edge node[above]{$\overline{\B}=R^{-1}\B R$} (B)
(B) edge node[right]{$\overline{\A}=\mathcal{I}_k\overline{\B}$} (D)
(D) edge node[above]{$\A=R\overline{\A}R^{-1}$} (C);
\end{tikzpicture}
\caption{A schematic representation of \Cref{algorithm}.}
\label{figvectorsmooth}
\end{figure}
\begin{example}[Double-knot cubic spline subdivision]\label{ex:doubleknot}
We consider the {\vs} with symbol
\begin{equation}\label{ex:mask}
\B^{\ast}(z)=\frac{1}{8}\begin{pmatrix}
2+6z+z^2 & 2z+5z^2 \\
5+2z & 1+6z+2z^2
\end{pmatrix}.
\end{equation}
It is known that this scheme produces $C^1$ limit curves (see e.g.\ \citet{dyn02}). We apply \Cref{algorithm} to $\B$ to obtain a {\vs} $S_{\A}$ of regularity $C^2$:
\begin{enumerate}
\item\label{a} First we find a basis of $\mathcal{E}_{\B}$ in order to compute a canonical transformation $\overline{R}$.
The matrices $\B^{\ast}(1)$ and $\B^{\ast}(-1)$ are given by
\begin{equation*}
\B^{\ast}(1)=\frac{1}{8}\begin{pmatrix}
9 & 7 \\
7 & 9
\end{pmatrix}, \quad
\B^{\ast}(-1)=\frac{1}{8}\Big(\begin{array}{r r}
-3 & 3 \\
3 & -3
\end{array}\Big)
\end{equation*}
and have the following eigenvalues and eigenvectors
\begin{align}\label{ex:equ}
& \text{For } \B^{\ast}(1): \quad \text{eigenvalues}: 2, \tfrac{1}{4},\quad  \text{eigenvectors: }\Big(\begin{array}{r r} 1 \\ 1 \end{array}\Big), \Big(\begin{array}{r r} -1 \\ 1 \end{array}\Big), \text{ resp.\ }\\ \nonumber
& \text{For } \B^{\ast}(-1):  \quad \text{eigenvalues}: 0, -\tfrac{3}{4}, \quad  \text{eigenvectors: }\Big(\begin{array}{r r} 1 \\ 1 \end{array}\Big), \Big(\begin{array}{r r} -1 \\ 1 \end{array}\Big), \text{ resp.\ }
\end{align}
Therefore $\mathcal{E}_{\B}$ is spanned by ${1 \choose 1}$. The transformation $\overline{R}$ is determined by the eigenvectors of $\B^{\ast}(1)$:
\begin{equation*}
R=\Big(\begin{array}{r r}
1 & -1 \\
1 & 1
\end{array}\Big), \quad
R^{-1}=\tfrac{1}{2}\Big(\begin{array}{r r}
1 & 1 \\
-1 & 1
\end{array}\Big).
\end{equation*}
\item We continue by computing $\overline{\B}=\overline{R}^{-1}\B \overline{R}$ from the symbol of $\B$ in \cref{ex:mask}, and get
\begin{equation*}
\overline{\B}^{\ast}(z)=\frac{1}{8}\begin{pmatrix}
4(1+z)^2 & 3(z^2-1) \\
-2(z^2-1) & -1+4z-z^2
\end{pmatrix}.
\end{equation*}
\item From Step \ref{a} we see that $k=\dim \mathcal{E}_{\B}=1$.
\item We compute $\overline{\A}=\mathcal{I}_1\overline{\B}$ by computing its symbol.
\begin{equation*}
\overline{\A}^{\ast}(z)=\frac{1}{16}\begin{pmatrix}
4z^{-1}(1+z)^3 & -3z^{-1}(z+1) \\
2z^{-2}(z^2-1)^2 & -1+4z-z^2
\end{pmatrix}.
\end{equation*}
\item In this step we transform back to the original basis $\A=\overline{R}\,\overline{\A}\,\overline{R}^{-1}$, by deriving $\A^{\ast}(z)$.
\begin{equation}\label{ex:vectA}
\A^{\ast}(z)=\frac{1}{32}z^{-2}\begin{pmatrix}
z^4+16z^3+18z^2+7z-2 & \enspace 3z^4+8z^3+14z^2+z-2 \\[0.3cm]
7z^4+8z^3+12z^2+7z+2 & \enspace 5z^4+16z^3+4z^2+z+2
\end{pmatrix}.
\end{equation}
\end{enumerate}
It follows from the analysis preceeding \Cref{algorithm} that $S_{\A}$ is $C^2$.

To verify \Cref{rem:span} we show that $\mathcal{E}_{\A}$ has the same basis as $\mathcal{E}_{\B}$. We compute 
\begin{equation*}
\A^{\ast}(1)=\frac{1}{8}\Big(\begin{array}{r r}
10 & 6 \\[0.1cm]
9 & 7
\end{array}\Big), \quad
\A^{\ast}(-1)=\frac{1}{16}\Big(\begin{array}{r r}
-3 & 3 \\
3 & -3
\end{array}\Big)
\end{equation*}
and their eigenvalues and eigenvectors:
\begin{align}\label{ex:equ2}
&\text{For } \A^{\ast}(1):  \quad \text{eigenvalues}: 2, \tfrac{1}{8},\quad  \text{eigenvectors: }\Big(\begin{array}{r r} 1 \\ 1 \end{array}\Big), \Big(\begin{array}{r r} -2 \\ 3 \end{array}\Big), \text{ resp.\ }\\ \nonumber
&\text{For } \A^{\ast}(-1):  \quad \text{eigenvalues}: 0, -\tfrac{3}{8}, \quad  \text{eigenvectors: }\Big(\begin{array}{r r} 1 \\ 1 \end{array}\Big), \Big(\begin{array}{r r} -1 \\ 1 \end{array}\Big),\text{ resp.\ }
\end{align}
Therefore by \cref{ex:equ}, \cref{ex:equ2} and \Cref{eigen}, $\mathcal{E}_{\A}$ and $\mathcal{E}_{\B}$ are spanned by ${1 \choose 1}$.

Note that the eigenvectors corresponding to the eigenvalues which have modulus less than $1$, of $M_{\A}$ and $M_{\B}$ are different. Thus in order to generate a $C^3$ scheme from $S_{\A}$, a new canonical transformation has to be computed.

Also, comparing the eigenvalues of $\A^{\ast}(1)$ and $\B^{\ast}(1)$ we see that \Cref{cor:eigenvalues} is satisfied. In fact in this example, also the eigenvalues of $\A^{\ast}(-1)$ and $\B^{\ast}(-1)$ have the same property. 

It is easy to see from \cref{ex:vectA} that the support of the mask $\A$ is 4, and from \cref{ex:mask} that the support of $\B$ is 2, in accordance with \Cref{vectorsupport}.
\end{example}

\section{Increasing the smoothness of Hermite subdivision schemes}\label{sec:hermite}
In this section we describe a procedure for increasing the smoothness of {\hs}s refining function and first derivative values, based on the procedure for the vector case described in \Cref{sec:vector}.
We consider {\hs}s which operate on data $\bc \in \ell(\mathbb{R}^2)$, using the notation of \Cref{background}.

\subsection{Algebraic conditions}
As in the vector case, {\hs}s use matrix-valued masks $\A=\{A_i \in \mathbb{R}^{2\times 2}: i\in \m \}$ and subdivision operators $S_{\A}$ as defined in \cref{matrixoperator}. 
The input data $\bc^0\in \ell(\mathbb{R}^2)$ is refined via $D^n\bc^n=S_{\A}^n\bc^0$, where $D$ is the dilation matrix
\begin{equation*}
D=\begin{pmatrix} 1 & 0 \\ 0 & \frac{1}{2} \end{pmatrix}.
\end{equation*}
A {\hs} is called \emph{interpolatory} if its mask $\A$ satisfies $A_0=D$ and $A_{2i}=0$ for all $i \in \m \backslash \{0\}$.

We always assume that a {\hs} satisfies the \emph{spectral condition} \citep{dubuc09}. This condition requires that there is $\varphi \in \mathbb{R}$ such that both the constant sequence 
${\mathbf{k}}=\{\bigl(\begin{smallmatrix} 1 \\ 0 \end{smallmatrix}\bigr): i \in \m\}$
and the linear sequence
${\Bell}=\{\bigl(\begin{smallmatrix} i +\varphi \\ 1 \end{smallmatrix}\bigr): i \in \m \}$ obey the rule
\begin{equation}\label{spectral}
S_{\A}{\mathbf{k}}={\mathbf{k}}, \quad S_{\A}{\Bell}=\tfrac{1}{2}{ {\Bell}}.
\end{equation}
The spectral condition is crucial for the convergence and smoothness analysis of linear {\hs}s.
If the {\hs} is interpolatory we can choose $\varphi=0$.

We now characterize the spectral condition in terms of the symbol of the mask $\A$. We introduce the notation
\begin{equation}\label{blockmatrix}
\A=\begin{pmatrix}\ab_{11} & \ab_{12} \\ \ab_{21} & \ab_{22}\end{pmatrix},
\end{equation}
where $\ab_{ij} \in \ell(\mathbb{R})$ for $i,j \in \{1,2\}$.
It is easy to verify that the spectral condition in \cref{spectral} is equivalent to the algebraic conditions in the next lemma.
\begin{lemma}\label{spectralcond}
A mask $\A$ satisfies the spectral condition given by \cref{spectral} with $\varphi \in \mathbb{R}$ if and only if its symbol $\A^{\ast}(z)$ satisfies
\begin{enumerate}
\item\label{spectral11} $\ab^{\ast}_{11}(1)=2,\: \ab^{\ast}_{11}(-1)=0$.
\item\label{spectral21} $\ab^{\ast}_{21}(1)=0,\: \ab^{\ast}_{21}(-1)=0$.
\item\label{spectral11mix} ${\ab^{\ast}_{11}}'(1)-2\ab^{\ast}_{12}(1)=2\varphi,\: {\ab^{\ast}_{11}}'(-1)+2\ab^{\ast}_{12}(-1)=0$.
\item\label{spectral21mix} ${\ab^{\ast}_{21}}'(1)-2\ab^{\ast}_{22}(1)=-2,\: {\ab^{\ast}_{21}}'(-1)+2\ab^{\ast}_{22}(-1)=0.$
\end{enumerate}
\end{lemma}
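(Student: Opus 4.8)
The plan is to translate the two defining relations of the spectral condition, $S_{\A}{\mathbf{k}}={\mathbf{k}}$ and $S_{\A}{\Bell}=\tfrac12{\Bell}$, into statements about generating functions via \Cref{lem:maskproperties}, and then read off the scalar identities (\ref{spectral11})--(\ref{spectral21mix}) by evaluating at $z=1$ and $z=-1$. The point is that $S_{\A}{\mathbf{k}}={\mathbf{k}}$ and $S_{\A}{\Bell}=\tfrac12{\Bell}$ are equalities of sequences; using $(S_{\A}\bc)^{\ast}(z)=\A^{\ast}(z)\bc^{\ast}(z^2)$, each becomes an equality of (formal Laurent) generating functions, and since these involve the sequences $\mathbf{k}$ and $\Bell$ which are not finitely supported, one must be a little careful: rather than manipulating $\mathbf{k}^{\ast}(z)$ and $\Bell^{\ast}(z)$ directly, I will apply the difference operator $\D$ first. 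Note $\D\mathbf{k}=0$ and $\D\Bell=\mathbf{k}$, while $S_{\A}$ commutes with nothing in particular — but the identities $S_{\A}\mathbf{k}=\mathbf{k}$, $S_{\A}\Bell=\tfrac12\Bell$ give $\D S_{\A}\mathbf{k}=0$ and $\D S_{\A}\Bell=\tfrac12\mathbf{k}$, and now $\D S_{\A}\mathbf{k}$, $\D S_{\A}\Bell$ have finitely supported... no — they don't, since $\mathbf{k},\Bell$ aren't finite. The cleaner route is to work componentwise and directly expand the convolution sums defining $(S_{\A}\mathbf{k})_i$ and $(S_{\A}\Bell)_i$.

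Concretely: writing $\A$ in the block form \cref{blockmatrix}, the relation $S_{\A}\mathbf{k}=\mathbf{k}$ says, for each $i$, that $\sum_{j}A_{i-2j}\binom{1}{0}=\binom{1}{0}$, i.e.\ $\sum_j (\ab_{11})_{i-2j}=1$ and $\sum_j(\ab_{21})_{i-2j}=0$ for all $i\in\m$. Splitting according to the parity of $i$ and using the formulas in \Cref{lem:maskproperties} for $\bc^{\ast}(\pm1)$ gives exactly $\ab^{\ast}_{11}(1)=\ab^{\ast}_{11}(-1)+2=2$ rearranged as (\ref{spectral11}) and $\ab^{\ast}_{21}(1)=\ab^{\ast}_{21}(-1)=0$, which is (\ref{spectral21}). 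Similarly, $S_{\A}\Bell=\tfrac12\Bell$ reads, for each $i$, $\sum_j A_{i-2j}\binom{j+\varphi}{1}=\tfrac12\binom{i+\varphi}{1}$; the second component gives $\sum_j(\ab_{21})_{i-2j}(j+\varphi)+\sum_j(\ab_{22})_{i-2j}=\tfrac12$ and the first gives $\sum_j(\ab_{11})_{i-2j}(j+\varphi)+\sum_j(\ab_{12})_{i-2j}=\tfrac12(i+\varphi)$. Using $\sum_j (\ab)_{i-2j}\,j = \tfrac12\big(\sum_j(\ab)_{i-2j}(i-2j) - i\sum_j(\ab)_{i-2j}\big)\cdot(-1)+\dots$ — more simply, substitute $\ell=i-2j$ so that $j=(i-\ell)/2$ and express $\sum_\ell \ab_\ell\binom{(i-\ell)/2+\varphi}{1}$; separating $\ell$ even/odd and invoking both the value formulas $\ab^{\ast}(\pm1)$ and the derivative formulas ${\ab^{\ast}}'(\pm1)$ from \Cref{lem:maskproperties} yields, after collecting the parts independent of $i$ (which produce the derivative/value combinations) and equating, precisely the two equations in (\ref{spectral11mix}) and the two in (\ref{spectral21mix}). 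The converse direction runs the same computation backwards: assuming the eight scalar identities and reconstructing the four convolution sums for every parity of $i$ recovers $S_{\A}\mathbf{k}=\mathbf{k}$ and $S_{\A}\Bell=\tfrac12\Bell$.

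The main obstacle is purely bookkeeping: correctly extracting the coefficient of $i$ and the constant term when the variable $j$ (equivalently $\ell=i-2j$) is split by parity, and making sure the $\varphi$-dependent pieces land only in (\ref{spectral11mix}) and not elsewhere — in particular checking that the $i$-linear parts automatically match because of (\ref{spectral11}) and (\ref{spectral21}), so that only the constant parts impose new conditions. I would handle this by writing $i+\varphi = (i-\ell) + (\ell+\varphi)$ inside each sum, so the $(i-\ell)$ part pairs with $j=(i-\ell)/2$ cleanly and the $(\ell+\varphi)$ part produces ${\ab^{\ast}}'(\pm1)$ and $\varphi\,\ab^{\ast}(\pm1)$ terms; then the already-established parity-value identities for $\ab_{11},\ab_{21}$ collapse the $i$-dependence, leaving the stated relations among ${\ab^{\ast}_{11}}'(\pm1), \ab^{\ast}_{12}(\pm1), {\ab^{\ast}_{21}}'(\pm1), \ab^{\ast}_{22}(\pm1)$. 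No step is conceptually deep; the care is entirely in the parity split and the separation of the $i$-linear and constant terms.
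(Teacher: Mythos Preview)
Your proposal is correct and is precisely the kind of direct verification the paper has in mind: the paper does not give a proof of this lemma at all, stating only that ``it is easy to verify'' the equivalence. Your final approach---expand $(S_{\A}\mathbf{k})_i$ and $(S_{\A}\Bell)_i$ componentwise, split by the parity of $i$, and identify the resulting even/odd subsums with the expressions for $\ab^{\ast}(\pm1)$ and ${\ab^{\ast}}'(\pm1)$ from \Cref{lem:maskproperties}---is exactly the computation being deferred, and your diagnosis that the $i$-linear parts cancel automatically once (\ref{spectral11}) and (\ref{spectral21}) are in hand is right.

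One small comment: the opening detour through $(S_{\A}\bc)^{\ast}(z)=\A^{\ast}(z)\bc^{\ast}(z^2)$ and then through $\D$ is unnecessary, as you yourself realize midway. For the constant sequence $\mathbf{k}$ you can bypass the parity bookkeeping entirely by observing that $S_{\A}\mathbf{k}=\mathbf{k}$ is equivalent to $A^0e_1=e_1$ and $A^1e_1=e_1$, hence by \Cref{eigen} to $\A^{\ast}(1)e_1=2e_1$ and $\A^{\ast}(-1)e_1=0$, which gives (\ref{spectral11}) and (\ref{spectral21}) in one line. The linear sequence $\Bell$ genuinely needs the parity split and the derivative identities, and there your treatment is fine.
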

Parts (\ref{spectral11}) and (\ref{spectral21}) relate to the reproduction of constants, whereas parts (\ref{spectral11mix}) and (\ref{spectral21mix}) are related to the reproduction of linear functions.

Next we cite results on $HC^{\ell}$ smoothness of {\hs}.
Consider the \emph{Taylor operator} $T$, first introduced in \citet{merrien12}:
\begin{equation*}
T=\Big(\begin{array}{r r} \Delta & -1 \\ 0 & 1 \end{array}\Big).
\end{equation*}
The Taylor operator is a natural analogue of the operator $\D_k$ for {\vs}s and the forward difference operator $\D$ in scalar subdivision. We have the following result analogous to \cref{derivedscheme}:
\begin{lemma}[\citet{merrien12}]\label{existtaylor}
If the {\hs} associated with a mask $\A$ satisfies the spectral condition of \cref{spectral}, then there exists a matrix mask of dimension $2$, $\partial_{t}\A$, such that
\begin{equation}\label{derivedtaylor}
TS_{\A}=\tfrac{1}{2}S_{\partial_t\A}T.
\end{equation}
The mask $\partial_t \A$ determines a {\vs} called the \emph{Taylor scheme} associated with ${\A}$.
\end{lemma}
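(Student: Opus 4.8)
The plan is to pass to generating functions, solve for the symbol of $\partial_t\A$ explicitly, and then check that it really is a matrix Laurent polynomial. First I would record the symbol of the Taylor operator: writing an input sequence as $\bc=\bigl(\begin{smallmatrix}\bc^{(1)}\\ \bc^{(2)}\end{smallmatrix}\bigr)$ and applying \Cref{lem:maskproperties} componentwise, one gets $(T\bc)^{\ast}(z)=T^{\ast}(z)\,\bc^{\ast}(z)$ with $T^{\ast}(z)=\bigl(\begin{smallmatrix} z^{-1}-1 & -1 \\ 0 & 1\end{smallmatrix}\bigr)$. Combining this with $(S_{\A}\bc)^{\ast}(z)=\A^{\ast}(z)\bc^{\ast}(z^2)$ from the same lemma, the left-hand side of \cref{derivedtaylor}, applied to an arbitrary $\bc$, has symbol $T^{\ast}(z)\A^{\ast}(z)\bc^{\ast}(z^2)$, while the right-hand side has symbol $\tfrac12(\partial_t\A)^{\ast}(z)\,T^{\ast}(z^2)\,\bc^{\ast}(z^2)$. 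Since $\bc$ is arbitrary (and finitely supported sequences already separate Laurent polynomials), \cref{derivedtaylor} is equivalent to the matrix symbol identity $T^{\ast}(z)\A^{\ast}(z)=\tfrac12(\partial_t\A)^{\ast}(z)\,T^{\ast}(z^2)$.

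Next, because $\det T^{\ast}(z^2)=z^{-2}-1\not\equiv 0$, the matrix $T^{\ast}(z^2)$ is invertible over the field of rational functions, with $\bigl(T^{\ast}(z^2)\bigr)^{-1}=\tfrac{1}{z^{-2}-1}\bigl(\begin{smallmatrix}1 & 1\\ 0 & z^{-2}-1\end{smallmatrix}\bigr)$. Hence the only possible symbol of the Taylor scheme is
\[
  (\partial_t\A)^{\ast}(z)=2\,T^{\ast}(z)\,\A^{\ast}(z)\,\bigl(T^{\ast}(z^2)\bigr)^{-1},
\]
and the whole content of the lemma is that this expression is a $2\times 2$ matrix of Laurent polynomials, so that it genuinely arises from a finitely supported mask $\partial_t\A$ of dimension $2$; the operator identity \cref{derivedtaylor} then holds by construction.

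I would then carry out the block multiplication using the notation \cref{blockmatrix}. A short computation shows the four entries of $(\partial_t\A)^{\ast}(z)$ are $\tfrac{2 m(z)}{z^{-2}-1}$, $\tfrac{2 m(z)}{z^{-2}-1}+2m_{12}(z)$, $\tfrac{2\ab_{21}^{\ast}(z)}{z^{-2}-1}$ and $\tfrac{2\ab_{21}^{\ast}(z)}{z^{-2}-1}+2m_{22}(z)$, where $m(z)=(z^{-1}-1)\ab_{11}^{\ast}(z)-\ab_{21}^{\ast}(z)$, and $m_{12},m_{22}$ are already Laurent polynomials. The spectral condition enters exactly here. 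By part (\ref{spectral21}) of \Cref{spectralcond} we have $\ab_{21}^{\ast}(1)=\ab_{21}^{\ast}(-1)=0$, so, since $z^{-2}-1=(z^{-1}-1)(z^{-1}+1)$, the quotient $\ab_{21}^{\ast}(z)/(z^{-2}-1)$ is a Laurent polynomial; and by part (\ref{spectral11}) we have $\ab_{11}^{\ast}(-1)=0$, so the numerator $m(z)$ vanishes at $z=1$ (because $\ab_{21}^{\ast}(1)=0$) and at $z=-1$ (because $2\ab_{11}^{\ast}(-1)+\ab_{21}^{\ast}(-1)=0$), hence is divisible by $z^{-2}-1$. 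Therefore all four entries are Laurent polynomials, which completes the argument.

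The matrix algebra above is routine; the one step that deserves care is the pole-cancellation bookkeeping, i.e.\ matching each spurious denominator — $z^{-2}-1$, and its factor $z^{-1}+1$ — to the correct part of the spectral condition. It is worth noting that only the ``reproduction of constants'' parts (\ref{spectral11}) and (\ref{spectral21}) of \Cref{spectralcond} are actually used; the linear-reproduction parts (\ref{spectral11mix}) and (\ref{spectral21mix}) play no role in the mere existence of $\partial_t\A$.
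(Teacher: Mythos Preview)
Your proof is correct and follows essentially the same route as the paper: both pass to symbols, solve the identity $T^{\ast}(z)\A^{\ast}(z)=\tfrac12(\partial_t\A)^{\ast}(z)T^{\ast}(z^2)$ for $(\partial_t\A)^{\ast}(z)$, and then verify that the resulting entries are Laurent polynomials using only conditions (\ref{spectral11}) and (\ref{spectral21}) of \Cref{spectralcond}. Your closing remark that the linear-reproduction conditions are not needed here is also made explicitly in the paper.
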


\subsection{Properties of the Taylor scheme}\label{sec:propertiestaylor}
In order to increase the smoothness of a {\hs}, the obvious idea is to pass to its Taylor scheme defined in \cref{derivedtaylor}, increase the smoothness of this {\vs} by \Cref{algorithm}
and then use the resulting {\vs} as the Taylor scheme of a new {\hs}.
The first question which arises in this process is if the last step is always possible, i.e., if the smoothing operator $\mathcal{I}_k$ of \cref{vec:soperator} maps Taylor schemes to Taylor schemes.
To answer this question depicted in \Cref{fighermite1}, we state algebraic conditions on a mask $\B$ of a {\vs} guaranteeing that $S_{\B}$ is a Taylor scheme.
\begin{definition}\label{taylor}
The algebraic conditions on a mask $\B$,
\begin{enumerate}
\item\label{tmask1} $\bb_{12}^{\ast}(1)=0, \bb_{12}^{\ast}(-1)=0,$
\item\label{tmask2} $\bb_{22}^{\ast}(1)=2, \bb_{22}^{\ast}(-1)=0,$
\item\label{tmask3} $\bb_{11}^{\ast}(1)+\bb_{21}^{\ast}(1)=2,$
\end{enumerate}
are called \emph{Taylor conditions}.
(Here we use the notation of \cref{blockmatrix}).
\end{definition}
We prove in \Cref{masklemma1} that the mask $\partial_t \A$ obtained via \cref{derivedtaylor} satisfies the Taylor conditions. This justifies the name \emph{Taylor conditions}.
\begin{remark}\label{rem:e2}
It is easy to verify that conditions (\ref{tmask1}) and (\ref{tmask2}) of \Cref{taylor} are equivalent to $e_2 \in \mathcal{E}_{\B}$.
\end{remark}

\begin{figure*}[!b]
\centering
\begin{tikzpicture}[scale=4.5]
\node (A) at (-0.1,1) {$S_{\A} \in HC^{\ell}$};
\node (B) at (1.4,1) {$S_{\partial_t\A} \in C^{\ell-1}$};
\node (C) at (-0.1,0.45) {$S_{\C} \in HC^{\ell+1}$};
\node (D) at (1.4,0.45) {$S_{\mathcal{I}_k\partial_t\A} \in C^{\ell}$};
\node (E) at (-0.2,0.97) {\phantom{a}};
\path[->,font=\scriptsize]
(A) edge node[above]{$\partial_t$} (B)
(B) edge node[right]{$\mathcal{I}_k$} (D)
(D) edge node[above]{?} (C);
\end{tikzpicture}
\caption{A schematic representation of the idea for smoothing {\hs}s.}
\label{fighermite1}
\end{figure*}

The next lemmas are concerned with the connection between masks satisfying the spectral condition of \cref{spectral} and masks satisfying the Taylor conditions of \Cref{taylor}.
\begin{lemma}\label{masklemma1}
Let $\A$ be a mask satisfying the spectral condition. Then we can define a mask $\partial_t\A$ such that \cref{derivedtaylor} is satisfied,
and $\partial_t\A$ satisfies the Taylor conditions.
\end{lemma}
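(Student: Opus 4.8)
The plan is to work entirely at the level of symbols. By \Cref{existtaylor}, the spectral condition on $\A$ guarantees that $TS_{\A}=\tfrac12 S_{\partial_t\A}T$ has a Laurent-polynomial solution $\partial_t\A$; writing this relation in terms of symbols using \Cref{lem:maskproperties} gives
\begin{equation*}
T^{\ast}(z)\A^{\ast}(z)=\tfrac12(\partial_t\A)^{\ast}(z)T^{\ast}(z^2),\qquad
T^{\ast}(z)=\begin{pmatrix} z^{-1}-1 & -1 \\ 0 & 1\end{pmatrix}.
\end{equation*}
Since $T^{\ast}(z)$ is invertible as a matrix over the field of rational functions, this determines $(\partial_t\A)^{\ast}(z)$ uniquely as
$(\partial_t\A)^{\ast}(z)=2\,T^{\ast}(z)\A^{\ast}(z)\,T^{\ast}(z^2)^{-1}$, and I will simply compute the four blocks $\bb_{ij}^{\ast}$ of $\partial_t\A$ explicitly in terms of the blocks $\ab_{ij}^{\ast}$ of $\A$. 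First I would invert $T^{\ast}(z^2)$, which is elementary and triangular, and carry out the product to get closed-form expressions: the bottom row of $T^{\ast}(z)$ being $(0\ 1)$ means $\bb_{21}^{\ast}$ and $\bb_{22}^{\ast}$ come directly from the second row of $\A^{\ast}$ combined with the correcting factors from $T^{\ast}(z^2)^{-1}$, while the top row mixes both rows of $\A^{\ast}$ through the $(z^{-1}-1)$ and $-1$ entries.

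Once I have the blocks of $\partial_t\A$, the proof reduces to evaluating them (and, where needed, their derivatives) at $z=1$ and $z=-1$ and substituting the spectral identities from \Cref{spectralcond}. Specifically: condition (\ref{tmask2}) of \Cref{taylor}, $\bb_{22}^{\ast}(\pm1)$, should follow from parts (\ref{spectral21}) and (\ref{spectral21mix}) of \Cref{spectralcond} — the combination ${\ab_{21}^{\ast}}'(1)-2\ab_{22}^{\ast}(1)=-2$ is exactly what produces the value $2$ after the factor coming from $T^{\ast}(z^2)^{-1}$, and ${\ab_{21}^{\ast}}'(-1)+2\ab_{22}^{\ast}(-1)=0$ together with $\ab_{21}^{\ast}(-1)=0$ produces the value $0$. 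Condition (\ref{tmask1}), $\bb_{12}^{\ast}(\pm1)=0$, should similarly drop out of parts (\ref{spectral11}) and (\ref{spectral11mix}): the entries $\ab_{11}^{\ast}(-1)=0$, $\ab_{21}^{\ast}(\pm1)=0$, and the mixed relations involving ${\ab_{11}^{\ast}}'$ and $\ab_{12}^{\ast}$ are precisely tuned to cancel. Finally condition (\ref{tmask3}), $\bb_{11}^{\ast}(1)+\bb_{21}^{\ast}(1)=2$, I expect to follow from part (\ref{spectral11}), $\ab_{11}^{\ast}(1)=2$, again after tracking the normalization factors from $T^{\ast}(1)$ and $T^{\ast}(1)^{-1}$; it may also be convenient here to recall that $S_{\partial_t\A}$ is a genuine subdivision scheme so that $(\partial_t\A)^{\ast}(1)$ has the appropriate row-sum structure, which gives an independent check.

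The main obstacle is purely bookkeeping: the factors $(z^{-1}-1)$, $(z^{-2}-1)$ and $(z^{-1}+1)$ appearing in $T^{\ast}(z)$ and $T^{\ast}(z^2)^{-1}$ are singular at $z=\pm1$, so several of the blocks of $(\partial_t\A)^{\ast}(z)$ are a priori only rational functions, and one must verify that the numerators vanish to the right order before the quotients can be evaluated at $\pm1$. This is exactly where the spectral conditions enter — they are the vanishing conditions that make $\partial_t\A$ a Laurent polynomial in the first place (this is the content of \Cref{existtaylor}), so after clearing denominators the evaluations become L'Hôpital-type limits and the derivative relations in parts (\ref{spectral11mix}) and (\ref{spectral21mix}) of \Cref{spectralcond} are consumed there. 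I would organize the computation block-by-block, handling the $(2,2)$ and $(2,1)$ entries first (they are the simplest, coming from the trivial bottom row of $T^{\ast}$), then the $(1,2)$ and $(1,1)$ entries, and at each stage record which spectral identity is used, so that the bookkeeping stays transparent and the Taylor conditions emerge one at a time.
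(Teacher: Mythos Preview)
Your proposal is correct and follows essentially the same approach as the paper's proof: solve $T^{\ast}(z)\A^{\ast}(z)=\tfrac12(\partial_t\A)^{\ast}(z)T^{\ast}(z^2)$ for the four blocks of $(\partial_t\A)^{\ast}(z)$, observe that the spectral conditions make these Laurent polynomials, and then verify the Taylor conditions by clearing the singular factors $(z^{-2}-1)$ or $(z^{-1}+1)$, differentiating, and substituting $z=\pm1$ --- exactly the ``L'H\^opital-type'' bookkeeping you describe. The paper carries this out block by block (treating $(1,2)$ and $(2,2)$ first, then adding $(1,1)$ and $(2,1)$ for condition~(\ref{tmask3})); your suggested ordering differs only cosmetically, and your aside about row-sum structure is unnecessary but harmless.
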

Note that the existence of $\partial_t\A$ in \Cref{masklemma1} is a result of \citet{merrien12} (see \Cref{existtaylor}). We prove it here because its proof is used in our analysis.
\renewcommand{\bc}{\boldsymbol{\gamma}}
\begin{proof}
By solving \cref{derivedtaylor} in terms of symbols for $\partial_t\A$, it is easy to see that
\begin{align}\label{deriv11}
(\partial_t\A)^{\ast}_{11}(z)&=2\Big(\frac{\ab_{11}^{\ast}(z)}{z^{-1}+1}-\frac{\ab_{21}^{\ast}(z)}{z^{-2}-1}\Big),\\ \label{deriv12}
(\partial_t\A)^{\ast}_{12}(z)&=2\Big((z^{-1}-1)\ab_{12}^{\ast}(z)-\ab_{22}^{\ast}(z)+\frac{\ab_{11}^{\ast}(z)}{z^{-1}+1}-\frac{\ab_{21}^{\ast}(z)}{z^{-2}-1}\Big),\\ \label{deriv21}
(\partial_t\A)^{\ast}_{21}(z)&=2\frac{\ab_{21}^{\ast}(z)}{z^{-2}-1},\\ \label{deriv22}
(\partial_t\A)^{\ast}_{22}(z)&=2\Big(\ab_{22}^{\ast}(z)+ \frac{\ab_{21}^{\ast}(z)}{z^{-2}-1}\Big). 
\end{align}
By the algebraic conditions of \Cref{spectralcond}, $(\partial_t\A)^{\ast}(z)$ defined by eqs. (\ref{deriv11}) -- (\ref{deriv22}) is a Laurent polynomial.
Note that we only need the first two conditions of \Cref{spectralcond} equivalent to the reproduction of constants to define $\partial_t\A$.

We now show that $\partial_t\A$ satisfies the Taylor conditions. Multiplying \cref{deriv12} with the factor $(z^{-2}-1)$, differentiating with respect to $z$, substituting $z=1$ and $z=-1$, and applying \Cref{spectralcond}, we obtain:
\begin{align*}
(\partial_t \A)^{\ast}_{12}(1)&=-2\ab_{22}^{\ast}(1)+\ab_{11}^{\ast}(1)+{\ab_{21}^{\ast}}'(1)=0, \\ 
(\partial_t \A)^{\ast}_{12}(-1)&=-4\ab_{12}^{\ast}(-1)-2\ab_{22}^{\ast}(-1)-2{\ab_{11}^{\ast}}'(-1)-\ab_{11}^{\ast}(-1)-{\ab_{21}^{\ast}}'(-1)=0.
\end{align*}
This proves that part (\ref{tmask1}) of \Cref{taylor} is satisfied.

Applying the same procedure to \cref{deriv22}, we obtain
\begin{align*}
(\partial_t \A)^{\ast}_{22}(1)&=2\ab^{\ast}_{22}(1)-{\ab_{21}^{\ast}}'(1)=2,\\
(\partial_t \A)^{\ast}_{22}(-1)&=2\ab^{\ast}_{22}(-1)+{\ab_{21}^{\ast}}'(-1)=0.\\
\end{align*}
This concludes part (\ref{tmask2}) of \Cref{taylor}. Similarly \cref{deriv11,deriv21} imply
\begin{equation*}
(\partial_t \A)^{\ast}_{11}(1)+(\partial_t \A)^{\ast}_{21}(1)=(2+{\ab_{21}^{\ast}}'(1))-{\ab_{21}^{\ast}}'(1)=2,
\end{equation*}
which proves (\ref{tmask3}) of \Cref{taylor}.
\end{proof}

\begin{lemma}\label{masklemma2}
 Let $\B$ be a mask satisfying the Taylor conditions. Then we can define a mask $\mathcal{I}_t\B$ such that 
 $$TS_{\mathcal{I}_t\B}=\tfrac{1}{2}S_{\B}T$$
 is satisfied, and $\mathcal{I}_t\B$ satisfies the spectral condition.
\end{lemma}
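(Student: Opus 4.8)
The plan is to mirror the structure of the proof of \Cref{masklemma1} but in the reverse direction: given a mask $\B$ satisfying the Taylor conditions, I will solve the equation $TS_{\mathcal{I}_t\B}=\tfrac12 S_{\B}T$ for the symbol of $\mathcal{I}_t\B$, check that the resulting expression is a genuine Laurent polynomial (this is where the Taylor conditions enter), and then verify that this symbol satisfies the four algebraic conditions of \Cref{spectralcond}. Concretely, writing $\A=\mathcal{I}_t\B$ in the block notation \cref{blockmatrix} and expressing the operator identity in terms of symbols via \Cref{lem:maskproperties}, the matrix equation
\begin{equation*}
\begin{pmatrix} z^{-1}-1 & -1 \\ 0 & 1\end{pmatrix}\A^{\ast}(z)
=\B^{\ast}(z)\begin{pmatrix} z^{-2}-1 & -1 \\ 0 & 1\end{pmatrix}
\end{equation*}
can be solved explicitly for the four blocks $\ab_{ij}^{\ast}(z)$ of $\A^{\ast}$ in terms of the blocks $\bb_{ij}^{\ast}(z)$ of $\B^{\ast}$; this is the exact analogue of eqs.\ (\ref{deriv11})--(\ref{deriv22}) but inverted, and it will involve division by $(z^{-1}+1)$ and by $(z^{-2}-1)$.

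First I would carry out the symbolic inversion. The second block row of the identity immediately gives $\ab_{21}^{\ast}$ and $\ab_{22}^{\ast}$ in terms of $\bb_{21}^{\ast},\bb_{22}^{\ast}$ (with a factor $(z^{-2}-1)^{-1}$ appearing), and then the first block row gives $\ab_{11}^{\ast},\ab_{12}^{\ast}$ (with a factor $(z^{-1}+1)^{-1}$). Next I would show each of these four rational expressions is actually a Laurent polynomial: the factor $(z^{-2}-1)$ in a denominator is cancelled because conditions (\ref{tmask1}) and (\ref{tmask2}) force $\bb_{12}^{\ast}$ and $\bb_{22}^{\ast}$ to vanish at $z=\pm1$ (equivalently $e_2\in\mathcal E_{\B}$, cf.\ \Cref{rem:e2}), and the factor $(z^{-1}+1)$ is cancelled using these together with condition (\ref{tmask3}); here one checks numerator vanishing at $z=-1$ (for the $(z^{-1}+1)^{-1}$ factor) and at $z=1$ and $z=-1$ (for the $(z^{-2}-1)^{-1}$ factor). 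Finally I would verify the spectral condition for $\A=\mathcal{I}_t\B$ by substituting $z=1$ and $z=-1$, and also the derivatives at $\pm1$, into the explicit formulas for the four blocks, exactly as in the proof of \Cref{masklemma1}: parts (\ref{spectral11}) and (\ref{spectral21}) of \Cref{spectralcond} should follow directly from conditions (\ref{tmask2}) and (\ref{tmask3}), while parts (\ref{spectral11mix}) and (\ref{spectral21mix}) will require differentiating the cleared-denominator forms of the block equations and evaluating at $z=\pm1$; the value of $\varphi$ is read off from condition (\ref{spectral11mix}).

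I expect the main obstacle to be the bookkeeping in the last step: the mixed conditions (\ref{spectral11mix}) and (\ref{spectral21mix}) involve first derivatives of ratios like $\bb_{12}^{\ast}(z)/(z^{-1}-1)$ and $\bb_{22}^{\ast}(z)$ evaluated at $\pm1$, so one must expand each block of $\B^{\ast}$ around $z=\pm1$ to first order and track the contributions carefully — an application of L'Hôpital's rule is essentially needed at $z=-1$ because both numerator and denominator vanish there. The routine but delicate point is that condition (\ref{tmask1}), i.e.\ $\bb_{12}^{\ast}(\pm1)=0$, must be used not only to clear denominators but also to guarantee that the derivative terms which survive at $z=\pm1$ are exactly the ones matching the right-hand sides $2\varphi$, $0$, $-2$, $0$ in \Cref{spectralcond}. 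Everything else is a finite, mechanical computation analogous to the one already displayed for $\partial_t\A$, so no new ideas beyond the symbolic inversion and the Laurent-polynomiality check are required.
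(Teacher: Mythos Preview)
Your overall strategy coincides with the paper's: write the operator identity in symbols, invert to solve for the blocks of $(\mathcal{I}_t\B)^{\ast}$, check Laurent polynomiality via the Taylor conditions, then verify \Cref{spectralcond}. However, several of your anticipated details are wrong because you have effectively inverted the wrong matrix factor.

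To solve for $\A^{\ast}$ you must invert $T(z)=\bigl(\begin{smallmatrix} z^{-1}-1 & -1\\ 0 & 1\end{smallmatrix}\bigr)$ on the \emph{left}; its determinant is $z^{-1}-1$, so the only genuine division that occurs is by $(z^{-1}-1)$, not by $(z^{-1}+1)$ or $(z^{-2}-1)$. Concretely the second row gives $\ab_{21}^{\ast}=\tfrac12(z^{-2}-1)\bb_{21}^{\ast}$ and $\ab_{22}^{\ast}=\tfrac12(\bb_{22}^{\ast}-\bb_{21}^{\ast})$ with no division at all; in the first row $\ab_{11}^{\ast}=\tfrac12(z^{-1}+1)(\bb_{11}^{\ast}+\bb_{21}^{\ast})$ again needs no division, and only $\ab_{12}^{\ast}=\tfrac12(\bb_{12}^{\ast}-\bb_{11}^{\ast}-\bb_{21}^{\ast}+\bb_{22}^{\ast})/(z^{-1}-1)$ carries a denominator. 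The polynomiality check is therefore a single evaluation at $z=1$ (not $\pm1$), and it uses all three Taylor conditions simultaneously: $\bb_{12}^{\ast}(1)=0$, $\bb_{22}^{\ast}(1)=2$ (not $0$, contrary to what you wrote), and $\bb_{11}^{\ast}(1)+\bb_{21}^{\ast}(1)=2$. No L'H\^opital at $z=-1$ is needed anywhere. With these corrections your plan goes through exactly as in the paper, and $\varphi$ comes out as $\tfrac12({\bb_{12}^{\ast}}'(1)+{\bb_{22}^{\ast}}'(1)-1)$.
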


\begin{proof}
Suppose that $\B$ satisfies the Taylor conditions. We define a mask $\mathcal{I}_t\B$ satisfying the equation $TS_{\mathcal{I}_t\B}=\tfrac{1}{2}S_{\B}T$ by writing it in terms of symbols. This yields the symbol
\begin{align}\nonumber
(\mathcal{I}_t\B)^{\ast}_{11}(z)=& \tfrac{1}{2}(z^{-1}+1)(\bb^{\ast}_{11}(z)+\bb^{\ast}_{21}(z)),\\ \label{int12}
(\mathcal{I}_t\B)^{\ast}_{12}(z)=& \tfrac{1}{2}\Big(\bb^{\ast}_{12}(z)-\bb^{\ast}_{11}(z)-\bb^{\ast}_{21}(z)+\bb^{\ast}_{22}(z)\Big)\Big/(z^{-1}-1),\\ \nonumber
(\mathcal{I}_t\B)^{\ast}_{21}(z)=& \tfrac{1}{2}\bb^{\ast}_{21}(z)(z^{-2}-1),\\ \nonumber
(\mathcal{I}_t\B)^{\ast}_{22}(z)=& \tfrac{1}{2}(\bb^{\ast}_{22}(z)-\bb^{\ast}_{21}(z)),
\end{align}
It follows from the Taylor conditions that $(\mathcal{I}_t\B)^{\ast}(z)$ is a Laurent polynomial and thus well-defined.

We continue by showing that $\mathcal{I}_t\B$ satisfies the spectral condition.
It is immediately clear from the definition of $\mathcal{I}_t\B$ that (\ref{spectral11}) and (\ref{spectral21}) of \Cref{spectralcond} are satisfied. Furthermore, it is easy to see that
\begin{align*}
&{(\mathcal{I}_t\B)^{\ast}_{21}}'(1)-2(\mathcal{I}_t\B)^{\ast}_{22}(1)=-\bb_{21}^{\ast}(1)
-\bb_{22}^{\ast}(1)+\bb_{21}^{\ast}(1)=-2,\\
&{(\mathcal{I}_t\B)^{\ast}_{21}}'(-1)+2(\mathcal{I}_t\B)^{\ast}_{22}(-1)=\bb_{21}^{\ast}(1)
+\bb_{22}^{\ast}(-1)-\bb_{21}^{\ast}(-1)=0,
\end{align*}
which proves (\ref{spectral21mix}) of \Cref{spectralcond}.

From the definition of $\mathcal{I}_t\B$ we see that
\begin{align*}
{(\mathcal{I}_t\B)^{\ast}_{11}}'(-1)+2(\mathcal{I}_t\B)^{\ast}_{12}(-1)=\: &
-\tfrac{1}{2}(\bb_{11}^{\ast}(-1)+\bb_{21}^{\ast}(-1))\\
& -\tfrac{1}{2}(\bb_{12}^{\ast}(-1)-\bb_{11}^{\ast}(-1)-\bb_{21}^{\ast}(-1)+\bb_{22}^{\ast}(-1))\\
=\: &0.
\end{align*}
Furthermore, by multiplying \cref{int12} with the factor $(z^{-1}-1)$, differentiating this equation with respect to $z$, substituting $z=1$ and using the Taylor conditions, we obtain
\begin{equation*}
(\mathcal{I}_t\B)^{\ast}_{12}(1)=-\tfrac{1}{2}\Big({\bb^{\ast}_{12}}'(1)-{\bb^{\ast}_{11}}'(1)+{\bb^{\ast}_{22}}'(1)-{\bb^{\ast}_{21}}'(1)
\Big).
\end{equation*}
This implies
\begin{equation*}
{(\mathcal{I}_t\B)^{\ast}_{11}}'(1)-2(\mathcal{I}_t\B)^{\ast}_{12}(1)=2\varphi,
\end{equation*}
where $\varphi$ is defined by $\varphi=\tfrac{1}{2}({\bb^{\ast}_{12}}'(1)+{\bb^{\ast}_{22}}'(1)-1)$. This proves property (\ref{spectral11mix}) of \Cref{spectralcond}, concluding the proof of the lemma.
\end{proof}

In \Cref{masklemma1} and \Cref{masklemma2} we defined two operators $\partial_t$ and $\mathcal{I}_t$ which are inverse to each other. Denote by $\ell_s$ be the set of all masks satisfying the spectral condition of \cref{spectral} and 
by $\ell_t$ the set of all masks satisfying the Taylor conditions of \Cref{taylor}. Then
\begin{align}
\partial_t: \quad &\ell_s \to \ell_t \hspace{2cm}  \mathcal{I}_t: \quad \ell_t \to \ell_s
\end{align}
and it easy to verify that 
\begin{equation}
\partial_t(\mathcal{I}_t \B)=\B \quad \text{and} \quad \mathcal{I}_t(\partial_t \A)=\A.
\end{equation}

\subsection{Relations between converging vector and Hermite schemes}\label{subsec:conv}
In the previous section we derived a one-to-one correspondence between a mask satisfying the spectral condition and a mask satisfying the Taylor conditions. For masks of converging schemes we formulate a result based on 
Theorem 21 in \citet{merrien12}, and on the results of \Cref{sec:propertiestaylor}.
\begin{theorem}\label{hermitetaylorconv}
A $C^{\ell}, \ell \geq 0,$ {\vs} $S_{\B}$ satisfying the Taylor conditions with limit functions with vanishing first component, gives rise to an $HC^{\ell+1}$ Hermite scheme $S_{\A}$ satisfying the spectral condition.
\end{theorem}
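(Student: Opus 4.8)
The plan is to construct the Hermite mask $\A$ explicitly and then reduce the smoothness claim to the known criterion expressing $HC^{\ell+1}$ regularity of a {\hs} through $C^{\ell}$ regularity of its Taylor scheme. First, since $S_{\B}$ is a convergent {\vs} satisfying the Taylor conditions we have $\B\in\ell_t$, so \Cref{masklemma2} applies and produces a mask $\A:=\mathcal{I}_t\B$ of dimension $2$ which satisfies the spectral condition and obeys $TS_{\A}=\tfrac12 S_{\B}T$. Thus $S_{\A}$ is a legitimate {\hs} refining function and first derivative values, with the spectral condition in force, which is exactly what the conclusion of the theorem requires on the algebraic side.

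Next I would identify $\B$ as the Taylor mask of $\A$. By the one-to-one correspondence between $\ell_s$ and $\ell_t$ recorded right after \Cref{masklemma2}, we have $\partial_t(\mathcal{I}_t\B)=\B$; equivalently, the identity $TS_{\A}=\tfrac12 S_{\B}T$ from the previous step together with the uniqueness implicit in \Cref{existtaylor} shows that $\partial_t\A=\B$, i.e.\ the Taylor scheme of $S_{\A}$ is precisely the {\vs} $S_{\B}$. Consequently the hypotheses of the theorem translate into the statement that the Taylor scheme $S_{\partial_t\A}$ is $C^{\ell}$ and its limit functions have vanishing first component.

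Finally I would invoke Theorem~21 of \citet{merrien12}, the Hermite analogue of \Cref{smoothvector}: if a mask $\A$ satisfies the spectral condition and its Taylor scheme is $C^{\ell}$ ($\ell\geq0$) with limit functions of vanishing first component, then $S_{\A}$ is $HC^{\ell+1}$. Applying this to the $\A$ constructed above yields the result. No separate convergence assumption on $S_{\A}$ is needed, since $S_{\B}$ being $C^{\ell}$ is in particular convergent, and that already forces $S_{\A}$ to be $HC^{1}$-convergent via Merrien's criterion. The only delicate point is bookkeeping: one must check that the mask coming out of \Cref{masklemma2} really satisfies all four parts of the spectral condition (this is carried out in the proof of that lemma) and that the ``vanishing first component of the Taylor limit'' hypothesis used here is verbatim the one appearing in \citet{merrien12} --- in particular that the first component $\Psi^{0}$ of the vector limit of $S_{\partial_t\A}$ is the quantity whose vanishing is required. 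Granting that, the proof is a direct concatenation of \Cref{masklemma2} with the cited theorem.
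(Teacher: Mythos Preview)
Your proposal is correct and follows exactly the route the paper indicates: the paper does not spell out a proof of this theorem but explicitly states that it is ``based on Theorem~21 in \citet{merrien12}, and on the results of \Cref{sec:propertiestaylor}'', i.e.\ on \Cref{masklemma2} (to produce $\A=\mathcal{I}_t\B\in\ell_s$) together with the inverse relation $\partial_t(\mathcal{I}_t\B)=\B$ and then the Merrien--Sauer criterion. Your write-up simply makes this sketch precise.
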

In the next lemma we show that the condition of vanishing first component in the limits generated by $S_{\B}$ can be replaced by a condition on the mask $\B$. This also follows from results in \citet{micchelli98}.
\renewcommand{\bc}{\boldsymbol{c}}
\begin{lemma}\label{formlimit}
 Let $S_{\B}$ be a convergent {\vs}. Denote by $\Psi_{\bc}={\psi_{1,\bc} \choose \psi_{2,\bc}}$ the limit function generated from the initial data $\bc \in \ell(\mathbb{R}^2)$. Then
 \begin{equation*}
 \mathcal{E}_{\B}=\operatorname{span}\{e_2\} \iff \psi_{1,\bc}=0 \text{ for all initial data } \bc.
 \end{equation*}
\end{lemma}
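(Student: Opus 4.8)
The plan is to prove the two implications separately; the direction "$\psi_{1,\bc}=0$ for all $\bc \Rightarrow \mathcal{E}_{\B}=\operatorname{span}\{e_2\}$" is elementary, while the converse is the substantial one.

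\emph{The easy direction.} Here I would use constant initial data. If $v\in\mathcal{E}_{\B}$, then $B^0v=B^1v=v$ with $B^0,B^1$ as in \cref{A0A1}; since $\sum_{j\in\m}B_{i-2j}$ equals $B^0$ for even $i$ and $B^1$ for odd $i$, the constant sequence $\bc^0=\{v:i\in\m\}$ satisfies $(S_{\B}\bc^0)_i=v$ for every $i$, i.e.\ it is a fixed point of $S_{\B}$. Hence $\bc^n=\bc^0$ for all $n$, and by the definition of convergence the limit function of $\bc^0$ is the constant function $x\mapsto v$, so $\psi_{1,\bc^0}\equiv v_1$ where $v={v_1 \choose v_2}$. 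The hypothesis forces $v_1=0$, i.e.\ $v\in\operatorname{span}\{e_2\}$, and therefore $\mathcal{E}_{\B}\subseteq\operatorname{span}\{e_2\}$; since $S_{\B}$ is convergent, $\mathcal{E}_{\B}\neq\{0\}$, and equality follows.

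\emph{The converse.} I would first pass to the normalization $\mathcal{E}=\operatorname{span}\{e_1\}$ used throughout \Cref{sec:vector}. Let $\Pi$ be the permutation matrix exchanging $e_1$ and $e_2$ and set $\hat{B}_i=\Pi B_i\Pi$. Then $\mathcal{E}_{\hat{\B}}=\Pi\,\mathcal{E}_{\B}=\operatorname{span}\{e_1\}$, and by \Cref{Vconv} the scheme $S_{\hat{\B}}$ is convergent; moreover $S_{\hat{\B}}^n(\Pi\bc^0)=\Pi\,S_{\B}^n\bc^0$ for every $n$, so the limit of $\Pi\bc^0$ under $S_{\hat{\B}}$ is $\Pi\,\Psi_{\bc^0}$, whose second component is exactly $\psi_{1,\bc^0}$. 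Thus it suffices to show that the second component of every limit function of $S_{\hat{\B}}$ vanishes identically. Since $\mathcal{E}_{\hat{\B}}=\operatorname{span}\{e_1\}$, \Cref{smooth1} provides the derived scheme $\partial_1\hat{\B}$ with $\D_1 S_{\hat{\B}}=\tfrac12 S_{\partial_1\hat{\B}}\D_1$, where $\D_1$ is as in \cref{forwarddifference}; iterating gives $\D_1\hat{\bc}^n=(\tfrac12 S_{\partial_1\hat{\B}})^n\D_1\hat{\bc}^0$ for $\hat{\bc}^n=S_{\hat{\B}}^n\hat{\bc}^0$. The crucial input now is the converse of the convergence criterion in \Cref{vectorsmooth} — the sharper result of \citet{charina05,cohen96} alluded to just after that theorem: convergence of $S_{\hat{\B}}$ implies that $\tfrac12 S_{\partial_1\hat{\B}}$ is contractive, hence $\|(\tfrac12 S_{\partial_1\hat{\B}})^n\|\to 0$ and $\|\D_1\hat{\bc}^n\|_{\infty}\to 0$ for every $\hat{\bc}^0\in\ell^{\infty}(\mathbb{R}^2)$. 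The lower block of $\D_1\hat{\bc}^n$ is precisely the sequence of second components $\{(\hat{c}^n_i)_2:i\in\m\}$ (no difference is taken there), so $\sup_i|(\hat{c}^n_i)_2|\to 0$; combining this with $\hat{c}^n_i\to\hat{\Psi}(i/2^n)$ uniformly and the continuity of $\hat{\Psi}$, the second component of $\hat{\Psi}$ must vanish identically, which by the reduction above yields $\psi_{1,\bc}\equiv 0$ for all $\bc$.

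I expect the main obstacle to be exactly this last point. Mere $C^0$-convergence of $S_{\hat{\B}}$ only gives that the differenced sequences tend to $0$ componentwise and that $\hat{\bc}^n$ stays bounded, which by itself does not force the \emph{undifferenced} second-component sequence to tend to zero; one genuinely needs the equivalence between convergence and contractivity of the derived scheme. Once that equivalence is invoked the remaining steps are short computations. (As the remark after the lemma indicates, this direction can alternatively be extracted from the structure of the matrix refinable function together with the results of \citet{micchelli98}, but the derived-scheme argument above seems the most self-contained given the tools already assembled in \Cref{sec:vector}.)
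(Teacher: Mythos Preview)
Your proposal is correct, but you and the paper have in effect swapped which implication is the ``easy'' one and which requires outside input.

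For $\mathcal{E}_{\B}=\operatorname{span}\{e_2\}\Rightarrow\psi_{1,\bc}=0$, the paper gives a two-line argument that avoids the derived scheme entirely: from the refinement rules $(S_{\B}^{k+1}\bc)_{2i}=\sum_j B_{2j}(S_{\B}^k\bc)_{i-j}$ and $(S_{\B}^{k+1}\bc)_{2i+1}=\sum_j B_{2j+1}(S_{\B}^k\bc)_{i-j}$, letting $k\to\infty$ with $i/2^k\to x$ (hence also $(i-j)/2^k\to x$ for each fixed $j$ in the support) yields $\Psi_{\bc}(x)=B^0\Psi_{\bc}(x)=B^1\Psi_{\bc}(x)$, i.e.\ $\Psi_{\bc}(x)\in\mathcal{E}_{\B}$ for every $x$. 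This is your constant-sequence trick run backwards, and it makes the contractivity of $\tfrac12 S_{\partial_1\hat{\B}}$ unnecessary for this direction.

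Conversely, for $\psi_{1,\bc}=0\Rightarrow\mathcal{E}_{\B}=\operatorname{span}\{e_2\}$ the paper imports the identity $\lim_{n\to\infty}M_{\B}^n=\int_{\mathbb R}\Phi(x)\,dx$ from the proof of Theorem~2.2 in \citet{cohen96} (with $\Phi$ the matrix basic limit function): the hypothesis forces the first row of this limit to vanish, which rules out $M_{\B}$ having a $1$-eigenvector outside $\operatorname{span}\{e_2\}$. Your constant-sequence argument here is both shorter and more elementary.

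So each proof appeals to one external fact, but for opposite directions: you invoke the equivalence ``convergence $\Leftrightarrow$ contractivity of the derived scheme'' (which the paper never states, and it is not clear that this is the ``stronger result'' alluded to after \Cref{vectorsmooth}), while the paper invokes the $M_{\B}^n$ limit formula. The tidiest synthesis would be your constant-sequence argument for $\Leftarrow$ together with the paper's observation $\Psi_{\bc}(x)\in\mathcal{E}_{\B}$ for $\Rightarrow$; then neither implication needs anything beyond the definition of convergence.
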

\begin{proof}
 First we show that $\mathcal{E}_{\B}=\operatorname{span}\{e_2\}$ implies $\psi_{1,\bc}=0$ for all $\bc$. This follows from the observation that $\Psi_{\bc}(x)\in \mathcal{E}_{\B}$ for all $x \in \mathbb{R}$.
 The observation follows from the convergence of $S_{\B}$ to a continuous limit and from the basic refinement rules for large $k$
 \begin{equation*}
  (S_{\B}^{k+1}\bc)_{2i}=\sum_{j \in \m}B_{2j}(S_{\B}^{k}\bc)_{i-j}, \quad (S_{\B}^{k+1}\bc)_{2i+1}=\sum_{j \in \m}B_{2j+1}(S_{\B}^{k}\bc)_{i-j}, \text{ for } i \in \m.
 \end{equation*}
To prove the other direction we use the proof of Theorem 2.2 in \citet{cohen96}. It shows that 
\begin{equation}\label{limM}
 \lim_{n \to \infty}M_{\B}^n=\int_{\mathbb{R}} \Phi(x) dx,
\end{equation}
where $M_{\B}$ is defined in \Cref{eigenvaluesmod}, and $\Phi$ is the limit function generated by $S_{\B}$ from the initial data $\delta I_2$. Here $I_2$ is the identity matrix of dimension 2 and $\delta \in \ell(\mathbb{R})$ satisfies $\delta_0=1$, $\delta_i=0, i \neq 0, i \in \m $,
or equivalently ${\phi_{1j}(x) \choose \phi_{2j}(x) }$ is the limit from the initial data $\delta e_j$ for $j \in \{1,2\}$. Thus
\begin{equation*}
 \phi_{11}(x)=\phi_{12}(x)=0 \quad \text{for } x \in \mathbb{R}.
\end{equation*}
It follows from \cref{limM} that 
\begin{equation}\label{limM2}
 \lim_{n \to \infty}M_{\B}^n=\begin{pmatrix} 
                              0 & 0 \\
                              \nu & \theta
                             \end{pmatrix}, \quad \nu, \theta \in \mathbb{R}.
\end{equation}
Assume $\mathcal{E}_{\B} \neq \operatorname{span}\{e_2\}$. Then $\mathcal{E}_{\B}=\mathbb{R}^2$, and $M_{\B}=I_2$, since by \Cref{eigenvaluesmod} the eigenspace of $M_{\B}$ with respect to $1$ is exactly $\mathcal{E}_{\B}$.
Thus $\lim_{n \to \infty}M_{\B}^n=I_2$ in contradiction to \cref{limM2}.
\end{proof}

\subsection{Imposing the Taylor conditions}
Denote by $\tilde{\ell}_t \subsetneqq \ell_t$ the set of masks satisfying $\B \in \ell_t$ and $\mathcal{E}_{\B}=\operatorname{span}\{e_2\}$.
It follows from \Cref{hermitetaylorconv} and \Cref{formlimit} that for $\B \in \tilde{\ell}_t$, a mask of a $C^{\ell}$ {\vs}, if also $\mathcal{I}_1\B \in \tilde{\ell}_t$, then $\mathcal{I}_1\B$ is a mask of a $C^{\ell+1}$ {\vs} which is the Taylor scheme of a $HC^{\ell+2}$ Hermite scheme.
The next results
show that $\mathcal{I}_1(\tilde{\ell}_t) \subseteq \tilde{\ell}_t$ does not hold in general. Nevertheless, in the following we construct a transformation $\mathcal{R}$ such that 
$\mathcal{R}^{-1}(\mathcal{I}_1\B) \mathcal{R} \in \tilde{\ell}_t$ for $\B \in \tilde{\ell}_t$.

First we look for a canonical transformation of a mask $\B \in \ell_t$ to define $\mathcal{I}_1\B$.
\begin{lemma}\label{eigenvaluesM}
Let $\B \in \ell_t$. Then $M_{\B}$ has the eigenvalue $1$ with eigenvector $\tiny \Spvek{0;1}$ and the eigenvalue $\tfrac{1}{2}\bb^{\ast}_{11}(1)$ with eigenvector $\tiny \Spvek{1;-1}$.
A canonical transformation and its inverse are
\begin{equation*}
\overline{R}=\Big( \begin{array}{r r}0 & 1\\ 1 & -1 \end{array}\Big) \quad \text{with inverse} \quad
\overline{R}^{-1}=\Big( \begin{array}{r r}1 & 1\\ 1 & 0 \end{array}\Big).
\end{equation*}
\end{lemma}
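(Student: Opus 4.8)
The statement concerns a mask $\B\in\ell_t$, i.e.\ one satisfying the Taylor conditions (\ref{tmask1})--(\ref{tmask3}) of \Cref{taylor}. The claim is twofold: first, an explicit description of the eigenvalues and eigenvectors of the averaged matrix $M_{\B}=\tfrac12(B^0+B^1)=\tfrac12\B^{\ast}(1)$; and second, the verification that the particular $2\times 2$ matrix $\overline R$ (built from these eigenvectors) is a canonical transformation in the sense of \cref{R}, with the stated inverse. The whole proof is a short direct computation, so I would simply carry it out.

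\textbf{Step 1: compute $M_{\B}$ from the Taylor conditions.} Using \Cref{lem:maskproperties} and the block notation \cref{blockmatrix}, write $M_{\B}=\tfrac12\B^{\ast}(1)=\tfrac12\begin{pmatrix}\bb^{\ast}_{11}(1)&\bb^{\ast}_{12}(1)\\ \bb^{\ast}_{21}(1)&\bb^{\ast}_{22}(1)\end{pmatrix}$. Now invoke the Taylor conditions: (\ref{tmask1}) gives $\bb^{\ast}_{12}(1)=0$, (\ref{tmask2}) gives $\bb^{\ast}_{22}(1)=2$, and (\ref{tmask3}) gives $\bb^{\ast}_{21}(1)=2-\bb^{\ast}_{11}(1)$. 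Hence
\begin{equation*}
M_{\B}=\frac12\begin{pmatrix}\bb^{\ast}_{11}(1)&0\\[0.15cm] 2-\bb^{\ast}_{11}(1)&2\end{pmatrix}.
\end{equation*}
This is lower triangular, so its eigenvalues are the diagonal entries $\tfrac12\bb^{\ast}_{11}(1)$ and $1$.

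\textbf{Step 2: identify the eigenvectors.} For the eigenvalue $1$: solving $(M_{\B}-I)v=0$ gives $\tfrac12(\bb^{\ast}_{11}(1)-2)v_1=0$ and $\tfrac12(2-\bb^{\ast}_{11}(1))v_1=0$, both satisfied by $v_1=0$, so $\Spvek{0;1}$ is an eigenvector (this also matches \Cref{rem:e2}, since $e_2\in\mathcal{E}_{\B}$ when $\B\in\ell_t$). For the eigenvalue $\tfrac12\bb^{\ast}_{11}(1)$: one checks directly that $M_{\B}\Spvek{1;-1}=\tfrac12\Spvek{\bb^{\ast}_{11}(1);\,(2-\bb^{\ast}_{11}(1))-2}=\tfrac12\bb^{\ast}_{11}(1)\Spvek{1;-1}$, as claimed.

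\textbf{Step 3: verify $\overline R$ is a canonical transformation and check the inverse.} By \cref{R}, a canonical transformation has the eigenvectors spanning $\mathcal{E}_{\B}$ (here $\Spvek{0;1}$, recalling from \Cref{eigen} and the preceding discussion that for $\B\in\ell_t$ one has $e_2\in\mathcal{E}_{\B}$, and generically $k=1$) as its first column(s), completed by a basis $Q$ of the complementary invariant space — which is spanned by $\Spvek{1;-1}$. So $\overline R=\bigl(\begin{smallmatrix}0&1\\1&-1\end{smallmatrix}\bigr)$ is indeed of the required form. Finally, multiplying $\bigl(\begin{smallmatrix}0&1\\1&-1\end{smallmatrix}\bigr)\bigl(\begin{smallmatrix}1&1\\1&0\end{smallmatrix}\bigr)=\bigl(\begin{smallmatrix}1&0\\0&1\end{smallmatrix}\bigr)$ confirms the stated inverse. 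There is essentially no obstacle here — the only subtlety worth a sentence is flagging that the identification of $\Spvek{0;1}$ with $\mathcal{E}_{\B}$ (rather than with only part of a larger eigenspace) relies on the genericity assumption $k=1$, matching the setting of the subsequent subsection; the degenerate case $\bb^{\ast}_{11}(1)=2$ is consistent with the $k=2$ case of \Cref{trivialcase} and does not affect the eigenvalue/eigenvector list as stated.
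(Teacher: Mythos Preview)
Your proof is correct and follows essentially the same approach as the paper: compute $M_{\B}=\tfrac12\B^{\ast}(1)$ as a lower-triangular matrix from the Taylor conditions, read off the eigenvalues from the diagonal, and verify the two eigenvectors directly. The only cosmetic difference is that you substitute condition~(\ref{tmask3}) into the $(2,1)$-entry of $M_{\B}$ up front, whereas the paper leaves that entry as $\tfrac12\bb^{\ast}_{21}(1)$ and invokes~(\ref{tmask3}) only at the moment of checking $M_{\B}\bigl(\begin{smallmatrix}1\\-1\end{smallmatrix}\bigr)$; your extra remarks on the inverse and the degenerate case $\bb^{\ast}_{11}(1)=2$ go slightly beyond what the paper records but are fine.
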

\begin{proof}
From the Taylor conditions we immediately get
\begin{equation*}
M_{\B}=\tfrac{1}{2}(B^0+B^1)=\tfrac{1}{2}\B^{\ast}(1)=\begin{pmatrix}\tfrac{1}{2}\bb_{11}^{\ast}(1) & 0 \\[0.2cm] \tfrac{1}{2}\bb_{21}^{\ast}(1) & 1 \end{pmatrix}.
\end{equation*}
The eigenvalues of $M_{\B}$ can now be read from the diagonal. Also, it is clear that ${0 \choose 1}$ is an eigenvector with eigenvalue $1$. For the other eigenvector we use the Taylor condition (\ref{tmask3}) (in Definition \ref{taylor}) in the third equality below, and obtain
\begin{align*}
M_{\B}\Big(\begin{array}{r}1 \\ -1\end{array}\Big)&=\begin{pmatrix}\tfrac{1}{2}\bb_{11}^{\ast}(1) & 0 \\[0.2cm] \tfrac{1}{2}\bb_{21}^{\ast}(1) & 1 \end{pmatrix}\Big(\begin{array}{r}1 \\ -1\end{array}\Big)=
\begin{pmatrix}\tfrac{1}{2}\bb_{11}^{\ast}(1) \\[0.2cm] \tfrac{1}{2}\bb_{21}^{\ast}(1)-1\end{pmatrix}=
\begin{pmatrix}\tfrac{1}{2}\bb_{11}^{\ast}(1) \\[0.2cm] -\tfrac{1}{2}\bb_{11}^{\ast}(1)\end{pmatrix}\\
&=\tfrac{1}{2}\bb_{11}^{\ast}(1)\Big(\begin{array}{r}1 \\ -1\end{array}\Big). \qedhere
\end{align*}
The structure of $\overline{R}$ follows directly from \cref{R}.
\end{proof}

\Cref{eigenvaluesM} leads to
\begin{theorem}\label{taylortotaylor}
Let $\B \in \tilde{\ell}_t$ and let its associated vector scheme $S_{\B}$ be convergent. Let $\mathcal{I}_1$ be the smoothing operator for {\vs}s in \cref{vec:soperator}.
Then $\mathcal{I}_1\B \in \tilde{\ell}_t$ if and only if the Laurent polynomial $\bb_{11}^{\ast}(z)+\bb_{21}^{\ast}(z)-\bb_{12}^{\ast}(z)-\bb_{22}^{\ast}(z)$ has a root at 1 of multiplicity at least $2$. 
\end{theorem}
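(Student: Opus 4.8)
The plan is to reduce everything to the explicit canonical transformation supplied by \Cref{eigenvaluesM} together with the formula \cref{defI} for $\mathcal{I}_1$, and to observe that membership of $\mathcal{I}_1\B$ in $\tilde{\ell}_t$ collapses to a single scalar identity, which turns out to be $p'(1)=0$, where $p(z):=\bb_{11}^{\ast}(z)+\bb_{21}^{\ast}(z)-\bb_{12}^{\ast}(z)-\bb_{22}^{\ast}(z)$.

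First I would fix the transformation entering \cref{vec:soperator}. Since $\B\in\tilde{\ell}_t$ we have $k=\dim\mathcal{E}_{\B}=1$ with $\{e_2\}$ a basis of $\mathcal{E}_{\B}$; as $S_{\B}$ converges, \Cref{alggeomulti} forces the remaining eigenvalue $\tfrac12\bb_{11}^{\ast}(1)$ of $M_{\B}$ to have modulus $<1$, so $\overline{R}=\bigl(\begin{smallmatrix}0&1\\1&-1\end{smallmatrix}\bigr)$ from \Cref{eigenvaluesM} is a genuine canonical transformation in the sense of \cref{R}, and $\mathcal{I}_1\B=\overline{R}(\mathcal{I}_1\overline{\B})\overline{R}^{-1}$. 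A one-line matrix multiplication then gives
\begin{equation*}
\overline{\B}^{\ast}(z)=\overline{R}^{-1}\B^{\ast}(z)\overline{R}
=\begin{pmatrix} \bb_{12}^{\ast}(z)+\bb_{22}^{\ast}(z) & p(z)\\[0.1cm] \bb_{12}^{\ast}(z) & \bb_{11}^{\ast}(z)-\bb_{12}^{\ast}(z)\end{pmatrix},
\end{equation*}
the crucial point being $\overline{\B}^{\ast}_{12}(z)=p(z)$. From the Taylor conditions of \Cref{taylor} I read off $p(1)=\bb_{11}^{\ast}(1)+\bb_{21}^{\ast}(1)-\bb_{12}^{\ast}(1)-\bb_{22}^{\ast}(1)=2-0-2=0$ unconditionally (consistent with $\overline{\B}\in\ell_b^1$, cf.\ \Cref{propertiesbar}) and $\overline{\B}^{\ast}_{11}(1)=\bb_{12}^{\ast}(1)+\bb_{22}^{\ast}(1)=2$, so I may write $p(z)=(z^{-1}-1)\C^{\ast}(z)$ with $\C^{\ast}$ a Laurent polynomial.

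Next I would dispose of the automatic part of $\tilde{\ell}_t$. By \Cref{cor:tomas}, $\mathcal{E}_{\mathcal{I}_1\overline{\B}}=\operatorname{span}\{e_1\}$, and since $\mathcal{I}_1\B=\overline{R}(\mathcal{I}_1\overline{\B})\overline{R}^{-1}$ we get $\mathcal{E}_{\mathcal{I}_1\B}=\overline{R}\,\mathcal{E}_{\mathcal{I}_1\overline{\B}}=\operatorname{span}\{\overline{R}e_1\}=\operatorname{span}\{e_2\}$ (using \Cref{eigen} and $\overline{R}e_1=e_2$) --- again unconditionally. In particular $e_2\in\mathcal{E}_{\mathcal{I}_1\B}$, so by \Cref{rem:e2} the Taylor conditions (\ref{tmask1}) and (\ref{tmask2}) hold for $\mathcal{I}_1\B$. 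Hence $\mathcal{I}_1\B\in\tilde{\ell}_t$ is equivalent to condition (\ref{tmask3}) for $\mathcal{I}_1\B$, i.e.\ that the sum of the first column of $(\mathcal{I}_1\B)^{\ast}(1)$ equals $2$. Writing $e=(1,1)^{\top}$ and using $e^{\top}\overline{R}=e_1^{\top}$ and $\overline{R}^{-1}e_1=e$, this column sum equals $e^{\top}\overline{R}(\mathcal{I}_1\overline{\B})^{\ast}(1)\overline{R}^{-1}e_1=(\mathcal{I}_1\overline{\B})^{\ast}_{11}(1)+(\mathcal{I}_1\overline{\B})^{\ast}_{12}(1)$, which by \cref{defI} with $k=1$ is $\overline{\B}^{\ast}_{11}(1)+\tfrac12\C^{\ast}(1)=2+\tfrac12\C^{\ast}(1)$ (equivalently, just read the top row of the block matrix $(\mathcal{I}_1\overline{\B})^{\ast}(1)$ in \cref{IBbar}). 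Therefore (\ref{tmask3}) for $\mathcal{I}_1\B$ holds iff $\C^{\ast}(1)=0$; and since $\C^{\ast}(z)=z\,p(z)/(1-z)$ with $p(1)=0$, Taylor-expanding $p$ about $z=1$ and using $z^{-1}-1=-(z-1)+O((z-1)^2)$ gives $\C^{\ast}(1)=-p'(1)$. Combining, $\mathcal{I}_1\B\in\tilde{\ell}_t\iff p'(1)=0$, which together with the automatic $p(1)=0$ is precisely the statement that $p$ has a root of multiplicity at least $2$ at $z=1$.

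I expect the only genuine bookkeeping to be the matrix computation of $\overline{\B}^{\ast}(z)$ --- in particular confirming $\overline{\B}^{\ast}_{12}(z)=p(z)$ and $\overline{\B}^{\ast}_{11}(1)=2$ --- and checking that condition (\ref{tmask3}), phrased as a first-column sum of $(\mathcal{I}_1\B)^{\ast}(1)$, transports correctly through the conjugation by $\overline{R}$; everything else is direct substitution into \cref{defI}, the Taylor conditions, and one elementary limit.
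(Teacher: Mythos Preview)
Your proposal is correct and follows essentially the same route as the paper: both compute $\overline{\B}^{\ast}(z)$ via the canonical transformation of \Cref{eigenvaluesM}, identify $\overline{\B}^{\ast}_{12}(z)$ with $p(z)$, factor $p(z)=(z^{-1}-1)\boldsymbol{\kappa}^{\ast}(z)$, and reduce membership in $\tilde{\ell}_t$ to $\boldsymbol{\kappa}^{\ast}(1)=0$. Your version is slightly more streamlined in two places: you invoke \Cref{rem:e2} to get Taylor conditions (\ref{tmask1}) and (\ref{tmask2}) directly from $\mathcal{E}_{\mathcal{I}_1\B}=\operatorname{span}\{e_2\}$ rather than computing $(\mathcal{I}_1\B)^{\ast}(\pm 1)$ in full, and you transport condition (\ref{tmask3}) through the conjugation via the identities $e^{\top}\overline{R}=e_1^{\top}$, $\overline{R}^{-1}e_1=e$ instead of writing out the entire matrix $(\mathcal{I}_1\B)^{\ast}(1)$ as the paper does in \cref{IB}.
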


\begin{proof}
From \Cref{rem:span} we know that $\mathcal{E}_{\mathcal{I}_1\B}=\mathcal{E}_{\B}=\operatorname{span}\{e_2\}$. Furthermore,
recall from \cref{vec:soperator} that $\mathcal{I}_1\B=\overline{R}(\mathcal{I}_1\overline{\B})\overline{R}^{-1}$ with $\overline{\B}=\overline{R}^{-1}\B \overline{R}.$ In \Cref{eigenvaluesM} a canonical transformation $\overline{R}$ is computed.
Therefore $\overline{\B}$ is given by
\begin{equation}\label{eqproof}
\overline{\B}=\Big(\begin{array}{c c}
\overline{\bb}_{11}& \overline{\bb}_{12}\\
\overline{\bb}_{21} & \overline{\bb}_{22}
\end{array}\Big)=
\Big(\begin{array}{c c}
\bb_{12}+\bb_{22} & \bb_{11}+\bb_{21}-\bb_{12}-\bb_{22}\\
\bb_{12} & \bb_{11}-\bb_{12}
\end{array}\Big).
\end{equation}
The parts of the Taylor conditions concerning the elements of $\B^{\ast}(1)$ imply that the symbol $\overline{\bb}^{\ast}_{12}(z)$ has a root at 1.
Therefore there exists a Laurent polynomial $\boldsymbol{\kappa}^{\ast}(z)$ such that $\overline{\bb}^{\ast}_{12}(z)=(z^{-1}-1)\boldsymbol{\kappa}^{\ast}(z)$.
Combining \cref{eqproof} with \cref{defI} we obtain
\begin{equation*}
(\mathcal{I}_1\overline{\B})^{\ast}(1)=\Big(\begin{array}{c c}
2 & \tfrac{1}{2}\boldsymbol{\kappa}^{\ast}(1)\\[0.2cm]
0 & \tfrac{1}{2}\bb^{\ast}_{11}(1)
\end{array}\Big) \quad \text{and} \quad
(\mathcal{I}_1\overline{\B})^{\ast}(-1)=\Big(\begin{array}{c c}
0 & \tfrac{1}{2}\boldsymbol{\kappa}^{\ast}(-1)\\[0.2cm]
0 & \tfrac{1}{2}\bb^{\ast}_{11}(-1)
\end{array}\Big).
\end{equation*}
Therefore
\begin{align}\label{IB}
(\mathcal{I}_1\B)^{\ast}(1)&=\overline{R}(\mathcal{I}_1\overline{\B})^{\ast}(1)\overline{R}^{-1}=\Big(\begin{array}{c c}
\tfrac{1}{2}\bb_{11}^{\ast}(1) & 0\\[0.1cm]
2+\tfrac{1}{2}(\boldsymbol{\kappa}^{\ast}(1)-\bb^{\ast}_{11}(1)) & 2
\end{array}\Big) \quad \text{and} \\ \nonumber
(\mathcal{I}_1\B)^{\ast}(-1)&=\overline{R}(\mathcal{I}_1\overline{\B})^{\ast}(-1)\overline{R}^{-1}=\Big(\begin{array}{c c}
\tfrac{1}{2}\bb^{\ast}_{11}(-1) & 0\\[0.2cm]
\tfrac{1}{2}(\boldsymbol{\kappa}^{\ast}(-1)-\bb^{\ast}_{11}(-1)) & 0
\end{array}\Big).
\end{align}
By \cref{IB}, (\ref{tmask1}) and (\ref{tmask2}) of the Taylor conditions in \Cref{taylor} are satisfied by $\mathcal{I}_1{\B}$.
The mask $\mathcal{I}_1{\B}$ satisfies (\ref{tmask3}) of the Taylor conditions if and only if $\boldsymbol{\kappa}^{\ast}(1)=0$. By the definition of $\boldsymbol{\kappa}$,
this is equivalent to the Laurent polynomial $\overline{\bb}^{\ast}_{12}(z)=\bb_{11}^{\ast}(z)+\bb_{21}^{\ast}(z)-\bb_{12}^{\ast}(z)-\bb_{22}^{\ast}(z)$
having a root of multiplicity $2$ at $1$.
\end{proof}

Thus, in general, $\mathcal{I}_1(\tilde{\ell}_t)\nsubseteq \tilde{\ell}_t$. In the next two lemmas we solve this problem.
\begin{lemma}\label{transformtotaylor}
Let $\B$ be a mask of a converging {\vs} satisfying $\mathcal{E}_{\B}=\operatorname{span}\{e_2\}$ and $\bb_{11}^{\ast}(1)\neq 2$. Then there exists a transformation $\mathcal{R}$ such that $\widetilde{\B}=\mathcal{R}^{-1}\B \mathcal{R} \in \tilde{\ell}_t$.
\end{lemma}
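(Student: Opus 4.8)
The plan is to take for $\mathcal{R}$ a matrix that fixes the direction of $e_2$, so that $\mathcal{E}_{\widetilde{\B}}=\operatorname{span}\{e_2\}$ is inherited automatically, and then to spend the single remaining scalar degree of freedom of such an $\mathcal{R}$ on enforcing condition (\ref{tmask3}) of \Cref{taylor}. First I would unpack the hypotheses: by \Cref{eigen}, $\mathcal{E}_{\B}=\operatorname{span}\{e_2\}$ forces $\B^{\ast}(1)e_2=2e_2$ and $\B^{\ast}(-1)e_2=0$, so the second columns of $\B^{\ast}(1)$ and $\B^{\ast}(-1)$ are $\left(\begin{smallmatrix}0\\2\end{smallmatrix}\right)$ and $\left(\begin{smallmatrix}0\\0\end{smallmatrix}\right)$; in particular $\B^{\ast}(1)=\left(\begin{smallmatrix}\bb_{11}^{\ast}(1)&0\\ \bb_{21}^{\ast}(1)&2\end{smallmatrix}\right)$ is lower triangular. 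Since $S_{\B}$ converges, \Cref{eigenvaluesmod} and \Cref{alggeomulti} show that the eigenvalue $1$ of $M_{\B}=\tfrac12\B^{\ast}(1)$ is simple with eigenspace $\operatorname{span}\{e_2\}$ and all its other eigenvalues have modulus $<1$; hence $\mu:=\bb_{11}^{\ast}(1)$ is the other eigenvalue of $\B^{\ast}(1)$ and $|\mu|<2$, so in particular $\mu\neq 2$ (this is the assumed hypothesis, which is in fact automatic here).

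Next I would fix the shape of $\mathcal{R}$. Under conjugation $B^0,B^1$ become $\mathcal{R}^{-1}B^0\mathcal{R},\mathcal{R}^{-1}B^1\mathcal{R}$, so $\mathcal{E}_{\widetilde{\B}}=\mathcal{R}^{-1}\mathcal{E}_{\B}$; to keep this equal to $\operatorname{span}\{e_2\}$, $e_2$ must be an eigenvector of $\mathcal{R}$. I will therefore take $\mathcal{R}=\left(\begin{smallmatrix}1&0\\ s&1\end{smallmatrix}\right)$ with $s\in\mathbb{R}$ a free parameter (so $\mathcal{R}$ is invertible for every $s$). For any such $\mathcal{R}$ we have $\mathcal{E}_{\widetilde{\B}}=\operatorname{span}\{e_2\}$, and $S_{\widetilde{\B}}$ is again convergent by \Cref{Vconv}; moreover, by \Cref{rem:e2}, $\mathcal{E}_{\widetilde{\B}}=\operatorname{span}\{e_2\}$ already gives conditions (\ref{tmask1}) and (\ref{tmask2}) of \Cref{taylor} for $\widetilde{\B}$. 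So the only Taylor condition still to be arranged is (\ref{tmask3}).

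A direct $2\times2$ computation then gives $\widetilde{\B}^{\ast}(1)=\mathcal{R}^{-1}\B^{\ast}(1)\mathcal{R}=\left(\begin{smallmatrix}\mu&0\\ \bb_{21}^{\ast}(1)+s(2-\mu)&2\end{smallmatrix}\right)$, so condition (\ref{tmask3}), i.e.\ $\widetilde{\bb}_{11}^{\ast}(1)+\widetilde{\bb}_{21}^{\ast}(1)=2$, becomes the linear equation $\mu+\bb_{21}^{\ast}(1)+s(2-\mu)=2$ in $s$. Its coefficient $2-\mu=2-\bb_{11}^{\ast}(1)$ is nonzero, so there is a unique solution $s=\dfrac{2-\bb_{11}^{\ast}(1)-\bb_{21}^{\ast}(1)}{2-\bb_{11}^{\ast}(1)}$; with this choice $\widetilde{\B}=\mathcal{R}^{-1}\B\mathcal{R}$ satisfies all three Taylor conditions and has $\mathcal{E}_{\widetilde{\B}}=\operatorname{span}\{e_2\}$, i.e.\ $\widetilde{\B}\in\tilde{\ell}_t$, which is the claim.

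The individual computations are routine; the substantive point is the bookkeeping that, after $\mathcal{R}$ is forced to preserve $\operatorname{span}\{e_2\}$, exactly one Taylor condition is left open and it is controlled by the single parameter $s$ through the factor $2-\bb_{11}^{\ast}(1)$. That is precisely where the hypothesis $\bb_{11}^{\ast}(1)\neq2$ enters, and it is the only real obstacle in the proof.
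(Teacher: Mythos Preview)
Your proof is correct and follows essentially the same route as the paper: choose $\mathcal{R}=\left(\begin{smallmatrix}1&0\\ s&1\end{smallmatrix}\right)$ so that $e_2$ (and hence $\mathcal{E}_{\B}$) is preserved, then solve the single linear equation coming from Taylor condition~(\ref{tmask3}) for $s$, using $\bb_{11}^{\ast}(1)\neq 2$; your value of $s$ agrees with the paper's $\eta=1+\tfrac{b}{a-2}$. Your additional remark that $\bb_{11}^{\ast}(1)\neq 2$ is automatic for a convergent $S_{\B}$ with $\mathcal{E}_{\B}=\operatorname{span}\{e_2\}$ is a valid observation the paper defers to a later corollary.
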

\begin{proof}
First we note that by \Cref{rem:e2}, the mask ${\B}$ satisfies (\ref{tmask1}) and (\ref{tmask2}) of the Taylor conditions and obtain
\begin{equation*}
\B^{\ast}(1)=\begin{pmatrix}
			a & 0\\
			b & 2
			\end{pmatrix},
\end{equation*}
with $a,b \in \mathbb{R}$ and $a\neq 2$ by the assumption of the lemma. To impose (\ref{tmask3}) of the Taylor conditions we take $\mathcal{R}$
with a second column $e_2$ in order to retain the above second columns.
A normalized choice of the first column of $\mathcal{R}$ yields
\begin{equation}\label{mathcalR}
\mathcal{R}=\Big(\begin{array}{r r}
			1 & 0\\
			\eta & 1
			\end{array}\Big),
\quad 
\mathcal{R}^{-1}=\Big(\begin{array}{r r}
			1 & 0\\
			-\eta & 1
			\end{array}\Big),
\end{equation}
and we obtain
\begin{equation*}
\widetilde{\B}^{\ast}(1)=\begin{pmatrix}
			a & 0\\
			(2-a)\eta +b & 2
			\end{pmatrix}.
\end{equation*}
To satisfy (\ref{tmask3}) of the Taylor conditions $(2-a)\eta +b+a=2$. Therefore we choose $\eta=1+\frac{b}{a-2}$. 
From the form of $\widetilde{\B}^{\ast}(1)$ and since $a\neq 2$, we see that $\mathcal{E}_{\widetilde{\B}}=\operatorname{span}\{e_2\}$.
\end{proof}

Next we show that we can apply the smoothing procedure and transform the resulting mask to a mask in $\tilde{\ell}_t$.
\begin{corollary}\label{tilde}
Let $\B \in \tilde{\ell}_t$ such that $S_{\B}$ is a $C^{\ell}$ {\vs}, for $\ell \geq 0$. Then $\widetilde{\mathcal{I}_1(\B)}\in \tilde{\ell}_t$ and $S_{\widetilde{\mathcal{I}_1(\B)}}$ is a $C^{\ell+1}$ {\vs}.
\end{corollary}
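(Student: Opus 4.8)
The plan is to smooth $\B$ once with the vector \Cref{algorithm}, obtaining $\mathcal{I}_1\B$, and then conjugate the result by the similarity furnished by \Cref{transformtotaylor} so that it re-enters $\tilde{\ell}_t$. First I would check that \Cref{algorithm} applies to $\B$: since $\B\in\tilde{\ell}_t$ we have $\mathcal{E}_{\B}=\operatorname{span}\{e_2\}$, hence $k=\dim\mathcal{E}_{\B}=1$, and by hypothesis $S_{\B}$ is $C^{\ell}$, in particular convergent. Running the procedure produces the mask $\mathcal{I}_1\B$ of \cref{vec:soperator}; by the analysis preceding \Cref{algorithm} — namely \Cref{cor:tomas}, \Cref{smoothvector} and \Cref{Vconv} — the scheme $S_{\mathcal{I}_1\B}$ is $C^{\ell+1}$, in particular convergent, and by \Cref{rem:span} we have $\mathcal{E}_{\mathcal{I}_1\B}=\mathcal{E}_{\B}=\operatorname{span}\{e_2\}$.

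Next I would invoke \Cref{transformtotaylor} with $\mathcal{I}_1\B$ playing the role of the input mask. Two of its three hypotheses — convergence of $S_{\mathcal{I}_1\B}$ and $\mathcal{E}_{\mathcal{I}_1\B}=\operatorname{span}\{e_2\}$ — were established in the first step, so it remains to verify $(\mathcal{I}_1\B)^{\ast}_{11}(1)\neq 2$. For this, \cref{IB} in the proof of \Cref{taylortotaylor} yields $(\mathcal{I}_1\B)^{\ast}_{11}(1)=\tfrac{1}{2}\bb^{\ast}_{11}(1)$, which by \Cref{eigenvaluesM} is an eigenvalue of $M_{\B}=\tfrac{1}{2}\B^{\ast}(1)$; since $S_{\B}$ is convergent, \Cref{eigenvaluesmod} and \Cref{alggeomulti} force every eigenvalue of $M_{\B}$ to equal $1$ or to have modulus strictly smaller than $1$, so $\tfrac{1}{2}\bb^{\ast}_{11}(1)\neq 2$. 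Hence \Cref{transformtotaylor} produces a transformation $\mathcal{R}$ with $\widetilde{\mathcal{I}_1(\B)}=\mathcal{R}^{-1}(\mathcal{I}_1\B)\mathcal{R}\in\tilde{\ell}_t$. Finally, $\widetilde{\mathcal{I}_1(\B)}$ is similar to $\mathcal{I}_1\B$, so \Cref{Vconv} gives that $S_{\widetilde{\mathcal{I}_1(\B)}}$ is $C^{\ell+1}$, which completes the argument.

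The only step that is not mere assembly of earlier results is the inequality $(\mathcal{I}_1\B)^{\ast}_{11}(1)\neq 2$, and I expect this to be the main (minor) obstacle: it needs the explicit value of $(\mathcal{I}_1\B)^{\ast}_{11}(1)$ extracted from the proof of \Cref{taylortotaylor}, together with the spectral structure of $M_{\B}$ for a convergent scheme. It is worth remarking in passing that $\widetilde{\mathcal{I}_1(\B)}$ need not coincide with $\mathcal{I}_1(\widetilde{\B})$, and that iterating the construction to reach $C^{\ell+2}$ requires recomputing a canonical transformation, just as in the vector case.
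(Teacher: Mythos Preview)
Your proposal is correct and follows essentially the same route as the paper: establish $\mathcal{E}_{\mathcal{I}_1\B}=\operatorname{span}\{e_2\}$ via \Cref{rem:span}, extract $(\mathcal{I}_1\B)^{\ast}_{11}(1)=\tfrac{1}{2}\bb^{\ast}_{11}(1)$ from \cref{IB}, use \Cref{eigenvaluesM} and \Cref{alggeomulti} to conclude this value is not $2$, and then apply \Cref{transformtotaylor}. If anything you are slightly more careful than the paper in explicitly verifying the convergence hypothesis of \Cref{transformtotaylor} for $\mathcal{I}_1\B$ and in appealing to \Cref{Vconv} for the final smoothness claim.
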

\begin{proof}
It follows from \Cref{rem:span} that $\mathcal{E}_{\mathcal{I}_1\B}=\mathcal{E}_{\B}=\operatorname{span}\{e_2\}$.
\Cref{IB} implies $(\mathcal{I}_1\B)^{\ast}_{11}(1)=\tfrac{1}{2}\bb_{11}^{\ast}(1)$. From \Cref{eigenvaluesM} we know that $\tfrac{1}{2}\bb_{11}^{\ast}(1)$ is an eigenvalue of $M_{\B}$. 
By \Cref{alggeomulti}, $\frac{1}{2}|\bb_{11}^{\ast}(1)|\leq 1$. In particular $(\mathcal{I}_1\B)^{\ast}_{11}(1)\neq 2$. Therefore, $\mathcal{I}_1\B$ satisfies the conditions of \Cref{transformtotaylor} and with the transformation
$\mathcal{R}$ in \cref{mathcalR}, $\mathcal{R}^{-1}(\mathcal{I}_1\B) \mathcal{R} \in \tilde{\ell}_t$. The statement about smoothness follows from the construction of $\mathcal{I}_1$ in \cref{vec:soperator}.
\end{proof}
\subsection{A procedure for increasing the smoothness of Hermite schemes}
\Cref{taylortotaylor} and \Cref{tilde} allow to define the following procedure for increasing the smoothness of {\hs}s:
\begin{alg}\label{alghermite}
The input is a mask $\A$ satisfying the spectral condition (\Cref{spectralcond}). Furthermore we assume that its Taylor scheme is $C^{\ell-1}$ for $\ell \geq 1$ and that the limit functions have vanishing first component for all input data (this implies that $S_{\A}$ is $HC^{\ell}$).
The output is a mask $\mathbf{C}$ which satisfies the spectral condition and its associated Hermite scheme $S_{\mathbf{C}}$ is $HC^{\ell+1}$.
\begin{enumerate}
\item Compute the Taylor scheme $\partial_t \A$ (\Cref{masklemma1}).
\item Apply \Cref{algorithm} and \Cref{transformtotaylor} to obtain  $\B=\widetilde{\mathcal{I}_1(\partial_t\A)}$.
\item Define $\mathbf{C}=\mathcal{I}_t(\mathbf{B})$ (\Cref{masklemma2}). 
\end{enumerate}
\end{alg}

In the following we execute \Cref{alghermite} for a general mask $\A$ satisfying the assumptions of the procedure, and present explicitly $\C^{\ast}(z)$.

\renewcommand{\bc}{\boldsymbol{\gamma}}

From the definition of $\eta$ in the proof of \Cref{transformtotaylor} it is easy to see that $\eta=\frac{\ab^{\ast}_{12}(1)}{2-\ab_{22}^{\ast}(1)}$. 
This is well-defined, since $M_{\partial_t\A}$ has $\ab^{\ast}_{22}(1)$ as an eigenvalue. By \Cref{alggeomulti}, $\ab^{\ast}_{22}(1)\neq 2$.
Then with $\zeta=\eta+1$ we get
\begin{align} \label{c1}
\bc^{\ast}_{11}(z) = & \ \tfrac{1}{2}(z^{-1}+1)\Big(
\ab^{\ast}_{12}(z)\Big(({\zeta}-{\zeta}^2)z^{-3}+{\zeta}^2z^{-2}+({\zeta}^2-1)z^{-1}-({\zeta}^2+{\zeta})\Big)\\ \nonumber
&+\ab_{11}^{\ast}(z)\Big({\zeta}(z^{-1}-1)(1-{\zeta})+{\zeta}\Big)
+\ab_{22}^{\ast}(z)({\zeta}(z^{-2}-1)-1)({\zeta}-1)\\ \nonumber
&+\ab_{21}^{\ast}(z)({\zeta}^2-{\zeta})
\Big),\\ \nonumber
\bc^{\ast}_{12}(z) = & \ \tfrac{1}{2}\Big(
\ab^{\ast}_{12}(z)\Big((1-{\zeta})^2z^{-3}+{\zeta}(1-{\zeta})z^{-2}+{\zeta}(1-{\zeta})z^{-1}+{\zeta}^2\Big)\\ \nonumber
&+ \ab^{\ast}_{22}(z)\Big(-(z^{-2}-1)(1-{\zeta})^2+{\zeta}-1 \Big)\\ \nonumber
&+\ab^{\ast}_{11}(z)\Big((z^{-1}-1)(1-{\zeta})^2+1-{\zeta} \Big)
 -\ab^{\ast}_{21}(z)(1-{\zeta})^2
\Big)\Big/ (z^{-1}-1),\\ \nonumber
\bc^{\ast}_{21}(z)= & \ \tfrac{1}{2}(z^{-2}-1)\Big(
\ab^{\ast}_{12}(z)\Big(-{\zeta}^2z^{-3}+({\zeta}+{\zeta}^2)(z^{-2}+z^{-1})-({\zeta}+1)^2 \Big)\\ \nonumber
&+\ab^{\ast}_{11}(z){\zeta}(1-{\zeta}(z^{-1}-1))+\ab^{\ast}_{22}(z){\zeta}({\zeta}(z^{-2}-1)-1)+{\zeta}^2\ab^{\ast}_{21}(z)
\Big),\\ \nonumber
\bc^{\ast}_{22}(z) = & \ \tfrac{1}{2}\Big(
\ab^{\ast}_{12}(z)\Big(({\zeta}^2-{\zeta})z^{-3}+(1-{\zeta}^2)z^{-2}-{\zeta}^2z^{-1}+({\zeta}^2+{\zeta}) \Big)\\ \nonumber
& +\ab^{\ast}_{11}(z)(1-{\zeta})(1-{\zeta}(z^{-1}-1))+\ab^{\ast}_{22}(z){\zeta}((1-{\zeta})(z^{-2}-1)+1)\\ \nonumber
&+\ab^{\ast}_{21}(z)({\zeta}-{\zeta}^2)
\Big).
\end{align}
In the special case $\ab^{\ast}_{12}(1)=0$, $\zeta=1$, $\C^{\ast}(z)$ reduces to
\begin{align}\label{c2}
\bc^{\ast}_{11}(z) = & \ \tfrac{1}{2}(z^{-1}+1)\Big((z^{-2}-2)\ab^{\ast}_{12}(z)+\ab_{11}^{\ast}(z)\Big),\\ \nonumber
\bc^{\ast}_{12}(z) = & \ \tfrac{1}{2}\frac{\ab^{\ast}_{12}(z)}{(z^{-1}-1)},\\ \nonumber
\bc^{\ast}_{21}(z)= & \ \tfrac{1}{2}(z^{-2}-1)\Big(\ab_{21}^{\ast}(z)-\ab_{11}^{\ast}(z)(z^{-1}-2) \\ \nonumber
&+\ab_{22}^{\ast}(z)(z^{-2}-2)-\ab_{12}^{\ast}(z)(z^{-1}-2)(z^{-2}-2)\Big),\\ \nonumber
\bc^{\ast}_{22}(z) = & \ \tfrac{1}{2}(\ab^{\ast}_{22}(z)-(z^{-1}-2)\ab_{12}^{\ast}(z)).
\end{align}
With the explicit form of $\C$, we can prove
\begin{lemma}\label{smoothspectral}
 Let $\varphi_{\A}$ be the constant corresponding to the spectral condition in \cref{spectral} satisfied by $\A$. Then the constant corresponding to the spectral condition satisfied by $\C$ is $\varphi_{\C}=\varphi_{\A}-\frac{1}{2}$.
\end{lemma}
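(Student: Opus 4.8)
The plan is to reduce the statement to two derivative identities for the blocks of $\B=\widetilde{\mathcal{I}_1(\partial_t\A)}$ and then to track these through the operations $\partial_t$, $\mathcal{I}_1$ and the conjugation $\mathcal{R}$ that together turn $\A$ into $\C$. The key remark is that the proof of \Cref{masklemma2} yields more than the preservation of the spectral condition: it produces the resulting constant explicitly, namely
\begin{equation*}
\varphi_{\mathcal{I}_t\B}=\tfrac12\bigl({\bb^{\ast}_{12}}'(1)+{\bb^{\ast}_{22}}'(1)-1\bigr)\qquad\text{for every }\B\in\ell_t .
\end{equation*}
Applied to $\C=\mathcal{I}_t(\B)$ this gives $\varphi_{\C}=\tfrac12\bigl({\bb^{\ast}_{12}}'(1)+{\bb^{\ast}_{22}}'(1)-1\bigr)$, and applied to $\A=\mathcal{I}_t(\partial_t\A)$ (legitimate since $\partial_t\A\in\ell_t$ by \Cref{masklemma1}, and $\mathcal{I}_t(\partial_t\A)=\A$) it gives $\varphi_{\A}=\tfrac12\bigl({(\partial_t\A)^{\ast}_{12}}'(1)+{(\partial_t\A)^{\ast}_{22}}'(1)-1\bigr)$. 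Hence it suffices to prove ${\bb^{\ast}_{12}}'(1)=0$ and ${\bb^{\ast}_{22}}'(1)={(\partial_t\A)^{\ast}_{12}}'(1)+{(\partial_t\A)^{\ast}_{22}}'(1)-1$; the latter reads ${\bb^{\ast}_{22}}'(1)=2\varphi_{\A}$, and then $\varphi_{\C}=\tfrac12(0+2\varphi_{\A}-1)=\varphi_{\A}-\tfrac12$.

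Abbreviate $\mathbf{G}=\partial_t\A$, with blocks $\mathbf{g}_{ij}$; by \Cref{masklemma1} it satisfies the Taylor conditions, so in particular $\mathbf{g}^{\ast}_{12}(1)=0$ and $\mathbf{g}^{\ast}_{22}(1)=2$. The next step is to make the conjugations in $\B=\mathcal{R}^{-1}(\mathcal{I}_1\mathbf{G})\mathcal{R}$ and $\mathcal{I}_1\mathbf{G}=\overline{R}(\mathcal{I}_1\overline{\mathbf{G}})\overline{R}^{-1}$ explicit. Using the canonical transformation $\overline{R}$ of \Cref{eigenvaluesM}, the block formula \cref{eqproof} for $\overline{\mathbf{G}}=\overline{R}^{-1}\mathbf{G}\overline{R}$, and \cref{defI} with $k=1$, a direct computation gives
\begin{align*}
(\mathcal{I}_1\mathbf{G})^{\ast}_{12}(z)&=\tfrac12(z^{-2}-1)\,\mathbf{g}^{\ast}_{12}(z),\\
(\mathcal{I}_1\mathbf{G})^{\ast}_{22}(z)&=\tfrac12(z^{-1}+1)\bigl(\mathbf{g}^{\ast}_{12}(z)+\mathbf{g}^{\ast}_{22}(z)\bigr)-\tfrac12(z^{-2}-1)\,\mathbf{g}^{\ast}_{12}(z).
\end{align*}
Finally, conjugation by $\mathcal{R}=\bigl(\begin{smallmatrix}1&0\\\eta&1\end{smallmatrix}\bigr)$ from \cref{mathcalR} leaves the $(1,2)$ block unchanged and sends the $(2,2)$ block to $(\mathcal{I}_1\mathbf{G})^{\ast}_{22}-\eta\,(\mathcal{I}_1\mathbf{G})^{\ast}_{12}$, so that $\bb^{\ast}_{12}=(\mathcal{I}_1\mathbf{G})^{\ast}_{12}$ and $\bb^{\ast}_{22}=(\mathcal{I}_1\mathbf{G})^{\ast}_{22}-\eta\,(\mathcal{I}_1\mathbf{G})^{\ast}_{12}$.

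It then remains to differentiate at $z=1$. Because $(z^{-2}-1)$ vanishes at $1$ with derivative $-2$ there and $\mathbf{g}^{\ast}_{12}(1)=0$, one obtains $(\mathcal{I}_1\mathbf{G})^{\ast}_{12}(1)=0$ and ${(\mathcal{I}_1\mathbf{G})^{\ast}_{12}}'(1)=-\mathbf{g}^{\ast}_{12}(1)=0$; consequently ${\bb^{\ast}_{12}}'(1)=0$ and every $\eta$-dependent contribution to ${\bb^{\ast}_{22}}'(1)$ cancels, so ${\bb^{\ast}_{22}}'(1)={(\mathcal{I}_1\mathbf{G})^{\ast}_{22}}'(1)$. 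Differentiating the second displayed formula and inserting $\mathbf{g}^{\ast}_{12}(1)=0$, $\mathbf{g}^{\ast}_{22}(1)=2$ yields ${(\mathcal{I}_1\mathbf{G})^{\ast}_{22}}'(1)=-\tfrac12\bigl(\mathbf{g}^{\ast}_{12}(1)+\mathbf{g}^{\ast}_{22}(1)\bigr)+{\mathbf{g}^{\ast}_{12}}'(1)+{\mathbf{g}^{\ast}_{22}}'(1)={\mathbf{g}^{\ast}_{12}}'(1)+{\mathbf{g}^{\ast}_{22}}'(1)-1$, which is exactly the second required identity. Substituting both into the formula for $\varphi_{\C}$ gives $\varphi_{\C}=\varphi_{\A}-\tfrac12$.

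The computations here are elementary; the delicate point is the bookkeeping of the two conjugations (by $\overline{R}$ and by $\mathcal{R}$), together with the observation that — thanks to the Taylor conditions on $\partial_t\A$ from \Cref{masklemma1} — the auxiliary parameter $\eta$ drops out of both ${\bb^{\ast}_{12}}'(1)$ and ${\bb^{\ast}_{22}}'(1)$, so that $\varphi_{\C}$ depends on $\A$ only through $\varphi_{\A}$. As a cross-check one may instead read ${\bc^{\ast}_{11}}'(1)$ and $\bc^{\ast}_{12}(1)$ off the explicit symbol \cref{c1} directly (the factor $(z^{-1}-1)$ in the denominator of $\bc^{\ast}_{12}$ is cancelled by a zero of the numerator at $1$, by the spectral conditions on $\A$) and simplify using \cref{spectral} and $\zeta=\eta+1$; this reproduces $2\varphi_{\C}={\bc^{\ast}_{11}}'(1)-2\bc^{\ast}_{12}(1)=2\varphi_{\A}-1$.
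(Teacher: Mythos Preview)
Your proof is correct and takes a genuinely different route from the paper. The paper's proof works entirely on the output side: it differentiates the explicit expressions for $\bc_{11}^{\ast}(z)$ and $\bc_{12}^{\ast}(z)$ in \cref{c1}, evaluates at $z=1$, and simplifies using the spectral conditions on $\A$ together with the definition $\zeta=1+\ab_{12}^{\ast}(1)/(2-\ab_{22}^{\ast}(1))$; the dependence on $\zeta$ cancels only at the last step. Your argument instead exploits the formula $\varphi_{\mathcal{I}_t\B}=\tfrac12({\bb_{12}^{\ast}}'(1)+{\bb_{22}^{\ast}}'(1)-1)$ from the proof of \Cref{masklemma2} twice---once for $\C=\mathcal{I}_t\B$ and once backwards for $\A=\mathcal{I}_t(\partial_t\A)$---thereby reducing the claim to two derivative identities for the blocks of $\B=\widetilde{\mathcal{I}_1(\partial_t\A)}$. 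This is conceptually cleaner: it bypasses the long symbol formulas \cref{c1} altogether, and makes transparent \emph{why} the auxiliary parameter $\eta$ drops out (namely because both $(\mathcal{I}_1\mathbf{G})_{12}^{\ast}$ and its derivative vanish at $1$, thanks to the Taylor condition $\mathbf{g}_{12}^{\ast}(1)=0$). The paper's approach has the compensating virtue of being a single mechanical computation once \cref{c1} is in hand, and of demonstrating that \cref{c1} is consistent; your final paragraph acknowledges this as a cross-check.
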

In particular, the application of \Cref{alghermite} to interpolatory {\hs}s does not result in interpolatory {\hs}s.

\begin{proof}
Differentiating $\bc_{11}^{\ast}(z)$ and $\bc_{12}^{\ast}(z)$ given in \cref{c1}, and evaluating at $z=1$ we obtain in view of condition (\ref{spectral11mix}) in \Cref{spectralcond}
\begin{align*}
2 \varphi_{\C}={\bc_{11}^{\ast}}'(1)-2\bc_{12}^{\ast}(1)=& \: {\ab_{11}^{\ast}}'(1)-2\ab_{12}^{\ast}(1)
+({\zeta}-1)({\ab_{21}^{\ast}}'(1)-2\ab_{22}^{\ast}(1))\\
&+2({\zeta}-1)+\frac{1}{2}\ab_{12}^{\ast}(1)-{\zeta}-\frac{1}{2}\ab_{22}^{\ast}(1)(1-{\zeta})\\
=&\: 2\varphi_{\A} +\frac{1}{2}(\ab_{12}^{\ast}(1)-\ab_{22}^{\ast}(1))-\frac{1}{2}{\zeta}(2-\ab_{22}^{\ast}(1))\\
=&\: 2(\varphi_{\A} -\frac{1}{2}).\qedhere
\end{align*} 
\end{proof}

From the explicit form of $\C$ we can infer
\begin{corollary}\label{supportlength}
Let $\A$ and $\C$ be masks as in \Cref{alghermite}. If $\A$ has support contained in $[-N_1,N_2]$ with $N_1,N_2 \in \mathbb{N}$, then the support of $\C$ is contained in $[-N_1-5,N_2]$.
\end{corollary}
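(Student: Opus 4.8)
The plan is to read the support bounds directly off the explicit formulas \cref{c1}, which express the four blocks $\bc_{11}^{\ast},\bc_{12}^{\ast},\bc_{21}^{\ast},\bc_{22}^{\ast}$ of $\C^{\ast}(z)$ through the blocks $\ab_{ij}^{\ast}(z)$ of $\A^{\ast}(z)$. The scalars $\eta$ and $\zeta=\eta+1$ do not depend on $z$ and so are irrelevant for counting supports. By hypothesis each $\ab_{ij}^{\ast}(z)$ is a Laurent polynomial with powers of $z$ in $[-N_1,N_2]$. Moreover, in \cref{c1} every Laurent-polynomial coefficient multiplying an $\ab_{ij}^{\ast}$ involves only the powers $z^{-3},\dots,z^{0}$, and the outer prefactors that occur are $(z^{-1}+1)$ and $(z^{-2}-1)$, together with, in the $(1,2)$ block only, the divisor $(z^{-1}-1)$.

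For the upper bound $N_2$: each auxiliary factor has top power at most $z^{0}$, so every product of an $\ab_{ij}^{\ast}$ with such a factor, and with the prefactors $(z^{-1}+1)$ or $(z^{-2}-1)$, has top power at most $z^{N_2}$; this settles the blocks $\bc_{11}^{\ast}$, $\bc_{21}^{\ast}$, $\bc_{22}^{\ast}$. For $\bc_{12}^{\ast}$, denote by $p(z)$ the bracketed numerator in \cref{c1}, so that $\bc_{12}^{\ast}(z)\,(z^{-1}-1)=\tfrac{1}{2}p(z)$. Since $\C=\mathcal{I}_t(\B)$ with $\B=\widetilde{\mathcal{I}_1(\partial_t\A)}\in\tilde{\ell}_t$ is a well-defined mask by \Cref{masklemma2} and \Cref{tilde}, the expression $\bc_{12}^{\ast}$ is a genuine Laurent polynomial; comparing the leading and trailing coefficients of the two sides (using that $z^{-1}-1$ has constant term $-1$) shows that the top power of $\bc_{12}^{\ast}$ equals the top power of $p$, hence is at most $N_2$, while its bottom power is exactly one above the bottom power of $p$.

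For the lower bound $-N_1-5$ I would track the most negative power block by block; the decisive contribution in each case is the $z^{-3}$-term of the polynomial multiplying $\ab_{12}^{\ast}$. In $\bc_{11}^{\ast}$ this produces $z^{-1}\cdot z^{-N_1}\cdot z^{-3}=z^{-N_1-4}$ after the prefactor $(z^{-1}+1)$; in $\bc_{22}^{\ast}$, which has no prefactor, it produces $z^{-N_1-3}$; in $\bc_{12}^{\ast}$ the numerator $p$ reaches $z^{-N_1-3}$, so after the division by $(z^{-1}-1)$ the block reaches only $z^{-N_1-2}$; and in $\bc_{21}^{\ast}$ the same $z^{-3}$-term is further multiplied by the prefactor $(z^{-2}-1)$, producing $z^{-2}\cdot z^{-N_1}\cdot z^{-3}=z^{-N_1-5}$. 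One checks that every remaining term in \cref{c1} stays above $z^{-N_1-5}$, so no further leftward growth occurs, and taking the union over the four blocks gives that the support of $\C$ is contained in $[-N_1-5,N_2]$.

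I do not anticipate a genuine obstacle: the proof is bookkeeping of exponents in \cref{c1}. The only delicate point is the $(1,2)$ block, whose formula involves a formal division by $(z^{-1}-1)$; there one must first use that $\C$ is already known to be a bona fide mask (via \Cref{masklemma2} and \Cref{tilde}) so that the division is exact, and then match leading and trailing coefficients to control its support. An equally short alternative avoids \cref{c1} altogether and follows the support through the steps of \Cref{alghermite}: conjugation by the constant matrices $\overline{R}$ of \Cref{eigenvaluesM} and $\mathcal{R}$ of \Cref{transformtotaylor} leaves the support unchanged; forming the Taylor scheme via \cref{deriv12} extends it to the left by $1$; applying $\mathcal{I}_1$ via \cref{defI} by a further $2$; and applying $\mathcal{I}_t$ via \cref{int12} by a final $2$ — for a total of $5$.
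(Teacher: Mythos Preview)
Your proposal is correct and matches the paper's approach exactly: the paper gives no detailed proof but simply states that the support bound can be inferred from the explicit form of $\C$ in \cref{c1}, and your exponent bookkeeping is precisely what that inference amounts to. Your alternative at the end, tracking the support through the individual steps $\partial_t$, $\mathcal{I}_1$, and $\mathcal{I}_t$ of \Cref{alghermite}, is also correct and yields the same $1+2+2=5$ leftward extension.
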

Therefore \Cref{alghermite} increases the support length at most by 5.
\begin{corollary}\label{a12}
Let $\A$ be a mask satisfying the spectral condition of \cref{spectral} and let its associated Taylor scheme be convergent.
Assume that $\ab_{12}^{\ast}(1)=0$ (i.e.\ $\zeta=1$). Denote by $\C$ the mask obtained via \Cref{alghermite}. Then $\bc_{12}^{\ast}(1)=0$ if and only if ${\ab_{12}^{\ast}}'(1)=0$.
\end{corollary}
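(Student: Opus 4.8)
The plan is to read $\bc_{12}^{\ast}$ off the explicit symbol of $\C$. The hypothesis $\ab_{12}^{\ast}(1)=0$ is exactly the condition $\zeta=1$, so the relevant expression is the one recorded in \cref{c2}:
\begin{equation*}
\bc_{12}^{\ast}(z)=\frac{1}{2}\,\frac{\ab_{12}^{\ast}(z)}{z^{-1}-1}.
\end{equation*}
Since $\C=\mathcal{I}_t(\B)$ for a mask $\B$ satisfying the Taylor conditions (this is $\B=\widetilde{\mathcal{I}_1(\partial_t\A)}\in\tilde{\ell}_t$ by \Cref{tilde}), \Cref{masklemma2} guarantees that $\bc_{12}^{\ast}$ is a genuine Laurent polynomial, so $\bc_{12}^{\ast}(1)$ is a well-defined number even though the displayed quotient is a $0/0$ expression at $z=1$.

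To evaluate it I would use that $\ab_{12}^{\ast}(1)=0$ yields a factorization $\ab_{12}^{\ast}(z)=(z^{-1}-1)\boldsymbol{\kappa}^{\ast}(z)$ with $\boldsymbol{\kappa}^{\ast}$ a Laurent polynomial, since the factor $z^{-1}-1$ has a simple zero at $z=1$. Substituting into the displayed formula gives $\bc_{12}^{\ast}(z)=\tfrac{1}{2}\boldsymbol{\kappa}^{\ast}(z)$, hence $\bc_{12}^{\ast}(1)=\tfrac{1}{2}\boldsymbol{\kappa}^{\ast}(1)$. Differentiating the factorization and evaluating at $z=1$ gives ${\ab_{12}^{\ast}}'(1)=-\boldsymbol{\kappa}^{\ast}(1)$, the term carrying the factor $z^{-1}-1$ dropping out. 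Combining the two, $\bc_{12}^{\ast}(1)=-\tfrac{1}{2}{\ab_{12}^{\ast}}'(1)$, and the equivalence $\bc_{12}^{\ast}(1)=0\iff{\ab_{12}^{\ast}}'(1)=0$ is immediate.

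This is a short computation with no real obstacle; the only point needing care is the removable singularity of the quotient in \cref{c2} at $z=1$, and with it the correct value of the constant $-\tfrac{1}{2}$. As a sanity check I would recompute $\bc_{12}^{\ast}(1)$ directly as a limit using the first-order expansions $\ab_{12}^{\ast}(z)={\ab_{12}^{\ast}}'(1)(z-1)+O((z-1)^2)$ and $z^{-1}-1=-(z-1)/z$, which again produces $\bc_{12}^{\ast}(1)=-\tfrac{1}{2}{\ab_{12}^{\ast}}'(1)$ and confirms the sign.
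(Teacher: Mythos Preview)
Your argument is correct and follows the same route as the paper: both compute $\bc_{12}^{\ast}(1)=-\tfrac{1}{2}{\ab_{12}^{\ast}}'(1)$ directly from the explicit formula for $\bc_{12}^{\ast}(z)$ in the case $\zeta=1$, and the equivalence is immediate. You supply more detail on resolving the removable singularity than the paper does, but the substance is identical.
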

\begin{proof}
From the definition of $\C$ in \cref{c1} it is easy to see that $\bc_{12}^{\ast}(1)=-\tfrac{1}{2}{\ab_{12}^{\ast}}'(1)$. Therefore $\bc_{12}^{\ast}(1)=0$ iff ${\ab_{12}^{\ast}}'(1)=0$.
\end{proof}

Let $r$ be the multiplicity of the root at 1 of $\ab_{12}^{\ast}(z)$. \Cref{a12} implies that $r-1$ iterations of the smoothing procedure stay within the special case of $\zeta=1$.

\begin{example}\label{example}
We consider the Hermite scheme generating $C^1$ piecewise cubic polynomials interpolating the inital data (see \cite{merrien92}). The mask of the scheme is given by
\begin{equation*}
A_{-1}=\left(\begin{array}{r r} \frac{1}{2} & -\frac{1}{8} \\[6pt] \frac{3}{4} & -\frac{1}{8} \end{array}\right), \quad A_{0}=\left(\begin{array}{r r} 1 & 0\\[6pt] 0 & \frac{1}{2} \end{array}\right), \quad
A_{1}=\left(\begin{array}{r r} \frac{1}{2} & \frac{1}{8}\\[6pt] -\frac{3}{4} & -\frac{1}{8} \end{array}\right).
\end{equation*}

It is easy to see that it satisfies the spectral condition of \cref{spectral} with $\varphi_{\A}=0$. 
In \citet{merrien12} it is proved that its Taylor scheme is convergent with limit functions of vanishing first component (and thus the original {\hs} is $HC^1$).

We apply \Cref{alghermite} to this scheme to obtain a new {\hs} of regularity $HC^2$, using the explicit expressions in \cref{c1} and \cref{c2}.
First we compute the symbol:
\begin{equation*}
\A^{\ast}(z)=\left(\begin{array}{c c} \frac{1}{2}(1+z)^2z^{-1} & -\frac{1}{8}(1-z^2)z^{-1} \\[6pt] \frac{3}{4}(1-z^2)z^{-1} & -\frac{1}{8}z^{-1}+\frac{1}{2}- \frac{1}{8}z \end{array}\right).
\end{equation*}
Note that $\ab^{\ast}_{12}(1)=0$ with multiplicity $1$. Therefore we are in the special case $\zeta=1$.

We apply \cref{c2} and obtain the symbol of $\C$:
\begin{equation*}
 \C^{\ast}(z)=\frac{1}{16}
 \left(\begin{array}{c c} (z^{-1}+1)^2(-z^{-2}+z^{-1}+6+2z) & -z-1 \\[6pt] 
 (z^{-2}-1)\Big(z^{-4}-3z^{-3}-3z^{-2}+13z^{-1}+6\Big) &
 z^{-2}-3z^{-1}+3+z \end{array}\right).
\end{equation*}
From \Cref{smoothspectral} we also know that $\C$ satisfies the spectral condition with $\varphi_{\C}=-\tfrac{1}{2}$.
Therefore the {\hs} associated with $\C$ is an $HC^2$ scheme which is not interpolatory. A basic limit function of this scheme is depicted in \Cref{fig:example1}.
Note that the support of $\C$ is $[-6,1]$ and has thus increased from length of $3$ to the length of $8$.

If we want to apply another round of \Cref{alghermite}, we have to use \cref{c1} with $\zeta=\tfrac{14}{15}$.
\end{example}
\begin{figure}
\centering

	\begin{minipage}[b]{.45\textwidth}
	\centering

	\begin{overpic}[scale=0.2]{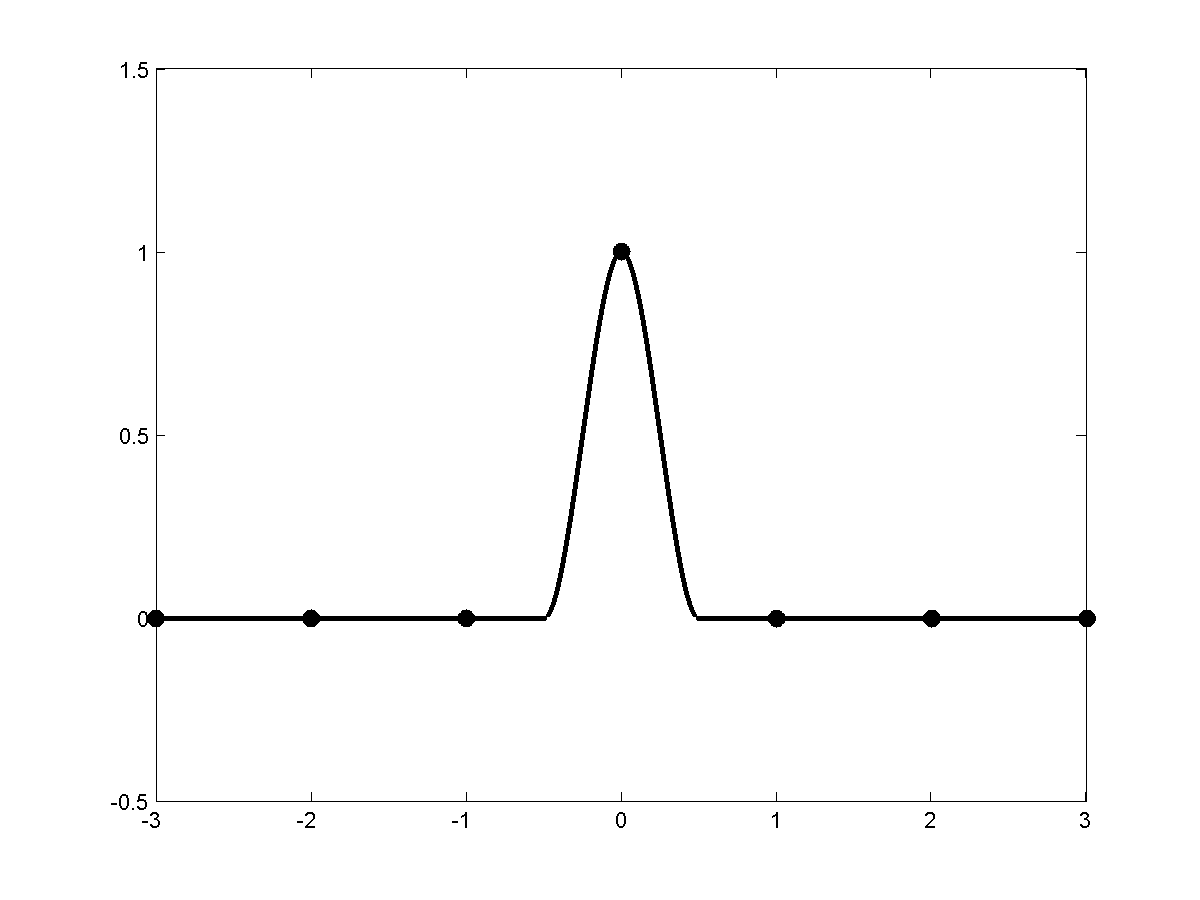}
	 \put(-10,35){$f(t)$}
	 \put(50,-4){$t$} 
	\end{overpic}
	
	\end{minipage}
	\hspace{-1.5cm}
	\begin{minipage}[b]{.45\textwidth}
	\centering

	\begin{overpic}[scale=0.2]{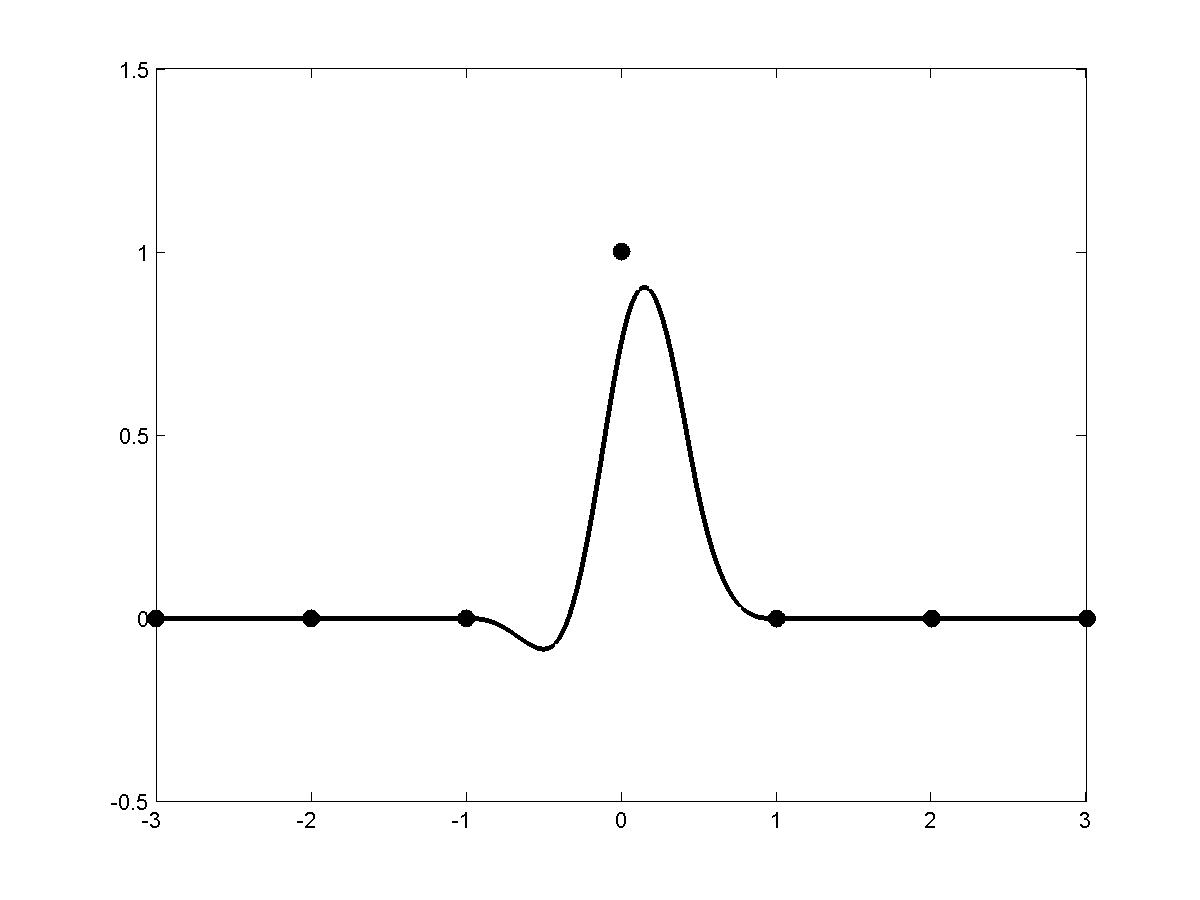}
	 \put(-10,35){$g(t)$}
	 \put(50,-4){$t$} 
	\end{overpic}
	
	\end{minipage}\\[2.5ex]
	
	\begin{minipage}[b]{.45\textwidth}
	\centering

	\begin{overpic}[scale=0.2]{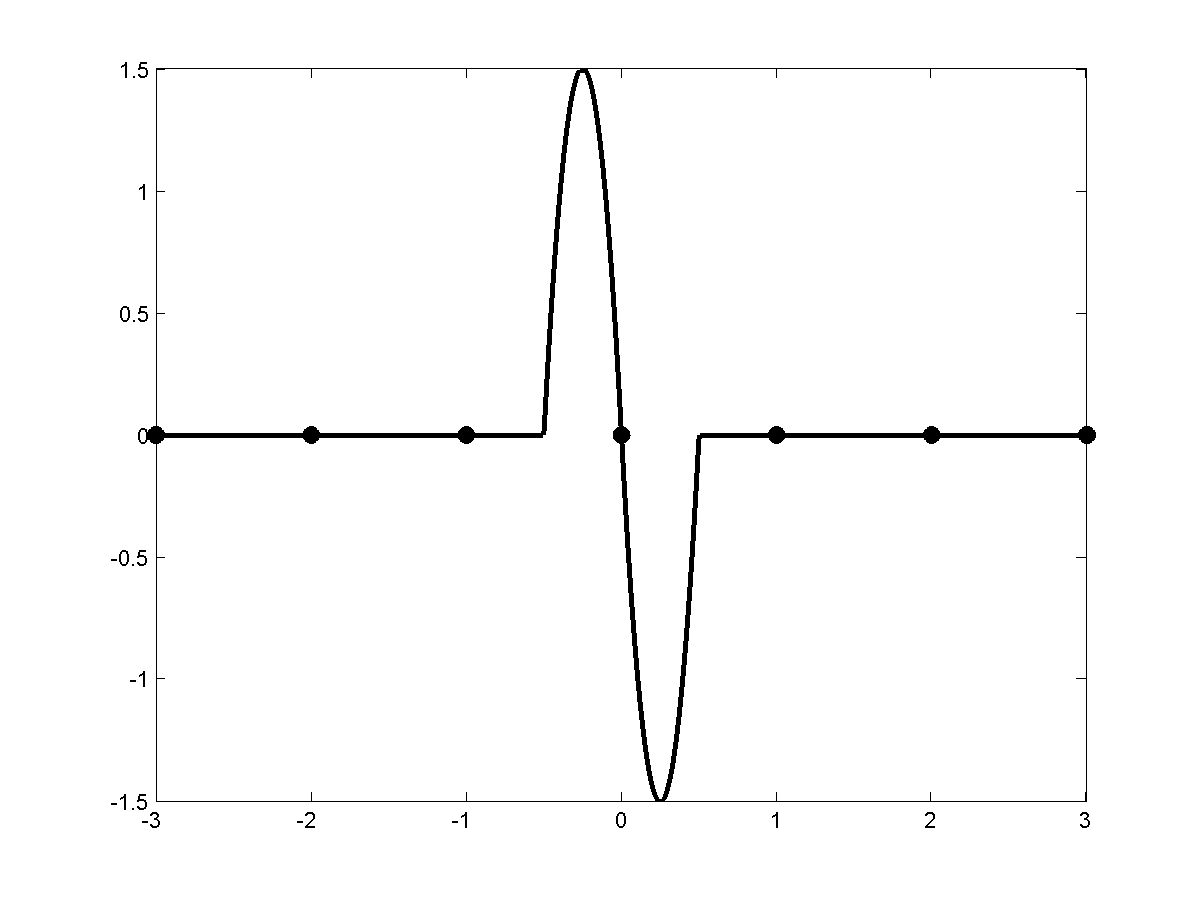}
	 \put(-11,35){$f'(t)$}
	 \put(50,-4){$t$} 
	\end{overpic}
	
	\end{minipage}
	\hspace{-1.5cm}
	\begin{minipage}[b]{.45\textwidth}
	\centering

	\begin{overpic}[scale=0.2]{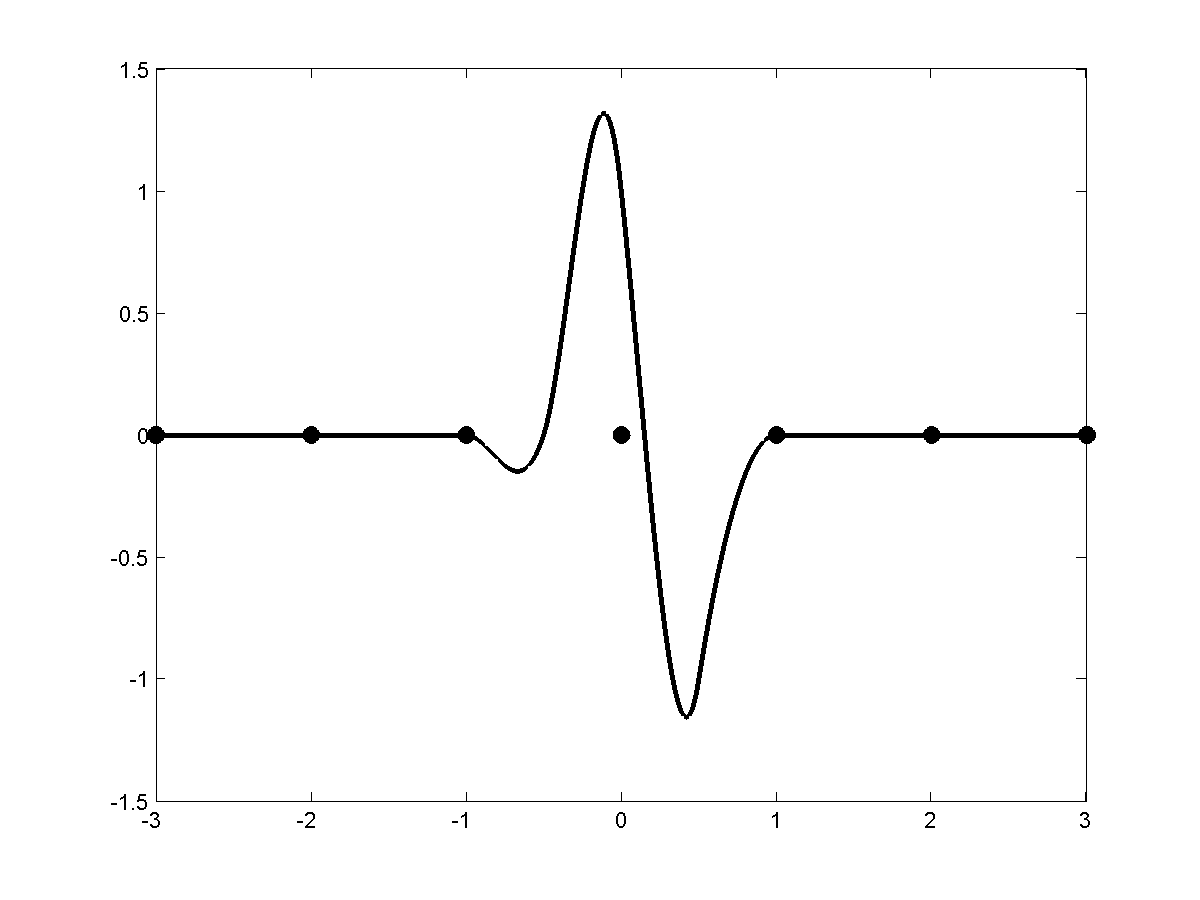}
	 \put(-11,35){$g'(t)$}
	 \put(50,-4){$t$} 
	\end{overpic}
	
	\end{minipage}
	
	\caption{Basic limit functions and their first derivatives of the {\hs}s of \Cref{example}. \emph{First column:} interpolatory $HC^1$ scheme $S_{\A}$ with basic limit function $f$. 
	\emph{Second column:} the smoothed non-interpolatory $HC^2$ scheme $S_{\C}$ with basic limit function $g$.}
	\label{fig:example1}
\end{figure}

\begin{figure*}[!b]
\centering

	\begin{minipage}[b]{.45\textwidth}
	\centering

	\begin{overpic}[scale=0.2]{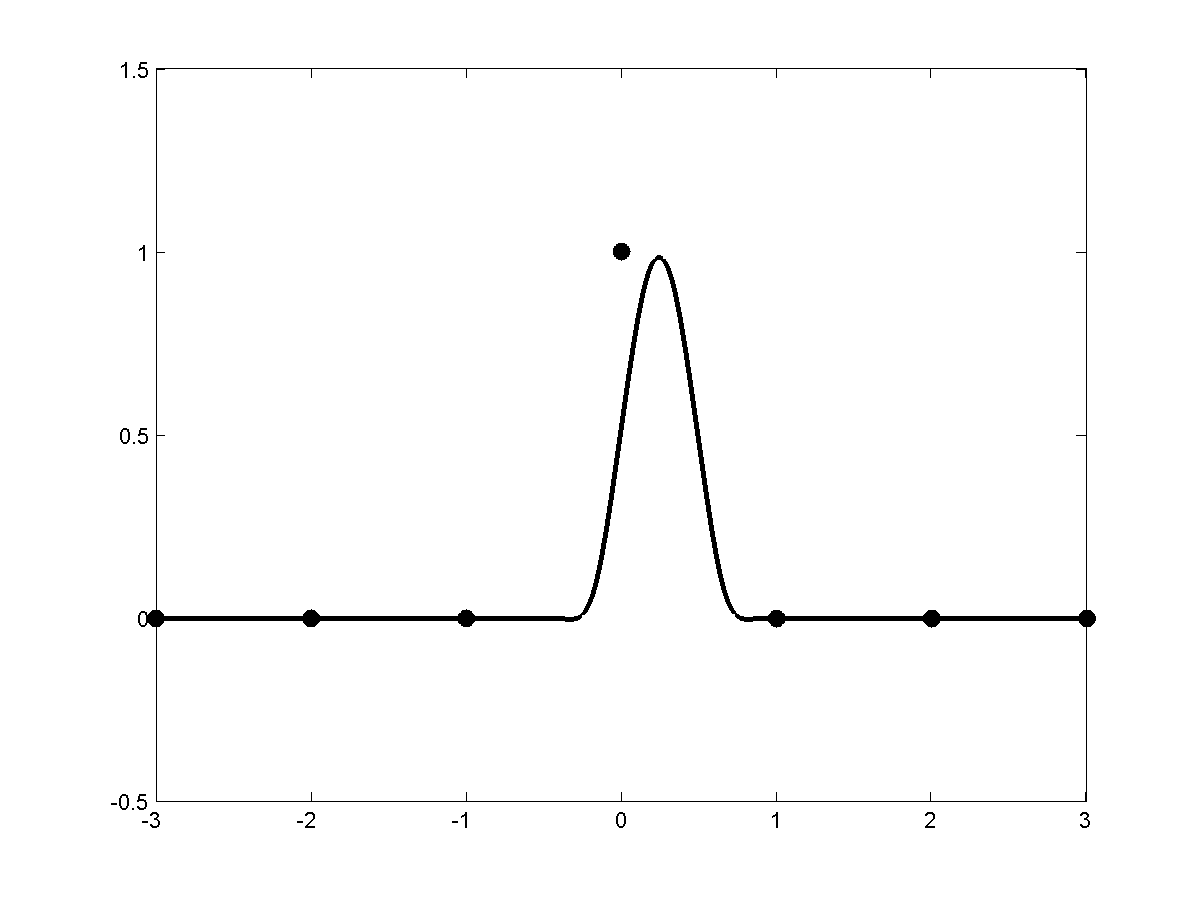}
	 \put(-10,35){$f(t)$}
	 \put(50,-4){$t$} 
	\end{overpic}
	
	\end{minipage}
	\hspace{-1.5cm}
	\begin{minipage}[b]{.45\textwidth}
	\centering

	\begin{overpic}[scale=0.2]{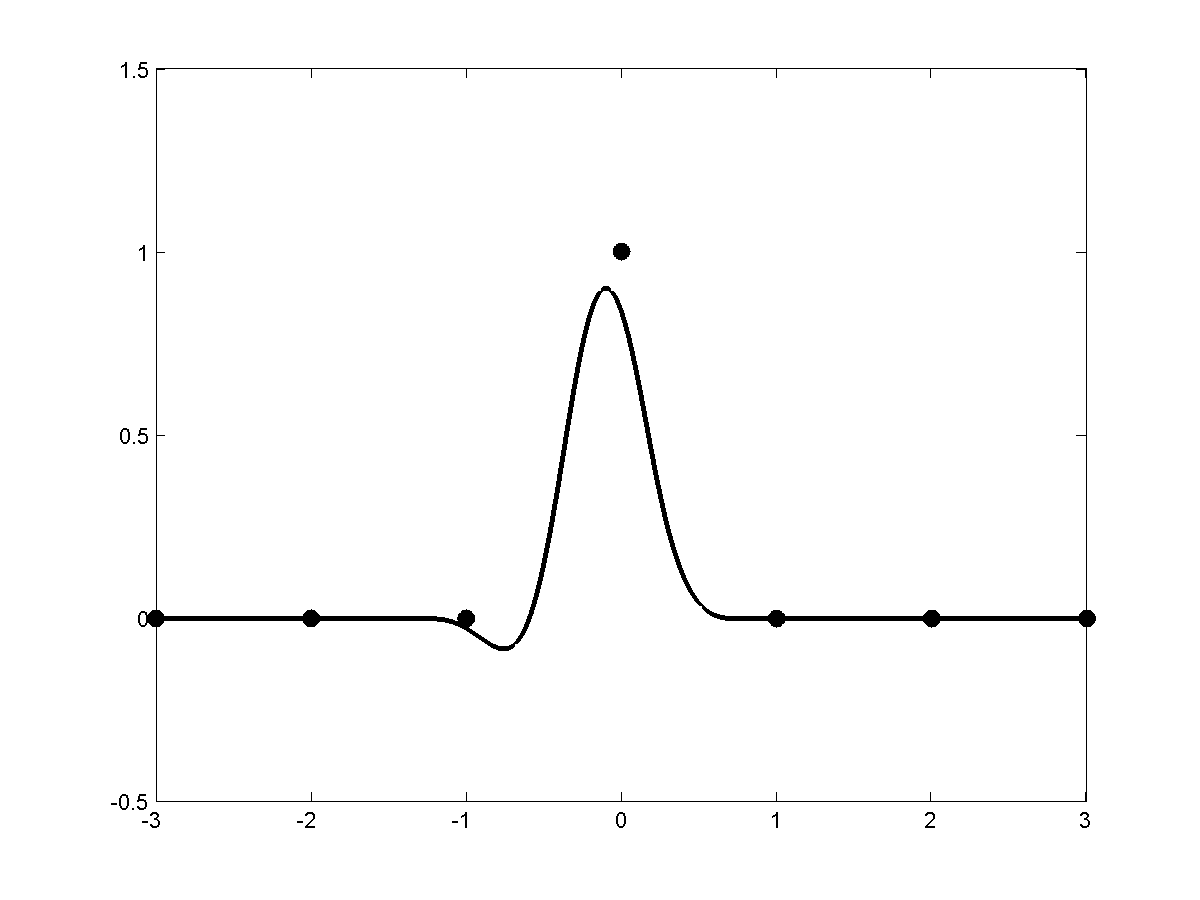}
	 \put(-10,35){$g(t)$}
	 \put(50,-4){$t$} 
	\end{overpic}
	
	\end{minipage}\\[2.5ex]
	
	\begin{minipage}[b]{.45\textwidth}
	\centering

	\begin{overpic}[scale=0.2]{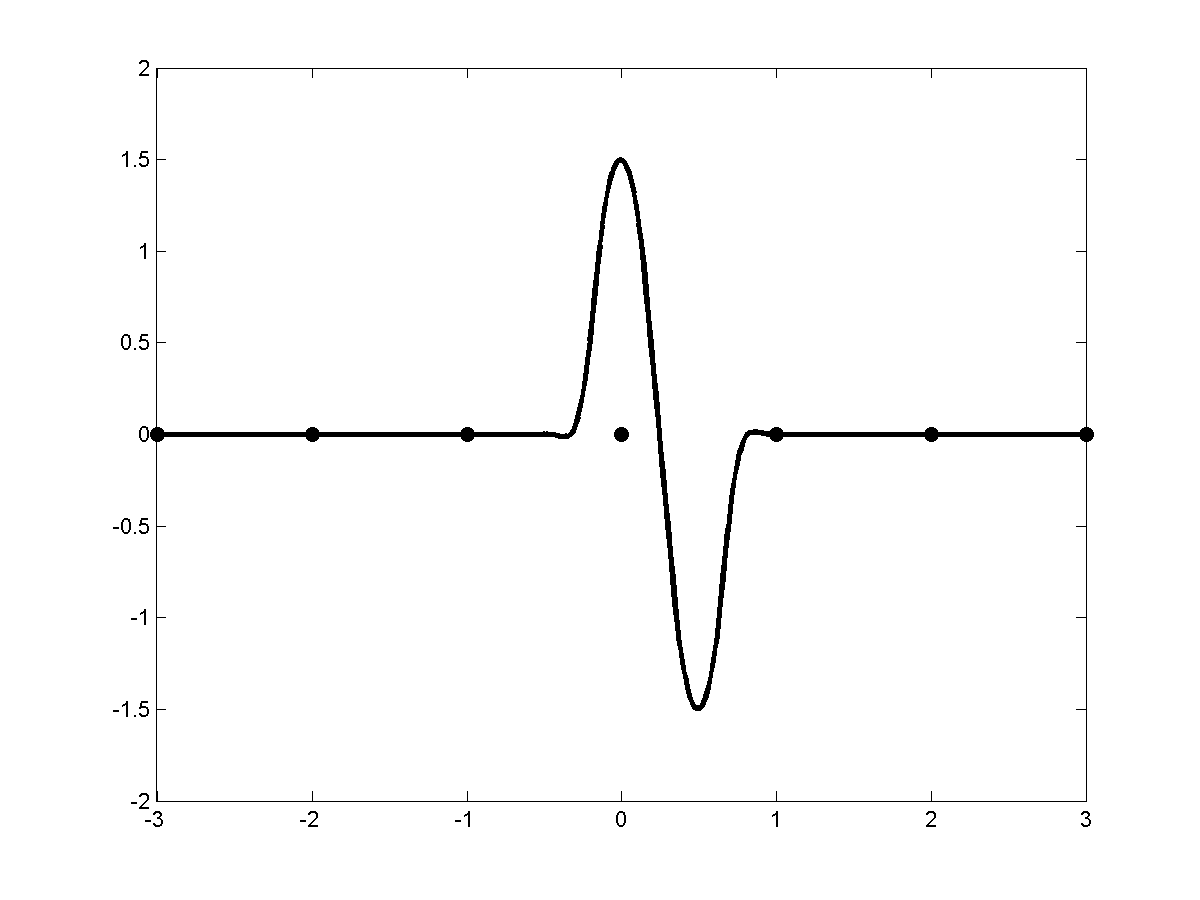}
	 \put(-11,35){$f'(t)$}
	 \put(50,-4){$t$} 
	\end{overpic}
	
	\end{minipage}
	\hspace{-1.5cm}
	\begin{minipage}[b]{.45\textwidth}
	\centering

	\begin{overpic}[scale=0.2]{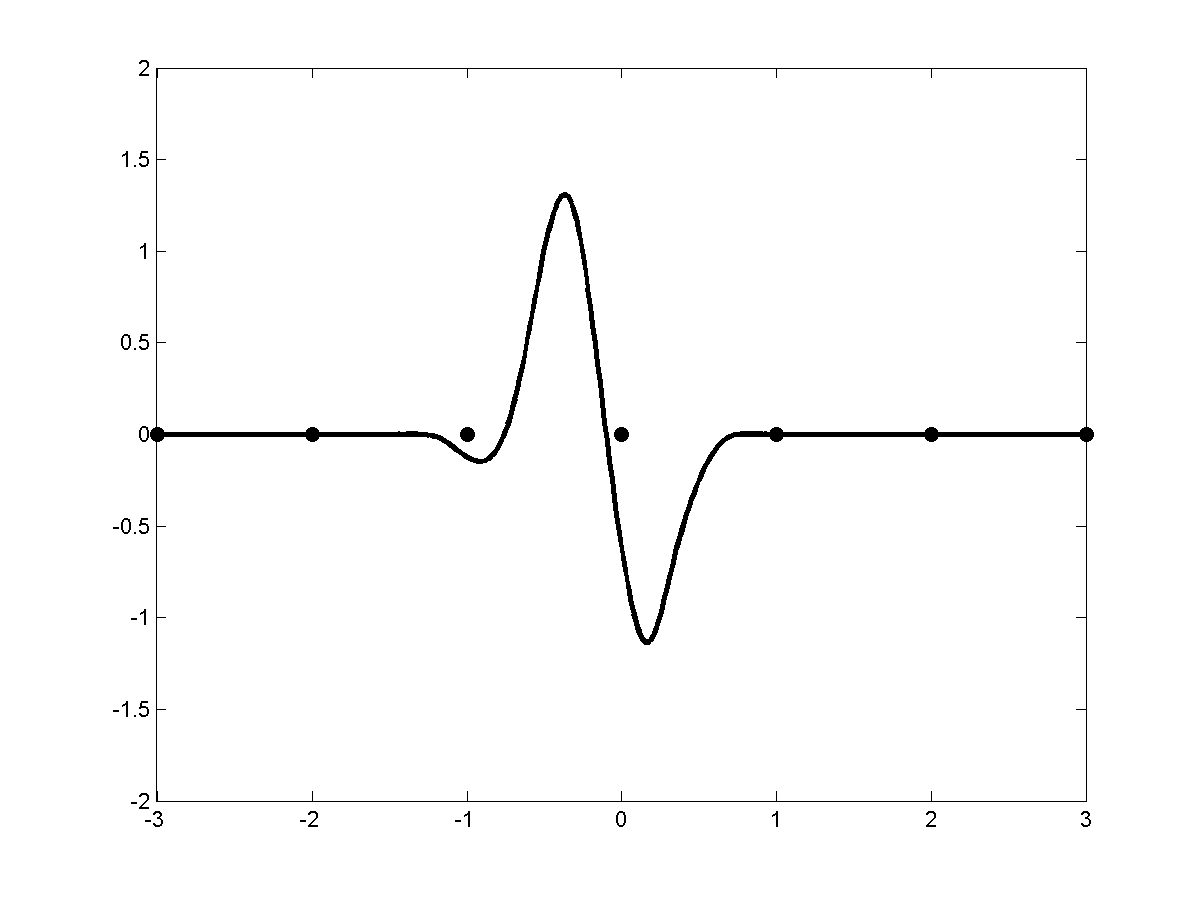}
	 \put(-11,35){$g'(t)$}
	 \put(50,-4){$t$} 
	\end{overpic}
	
	\end{minipage}\\[2.5ex]
	
	\begin{minipage}[b]{.45\textwidth}
	\centering

	\begin{overpic}[scale=0.2]{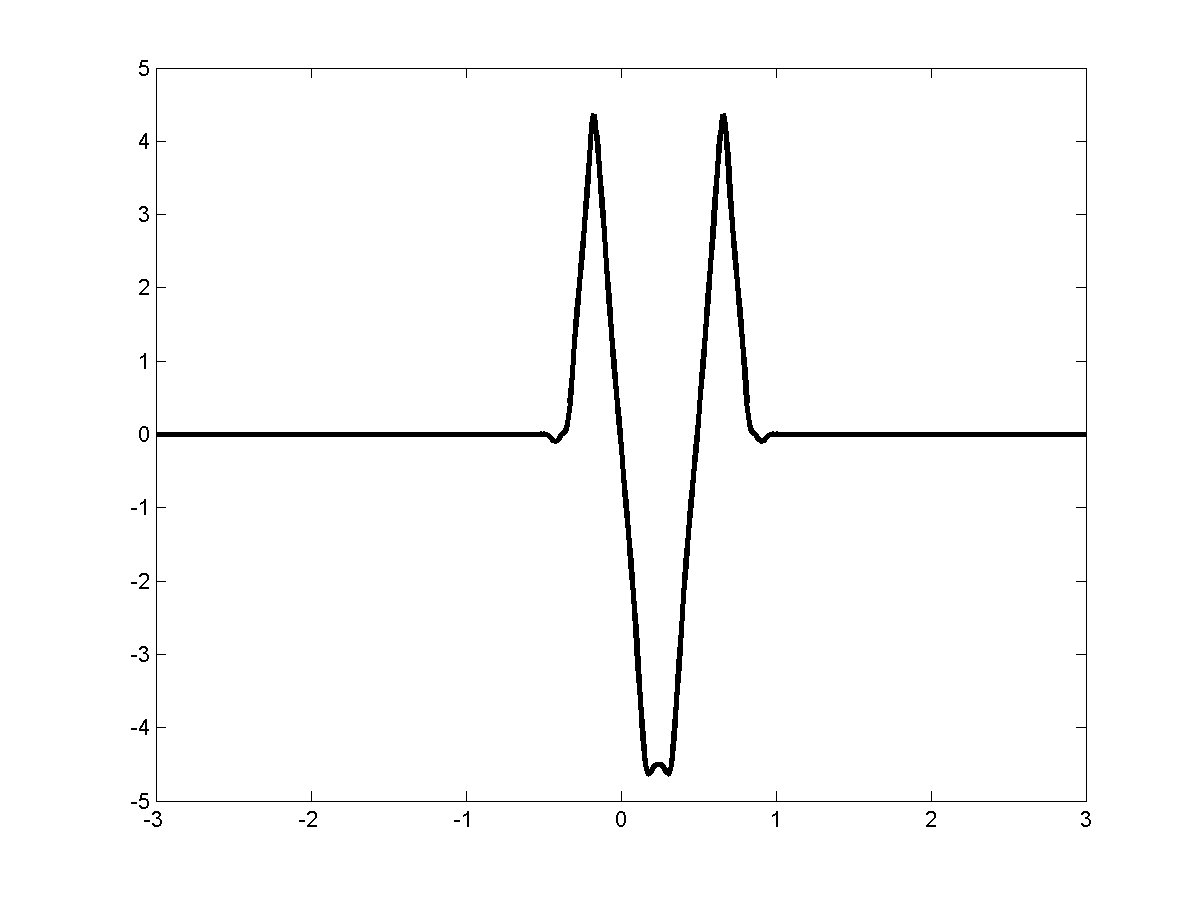}
	 \put(-12,35){$f''(t)$}
	 \put(50,-4){$t$} 
	\end{overpic}
	
	\end{minipage}
	\hspace{-1.5cm}
	\begin{minipage}[b]{.45\textwidth}
	\centering

	\begin{overpic}[scale=0.2]{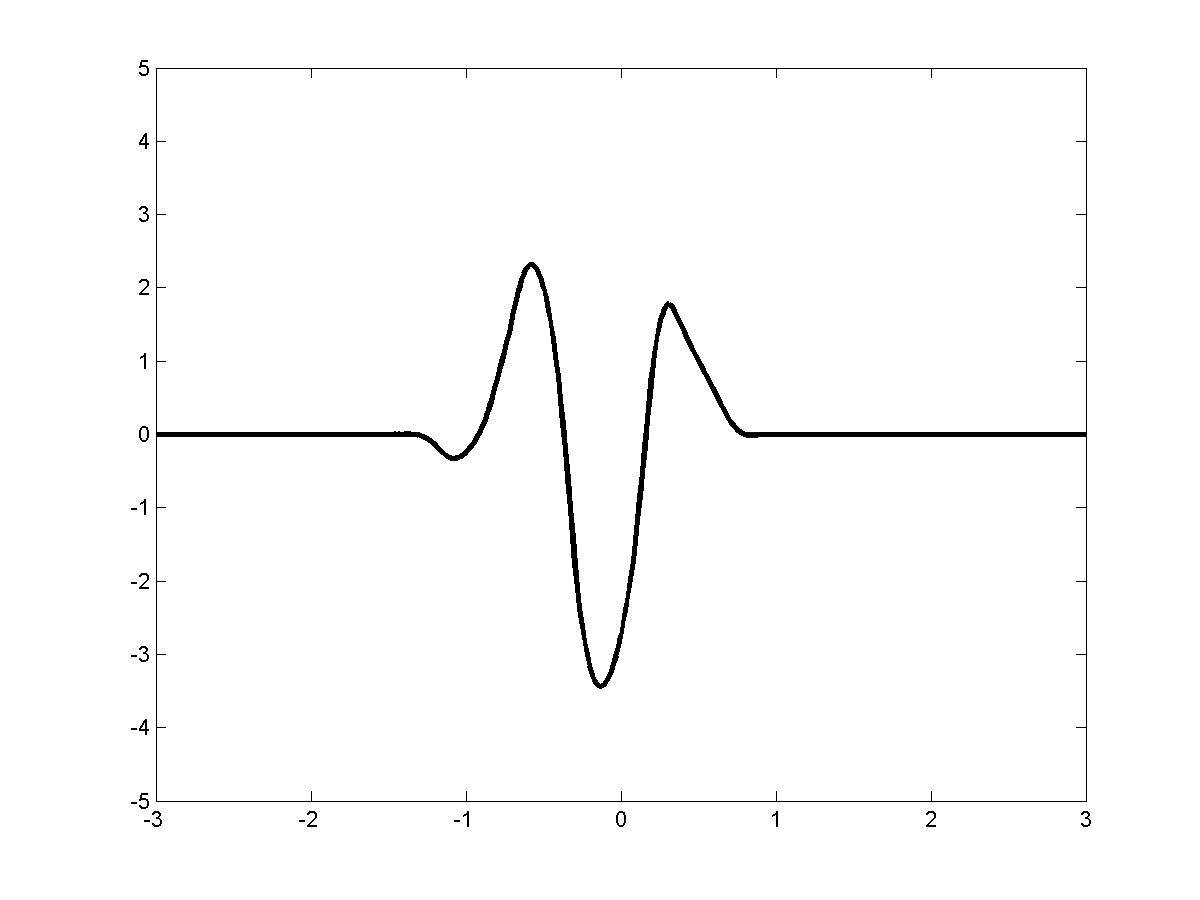}
	 \put(-12,35){$g''(t)$}
	 \put(50,-4){$t$} 
	\end{overpic}
	
	\end{minipage}
	
	\caption{Basic limit functions, their first and second derivatives of the {\hs}s of \Cref{derham}. \emph{First column:} non-interpolatory $HC^2$ scheme $S_{\A}$ with basic limit function $f$.
	\emph{Second column:} smoothed non-interpolatory $HC^3$ scheme $S_{\C}$ with basic limit function $g$.}
	\label{fig:example2}
\end{figure*}
\begin{example}\label{derham}
We consider one of the de Rham-type {\hs}s of \citet{dubuc08} obtained from the scheme of \Cref{example}. Its mask is given by 
\begin{align*}
&A_{-2}=\frac{1}{8}\left(\begin{array}{r r} \frac{5}{4} & -\frac{3}{8} \\[6pt] \frac{9}{2} & -\frac{5}{4} \end{array}\right), \quad
A_{-1}=\frac{1}{8}\left(\begin{array}{r r} \frac{27}{4} & -\frac{9}{8} \\[6pt] \frac{9}{2} & \frac{3}{4} \end{array}\right),\\[0.3cm]
&A_{0}=\frac{1}{8}\left(\begin{array}{r r} \frac{27}{4} & \frac{9}{8} \\[6pt] -\frac{9}{2} & \frac{3}{4} \end{array}\right),\quad
A_{1}=\frac{1}{8}\left(\begin{array}{r r} \frac{5}{4} & \frac{3}{8} \\[6pt] -\frac{9}{2} & -\frac{5}{4} \end{array}\right).
\end{align*}
It is easy to see that it satisfies the spectral condition of \cref{spectral} with $\varphi_{\A}=-\tfrac{1}{2}$. 
In \citet{conti14} it is proved that its Taylor scheme is $C^1$ with limit functions of vanishing first component (and thus the original {\hs} is $HC^2$).

We apply Algorithm \ref{alghermite} to this scheme to obtain a new {\hs} of regularity $HC^3$.
First we compute the symbol:
\begin{equation*}
\A^{\ast}(z)=\frac{1}{16}\left(\begin{array}{c c} \frac{1}{2}(z^{-1}+1)(5z+2z+5z^{-1}) & -\frac{3}{4}(z^{-1}-1)(z+4+z^{-1}) \\[6pt] 9(z^{-2}-1)(z+1) & \frac{1}{2}(z^{-1}+1)(-5z+8-5z^{-1}) \end{array}\right).
\end{equation*}
Note that $\ab^{\ast}_{12}(1)=0$ with multiplicity $1$. Therefore, as in \Cref{example}, we are in the special case $\zeta=1$.
We apply \cref{c2} and obtain the symbol of $\C$:
\begin{align*}
\bc^{\ast}_{11}(z)&=\frac{1}{128}(z^{-1}+1)(-3z^{-4}-9z^{-3}+25z^{-2}+75z^{-1}+36+4z),\\
\bc^{\ast}_{12}(z)&=-\frac{3}{128}(z+4+z^{-1}),\\
\bc^{\ast}_{21}(z)&=\frac{1}{128}(z^{-2}-1)\Big(3z^{-5}-7z^{-4}-37z^{-3}+37z^{-2}+128z^{-1}+20-8z\Big),\\
\bc^{\ast}_{22}(z)&=\frac{1}{128}(3z^{-3}-7z^{-2}-21z^{-1}+21-4z).
\end{align*}
We also know from \Cref{smoothspectral} that $\C$ satisfies the spectral condition with $\varphi_{\C}=-1$. Therefore the {\hs} associated with $\C$ is an $HC^3$ scheme which is not interpolatory.
A basic limit function of this scheme is depicted in \Cref{fig:example2}. Note that the support of $\C$ is $[-7,1]$ and has thus increased from length of $4$ to the length of $9$.

If we want to apply another round of \Cref{alghermite}, we have to use \cref{c1} with $\zeta=\tfrac{41}{44}$.
\end{example}

\section*{Acknowledgements}
Most of this research was done while the first author was with TU Graz. It was supported by the Austrian Science fund (grant numbers W1230 and I705).
The authors thank Costanza Conti, Tomas Sauer and Johannes Wallner for their valuable comments and suggestions.

\bibliographystyle{plainnat}


\begin{thebibliography}{22}
\providecommand{\natexlab}[1]{#1}
\providecommand{\url}[1]{\texttt{#1}}
\expandafter\ifx\csname urlstyle\endcsname\relax
  \providecommand{\doi}[1]{doi: #1}\else
  \providecommand{\doi}{doi: \begingroup \urlstyle{rm}\Url}\fi

\bibitem[Cavaretta et~al.(1991)Cavaretta, Dahmen, and Micchelli]{cavaretta91}
A.~Cavaretta, W.~Dahmen, and C.~Micchelli.
\newblock {Stationary subdivison}.
\newblock \emph{Memoirs of the American Mathematical Society}, 1991.

\bibitem[Charina et~al.(2005)Charina, Conti, and Sauer]{charina05}
M.~Charina, C.~Conti, and T.~Sauer.
\newblock {Regularity of multivariate vector subdivision schemes}.
\newblock \emph{Numerical algorithms}, 39\penalty0 (1-3):\penalty0 97--113,
  2005.

\bibitem[Cohen et~al.(1996)Cohen, Dyn, and Levin]{cohen96}
A.~Cohen, N.~Dyn, and D.~Levin.
\newblock {Stability and inter-dependence of matrix subdivision schemes}.
\newblock In F.~Fontanella, K.~Jetter, and P.J. Laurent, editors,
  \emph{{Advanced {T}opics in {M}ultivariate {A}pproximation, Ser. Approx.
  Decompos. 8}}, pages 33--45. World {S}cientific {P}ublishing, River Edge,
  1996.

\bibitem[Conti et~al.(2014)Conti, Merrien, and Romani]{conti14}
C.~Conti, J.-L. Merrien, and L.~Romani.
\newblock {Dual {H}ermite subdivision schemes of de {R}ham-type}.
\newblock \emph{BIT Numerical Mathematics}, 54:\penalty0 955--977, 2014.

\bibitem[Conti et~al.(2016)Conti, Cotronei, and Sauer]{conti16}
C.~Conti, M.~Cotronei, and T.~Sauer.
\newblock {Factorization of {H}ermite subdivision operators preserving
  exponentials and polynomials}.
\newblock \emph{Advances in {C}omputational {M}athematics}, 42\penalty0
  (5):\penalty0 1055--1079, 2016.

\bibitem[Dubuc(2006)]{dubuc06}
S.~Dubuc.
\newblock {Scalar and {H}ermite subdivision schemes}.
\newblock \emph{Applied and Computational Harmonic Analysis}, 21\penalty0
  (3):\penalty0 376--394, 2006.

\bibitem[Dubuc and Merrien(2005)]{dubuc05}
S.~Dubuc and J.-L. Merrien.
\newblock {{C}onvergent vector and {H}ermite subdivision schemes}.
\newblock \emph{Constructive Approximation}, 23\penalty0 (1):\penalty0 1--22,
  2005.

\bibitem[Dubuc and Merrien(2008)]{dubuc08}
S.~Dubuc and J.-L. Merrien.
\newblock {de {R}ham transform of a {H}ermite subdivision scheme}.
\newblock In M.~Neamtu and L.~L. Schumaker, editors, \emph{{Approximation
  Theory XII}}, pages 121--132, Nashville, TN, 2008. Nashboro Press.

\bibitem[Dubuc and Merrien(2009)]{dubuc09}
S.~Dubuc and J.-L. Merrien.
\newblock {{H}ermite subdivision schemes and {T}aylor polynomials}.
\newblock \emph{Constructive Approximation}, 29\penalty0 (2):\penalty0
  219--245, 2009.

\bibitem[Dyn(1992)]{dyn92}
N.~Dyn.
\newblock {Subdivision schemes in computer-aided geometric design}.
\newblock In \emph{{Advances in Numerical Analysis}}, pages 36--104. Oxford
  University Press, New York, 1992.

\bibitem[Dyn and Levin(1995)]{dyn95}
N.~Dyn and D.~Levin.
\newblock {Analysis of {H}ermite-type subdivision schemes}.
\newblock In C.~K. Chui and L.~L. Schumaker, editors, \emph{{Approximation
  {T}heory VIII. Vol 2: {W}avelets and {M}ultilevel {A}pproximation}}, pages
  117--124, River Edge, NJ, 1995. World Sci.

\bibitem[Dyn and Levin(1999)]{dyn99}
N.~Dyn and D.~Levin.
\newblock {Analysis of {H}ermite-interpolatory subdivision schemes}.
\newblock In S.~Dubuc and G.~Deslauriers, editors, \emph{{Spline {F}unctions
  and the {T}heory of {Wavelets}}}, pages 105--113, Providence, RI, 1999. Amer.
  Math. Soc.

\bibitem[Dyn and Levin(2002)]{dyn02}
N.~Dyn and D.~Levin.
\newblock {Subdivision schemes in geometric modelling}.
\newblock \emph{Acta Numerica}, 11:\penalty0 73--144, 2002.

\bibitem[Dyn et~al.(1991)Dyn, Gregory, and Levin]{dyn91}
N.~Dyn, J.~A. Gregory, and D.~Levin.
\newblock {Analysis of uniform binary subdivision schemes for curve design}.
\newblock \emph{Constructive {A}pproximation}, 7\penalty0 (1):\penalty0
  127--147, 1991.

\bibitem[Guglielmi et~al.(2011)Guglielmi, Manni, and Vitale]{guglielmi11}
N.~Guglielmi, C.~Manni, and D.~Vitale.
\newblock {Convergence analysis of ${C}^2$ {H}ermite interpolatory subdivision
  schemes by explicit joint spectral radius formulas}.
\newblock \emph{Linear {A}lgebra and its {A}pplications}, 434:\penalty0
  884--902, 2011.

\bibitem[Han et~al.(2005)Han, Yu, and Xue]{han05}
B.~Han, T.~Yu, and Y.~Xue.
\newblock {Noninterpolatory {H}ermite subdivision schemes}.
\newblock \emph{Mathematics of {C}omputation}, 74\penalty0 (251):\penalty0
  1345--1367, 2005.

\bibitem[Merrien(1992)]{merrien92}
J.-L. Merrien.
\newblock {A family of {H}ermite interpolants by bisection algorithms}.
\newblock \emph{Numerical Algorithms}, 2\penalty0 (2):\penalty0 187--200, 1992.

\bibitem[Merrien and Sauer(2012)]{merrien12}
J.-L. Merrien and T.~Sauer.
\newblock {From {H}ermite to stationary subdivision schemes in one and several
  variables}.
\newblock \emph{Advances in Computational Mathematics}, 36\penalty0
  (4):\penalty0 547--579, 2012.

\bibitem[Merrien and Sauer(2017)]{merrien16}
J.-L. Merrien and T.~Sauer.
\newblock {Extended {H}ermite subdivision schemes}.
\newblock \emph{Journal of Computational and Applied Mathematics},
  317:\penalty0 343--361, 2017.

\bibitem[Micchelli and Sauer(1998)]{micchelli98}
C.A. Micchelli and T.~Sauer.
\newblock {On vector subdivision}.
\newblock \emph{Mathematische Zeitschrift}, 229:\penalty0 621--674, 1998.

\bibitem[Sauer(2002)]{sauer02}
T.~Sauer.
\newblock {Stationary vector subdivision -- quotient ideals, differences and
  approximation power}.
\newblock \emph{Revista de la Real Academia de Ciencias Exactas, F{\'i}sicas y
  Naturales. Serie A. Matem{\'a}ticas. RACSAM}, 96\penalty0 (2):\penalty0
  257--277, 2002.

\bibitem[Sauer(2003)]{sauer03}
T.~Sauer.
\newblock {How to generate smoother refinable functions from given ones}.
\newblock In \emph{{Modern Developments in Multivariate Approximation}}, volume
  145, pages 279--293. Birkh{\"a}user, Basel, 2003.

\end{thebibliography}

\end{document}